\newcommand{\FF}{\mathcal{F}}
\newcommand{\A}{\mathcal{A}}
\newcommand{\RR}{\mathbb{R}}
\newcommand{\bt}{\bullet}
\newcommand{\Fun}{\mathrm{Fun}}
\newcommand{\gh}{\mathrm{gh}}
\newcommand{\ora}{\overrightarrow}
\newcommand{\ola}{\overleftarrow}
\newcommand{\dd}{\partial}
\newcommand{\hra}{\hookrightarrow}
\newcommand{\be}{\begin{equation}}
\newcommand{\ee}{\end{equation}}
\newcommand{\LL}{\mathcal{L}}
\newcommand{\PP}{\mathcal{P}}
\newcommand{\C}{\mathcal{C}}
\newcommand{\G}{\mathcal{G}}
\newcommand{\V}{\mathcal{V}}
\newcommand{\g}{\mathfrak{g}}
\newcommand{\mr}{\mathrm}
\newcommand{\id}{\mathrm{id}}
\newcommand{\tr}{\mathrm{tr}}
\newcommand{\Str}{\mathrm{Str}}
\newcommand{\Ind}{\mathrm{Ind}}
\newcommand{\im}{\mathrm{im}}
\newcommand{\su}{\mathfrak{su}}
\newcommand{\ra}{\rightarrow}
\newtheorem{proposition}{Proposition}
\newcommand{\ndash}{\nobreakdash-\hspace{0pt}}
\newcommand{\frg}{{\mathfrak{g}}}
\newcommand{\de}{\partial}
\newcommand{\ddd}{{\mathrm{d}}}
\newcommand{\Bl}{\operatorname{Bl}}
\newtheorem{Lem}{Lemma}
\newtheorem{conj}{Conjecture}
\newtheorem{thm}{Theorem}
\theoremstyle{remark}
\newtheorem{Rem}{Remark}
\theoremstyle{definition}
\newtheorem{Def}{Definition}
\newtheorem{Not}{Notation}
\newtheorem{Exa}{Example}
\begin{document}
\title{Remarks on Chern--Simons invariants}
\author{Alberto S. Cattaneo}
\author{Pavel Mn\"ev}
\address{Institut f\"ur Mathematik, Universit\"at Z\"urich--Irchel,
Winterthurerstrasse 190, CH-8057 Z\"urich, Switzerland}
\email{alberto.cattaneo@math.uzh.ch}
\address{Petersburg Department of V. A. Steklov Institute of Mathematics, Fontanka 27, 191023 St. Petersburg, Russia}
\email{pmnev@pdmi.ras.ru}
\begin{abstract}
The perturbative Chern--Simons theory is studied in a finite\ndash dimensional version or assuming that the propagator satisfies certain
properties (as is the case, e.g., with the propagator defined by Axelrod and Singer).
It turns out that the effective BV action
is  a function on cohomology (with shifted degrees)  that solves the quantum master equation and
is defined modulo certain canonical transformations that can be characterized completely. Out of it one obtains invariants.
\end{abstract}
\thanks{This work has been
partially supported by SNF Grant 20-113439, by
the European Union through the FP6 Marie Curie RTN ENIGMA (contract
number MRTN-CT-2004-5652), and by the European Science Foundation
through the MISGAM program. The second author was also supported by RFBR 08-01-00638 and RFBR 09-01-12150-ofi$\_{ }$m grants.}
\maketitle

\setcounter{tocdepth}{3}\tableofcontents
\tableofcontents

\newcommand{\fullref}[1]{\ref{#1} on page~\pageref{#1}}

\section{Introduction}
Since its proposal in Witten's paper \cite{W}, Chern--Simons theory has been a source of fruitful constructions for $3$\ndash manifold invariants.
In the perturbative framework one would like to get the invariants from the Feynman diagrams of the theory. These may be shown to be finite, see \cite{AS}.
However, as in every gauge theory, one has to fix a gauge and then one has to show that the result, the invariant, is independent of the gauge fixing.
In the case when one works around an acyclic connection, this was proved in \cite{AS}, but this assumption rules out the trivial connection.
Gauge-fixing independence for perturbation theory around the trivial connection
of a rational homology sphere
was proved in \cite{A} and, for more general definitions for the propagator,
in \cite{K} and in \cite{BC}. The flexibility in the choice of propagator allows one to show that the invariant is of finite type \cite{KT}.

The case of general $3$\ndash manifolds was not treated in detail, even though the propagators described in \cite{AS,K,BC} are defined in general.
The main point is the presence of zero modes, namely---working around the trivial connection---elements of de~Rham cohomology (with shifted degree) of the manifold
tensor the given Lie algebra.
Out of formal properties of the BV formalism,
it is however clear \cite{L,M0,Co,CF} what the invariant in the general case (of a compact manifold) should be:
a solution of the quantum master equation on the space of zero modes modulo certain BV canonical transformations.
This effective action has already been studied---but only modulo constants---in \cite{Co}, which has
been a source of inspiration for us.


In the first part of this note, we make this precise and mathematically rigorous working with a finite-dimensional version \cite{S} of Chern--Simons theory,
where the algebra of smooth functions on the manifold is replaced by an arbitrary finite-dimensional dg Frobenius algebra (of appropriate degrees).
We are able to produce the solution to the quantum master equation on cohomology and to describe the BV canonical transformations that occur.
Out of this we are able to describe the invariant in the case of cohomology concentrated in degree zero and three (the algebraic version of a rational homology sphere)
and to extract invariants in case the first Betti number is one or, more generally, when the Frobenius algebra is formal.

In the second part we revert to the infinite-dimensional case and show that whatever we did in the finite-dimensional case actually goes through if the propagator
satisfies certain properties. It is good news that the propagator introduced by Axelrod and Singer in \cite{AS} does indeed satisfy them. In particular,
we get an invariant for framed $3$\ndash manifolds as described in Theorem~\fullref{thetheorem}.

The problems with this scheme
are that there is little flexibility in the choice of propagator and that the invariants are defined up to a universal constant that is very difficult to compute.
(Notice that this constant is the same that appears anyway in the case of rational homology spheres in \cite{AS,BC}). For the general case, that is of a propagator
as in \cite{K} or \cite{BC}, we are able to show that all properties but one can easily be achieved. We reduce the last property to Conjecture~\fullref{conj}
which we hope to be able to prove in a forthcoming paper.

During the preparation of this note, we have become aware of independent work by Iacovino \cite{I} on the same topic.

\subsection*{Acknowledgements}
We are grateful to K.~Costello and D.~Sinha 
for insightful discussions.
We also thank C.~Rossi and J.~Stasheff for useful remarks.

\section{Effective BV action}
\label{sec: eff BV action}

Let\footnote{This Section is an adaptation of Section~4.2 from \cite{M}.} $(\FF,\sigma)$ be a finite-dimensional graded vector space endowed with an odd symplectic form $\sigma\in \Lambda^2\FF^*$ of degree -1, which means
$\sigma(u,v)\neq 0 \Rightarrow |u|+|v|=1$ for $u,v\in\FF$. The space of polynomial functions $\Fun(\FF):=S^\bt \FF^*$ is a BV algebra with anti-bracket $\{\bt,\bt\}$ and BV Laplacian\footnote{In the general case, on an odd-symplectic graded manifold, the BV Laplacian $\Delta$ is constructed from the symplectic form and a consistent measure (the ``$SP$-structure''). But here we treat only the linear case, and an odd-symplectic graded vector space $(\FF,\sigma)$ is automatically an $SP$-manifold with Lebesgue measure (the constant Berezinian) $\mu_\FF$.} $\Delta$ generated by the odd symplectic form $\sigma$.

In coordinates: let $\{u_i\}$ be a basis in $\FF$ and $\{x^i\}$ be the dual basis in $\FF^*$. Let us denote
$\sigma_{ij}=\sigma(u_i,u_j)$. Then
\begin{eqnarray*}
\sigma & = & \sum_{i,j} (-1)^{\gh(x^i)}\sigma_{ij}\;\delta x^i \wedge \delta x^j \\
\Delta f & = & \frac{1}{2}\;\sum_{i,j} (-1)^{\gh(x^i)} (\sigma^{-1})^{ij} \frac{\dd}{\dd x^i}\frac{\dd}{\dd x^j}\; f \\
\{f,g\} & = & f \left(\sum_{i,j} (\sigma^{-1})^{ij} \frac{\ola\dd}{\dd x^i} \frac{\ora\dd}{\dd x^j}\right) g
\end{eqnarray*}
In our convention $\sigma_{ij}=-\sigma_{ji}$ and we call grading on $\Fun(\FF)$ the ``ghost number'': $\gh(x^i)=-|u_i|$.

Suppose $(\FF',\sigma')$ is another odd symplectic vector space and $\iota: \FF'\hra\FF$ is an embedding (injective linear map of degree 0) that agrees with the odd symplectic structure:
$$\sigma'(u',v')=\sigma(\iota(u'),\iota(v'))$$
for $u',v'\in \FF'$. Then $\FF$ can be represented as
\be \FF=\iota(\FF')\oplus \FF'' \label{F=F'+F''}\ee
where $\FF'':=\iota(\FF')^\perp$ is the symplectic complement of the image of $\iota$ in $\FF$ with respect to $\sigma$. Hence the algebra of functions on $\FF$ factors
$$\Fun(\FF)\cong \Fun(\FF')\otimes \Fun(\FF'')$$
(the isomorphism depends on the embedding $\iota$). Since (\ref{F=F'+F''}) is an (orthogonal) decomposition of odd symplectic vector spaces, the BV Laplacian also splits:
\be \Delta=\Delta'+\Delta''\label{Delta=Delta'+Delta''}\ee
Here $\Delta'$ is the BV Laplacian on $\Fun(\FF')$, associated to the odd symplectic form $\sigma'$, and $\Delta''$ is the BV Laplacian on $\FF''$ associated to the restricted odd symplectic form $\sigma|_{\FF''}$.

Let $S\in\Fun(\FF)[[\hbar]]$ be a solution to the quantum master equation (QME):
$$\Delta e^{S/\hbar}=0\quad \Leftrightarrow\quad \frac{1}{2}\{S,S\}+\hbar\; \Delta S=0$$
(the BV action on $\FF$).
Let also $\LL\subset\FF''$ be a Lagrangian subspace in $\FF''$. We define the effective (or ``induced'') BV action $S'\in\Fun(\FF')[[\hbar]]$ by the fiber BV integral
\be e^{S'(x')/\hbar}=\int_\LL e^{S(\iota(x')+x'')/\hbar}\mu_\LL \label{BV integral}\ee
where $\mu_\LL$ is the Lebesgue measure on $\LL$ and we use  the notation $x=\sum_i u_i x^i$ for the canonical element of $\FF\otimes\Fun(\FF)$, corresponding to the identity map $\FF\ra\FF$, and analogously for $x'\in \FF'\otimes \Fun(\FF')$, $x''\in \FF''\otimes \Fun(\FF'')$

\begin{proposition} \label{prop1} The Effective action $S'$ defined by (\ref{BV integral}) satisfies the QME on $\FF'$, i.e.\ $$\Delta' e^{S'/\hbar}=0$$
\end{proposition}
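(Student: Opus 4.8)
The plan is to show that the fiber BV integral is compatible with the factorized BV Laplacian, reducing the claim to the statement that the integral over a Lagrangian of a $\Delta''$-closed integrand is $\Delta'$-closed. Concretely, I would first invoke the splitting \eqref{Delta=Delta'+Delta''} to write $\Delta e^{S/\hbar}=(\Delta'+\Delta'')e^{S/\hbar}=0$. The key observation is that $\Delta'$ acts only on the $\FF'$ variables and hence commutes with the fiber integration over $\LL\subset\FF''$; therefore, applying $\Delta'$ to the defining equation \eqref{BV integral} I can move $\Delta'$ inside the integral:
\begin{equation*}
\Delta' e^{S'/\hbar}=\int_\LL \left(\Delta' e^{S/\hbar}\right)\Big|_{x=\iota(x')+x''}\mu_\LL.
\end{equation*}
Using $\Delta'e^{S/\hbar}=-\Delta''e^{S/\hbar}$ from the QME on $\FF$, this becomes $-\int_\LL \left(\Delta'' e^{S/\hbar}\right)\mu_\LL$.

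The heart of the argument is then to prove that the integral over a Lagrangian $\LL$ of $\Delta''$ applied to any (suitably decaying) function vanishes:
\begin{equation*}
\int_\LL \left(\Delta'' g\right)\mu_\LL=0.
\end{equation*}
I would establish this by choosing Darboux coordinates on $\FF''$ adapted to the Lagrangian decomposition $\FF''=\LL\oplus\LL^*$ (a Lagrangian always admits a complementary Lagrangian, giving such a polarization). In these coordinates $\Delta''$ is the sum of second mixed derivatives pairing a coordinate along $\LL$ with its conjugate along $\LL^*$, so that $\Delta''g$ restricted to $\LL$ (where all the $\LL^*$-coordinates are set to zero) is a total derivative in the $\LL$-directions, up to the conjugate-variable differentiations which become boundary terms. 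More precisely, each term of $\Delta''$ is $\pm\,\partial/\partial x^a\,\partial/\partial \xi_a$ with $x^a$ along $\LL$ and $\xi_a$ along $\LL^*$; integrating $\partial/\partial x^a$ over the full line (or using compact support / Gaussian decay) kills it by the fundamental theorem of calculus, while the $\partial/\partial\xi_a$ factor, evaluated on $\LL$, is harmless since integration is only in the $x^a$. This is the standard ``integral of a BV Laplacian over a Lagrangian vanishes'' lemma, the key technical input.

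The main obstacle, and the point requiring care, is the \emph{convergence and boundary-term} issue: the Lebesgue integral over $\LL$ must actually converge, and the integration-by-parts step that discards boundary terms must be legitimate. In the polynomial-times-exponential setting this hinges on $e^{S/\hbar}$ providing Gaussian damping transverse to the stationary locus, so that the Berezinian integral is well defined and all boundary contributions at infinity vanish; in the purely algebraic (formal or finite-dimensional) reading one instead interprets the fiber integral as a formal Gaussian integral, in which case the vanishing of $\int_\LL \Delta'' g\,\mu_\LL$ is an algebraic identity about contractions that can be verified directly in Darboux coordinates. I would therefore be explicit about which interpretation is in force, and I expect the bulk of the write-up to consist in justifying the exchange of $\Delta'$ with $\int_\LL$ and the Lagrangian-integral lemma, with the QME on $\FF$ entering only through the single substitution $\Delta' e^{S/\hbar}=-\Delta'' e^{S/\hbar}$.
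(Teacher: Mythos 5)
Your proposal is correct and follows essentially the same route as the paper: both rest on the splitting $\Delta=\Delta'+\Delta''$, the commutation of $\Delta'$ with the fiber integral, the QME for $S$, and the BV--Stokes lemma that $\int_\LL \Delta''(\cdot)\,\mu_\LL=0$ (which the paper cites as known, while you sketch its Darboux-coordinate proof). The only cosmetic difference is that you apply the QME first to trade $\Delta'$ for $-\Delta''$, whereas the paper writes $\Delta'=\Delta-\Delta''$ and lets the two resulting integrals vanish separately.
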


\begin{proof} This is a direct consequence of (\ref{Delta=Delta'+Delta''}) and the BV-Stokes theorem (integrals over Lagrangian submanifolds of BV coboundaries vanish):
\begin{multline*}
\Delta' e^{S'(x')/\hbar}= \Delta'\int_\LL e^{S(\iota(x')+x'')/\hbar}\mu_\LL=\int_\LL (\Delta-\Delta'') e^{S(\iota(x')+x'')/\hbar}\mu_\LL=\\
=\int_\LL \Delta e^{S(\iota(x')+x'')/\hbar}\mu_\LL - \int_\LL \Delta'' e^{S(\iota(x')+x'')/\hbar}\mu_\LL=0
\end{multline*}
In the last line the first term vanishes due to QME for $S$, while the second term is zero due to the BV-Stokes theorem.
\end{proof}

The BV integral (\ref{BV integral}) depends on a choice of embedding $\iota:\FF'\hra\FF$ and Lagrangian subspace $\LL\subset \FF''$ (notice that $\FF''=\iota(\FF')^\perp$ itself depends on $\iota$). We call the pair $(\iota,\LL)$ the ``induction data'' in this setting. We are interested in the dependence of the effective BV action $S'$ on deformations of induction data.

Recall that a generic small Lagrangian deformation $\LL_\Psi$ of a Lagrangian submanifold $\LL\subset \FF''$ is given by a gauge fixing fermion $\Psi\in\Fun(\LL)$ of ghost number -1. If $(q^a,p_a)$ are Darboux coordinates on $\FF''$ such that $\LL$ is given by $p=0$, then $\LL_\Psi$ is
$$\LL_\Psi=\left\{(q,p)\; :\; p_a=-\frac{\dd}{\dd q^a}\Psi\right\}\subset\FF''$$
In our case we are interested in linear Lagrangian subspaces and thus only allow quadratic gauge fixing fermions.

A general small deformation of induction data $(\iota,\LL)$ from $(\FF,\sigma)$ to $(\FF',\sigma')$ can be written as
\be \iota\mapsto \iota+\delta\iota_{\perp}+\delta\iota_{||} \quad , \quad \LL\mapsto (\id_\FF-\iota\circ (\delta\iota_{\perp})^T )\LL_\Psi \label{deformation of (iota,L)}\ee
where the ``perpendicular part'' of the deformation of the embedding $\delta\iota_\perp: \FF'\ra \FF$ is a linear map of degree 0 satisfying $\delta\iota_\perp(\FF')\subset \FF''$, while the ``parallel part'' is of the form $\delta\iota_{||}=\iota\circ \delta\phi_{||}$ with $\delta\phi_{||}:\FF'\ra \FF'$ a linear map of degree 0 satisfying $(\delta\phi_{||})^T=-\delta\phi_{||}$ (i.e.\ $\delta\phi_{||}$ lies in the Lie algebra of the group of linear symplectomorphisms $\delta\phi_{||}\in \mr{sp}(\FF',\sigma')$). We use superscript $T$ to denote transposition w.r.t. symplectic structure. Thus it is reasonable to classify small deformations into the three following types:
\begin{itemize}
\item Type I: small Lagrangian deformations of Lagrangian subspace $\LL\mapsto \LL_\Psi$ leaving the embedding $\iota$ intact.
\item Type II: small perpendicular deformations of the embedding $\iota\mapsto \iota+\delta\iota_\perp$ accompanied by
an associated
deformation\footnote{Formula (\ref{type II def}) is explained as follows: one can express the deformation of the embedding as $\iota+\delta\iota_\perp= (\id_\FF+\delta\Phi)\circ \iota$, where $\delta\Phi=\delta\iota_\perp\circ \iota^T-\iota\circ \delta\iota_\perp^T\in \mr{sp}(\FF,\sigma)$, so that $\id_\FF+\delta\Phi$ is an infinitesimal symplectomorphism of the space $\FF$, accounting for the deformation of $\iota$. Then (\ref{type II def}) just means $\LL\mapsto(\id_\FF+\delta\Phi)\circ\LL$.} of the Lagrangian subspace
\be \LL\mapsto (\id_\FF-\iota\circ (\delta\iota_{\perp})^T )\LL \label{type II def}\ee (this is necessary since here we deform the splitting (\ref{F=F'+F''}) and $\LL$ is supposed to be a subspace of the deformed $\FF''$).
\item Type III: small parallel deformations of the embedding $$\iota\mapsto \iota+\delta\iota_{||} = \iota\circ (\id_{\FF'}+\delta\phi_{||})$$ leaving $\LL$ intact.
\end{itemize}
A general small deformation $(\ref{deformation of (iota,L)})$ is the sum of deformations of types I, II, III.

We call the following transformation of the action
\be S\mapsto \tilde{S}=S+\{S,R\}+\hbar \Delta R \label{can transf for S}\ee
(regarded in first order in $R$) the infinitesimal canonical transformation of the action with (infinitesimal) generator $R\in\Fun(\FF)[[\hbar]]$ of ghost number -1. Equivalently
\be e^{S/\hbar}\mapsto e^{\tilde{S}/\hbar}=e^{S/\hbar}+\Delta(e^{S/\hbar}R) \label{can transf, exp form}\ee
The transformed action also solves the QME (in first order in $R$). Infinitesimal canonical transformations generate the equivalence relation on solutions of QME.

\begin{Lem} \label{lm1}
If the action $S$ is changed by an infinitesimal canonical transformation
$S\mapsto S+\{S,R\}+\hbar\Delta R$, then the effective BV action is also changed by an infinitesimal canonical transformation \be S'\mapsto S'+\{S',R'\}'+\hbar \Delta' R' \label{S' transf, lm1}\ee with generator $R'\in\Fun(\FF')[[\hbar]]$ given by fiber BV integral
\be R'=e^{-S'/\hbar}\int_\LL e^{S/\hbar} R\; \mu_\LL \label{R'}\ee
\end{Lem}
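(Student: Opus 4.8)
The plan is to work throughout with the \emph{exponential} form of the canonical transformation (\ref{can transf, exp form}) rather than with $S$ directly, since the fiber BV integral (\ref{BV integral}) is an integral of $e^{S/\hbar}$ and the transformation law is linear at that level, avoiding the nonlinear term $\{S,R\}$. First I would apply (\ref{can transf, exp form}) to obtain the transformed action $\tilde S$ with $e^{\tilde S/\hbar}=e^{S/\hbar}+\Delta(e^{S/\hbar}R)$, and then substitute this into the defining integral (\ref{BV integral}) to compute the transformed effective action $\tilde S'$:
\begin{equation*}
e^{\tilde S'/\hbar}=\int_\LL e^{\tilde S/\hbar}\,\mu_\LL=\int_\LL e^{S/\hbar}\,\mu_\LL+\int_\LL \Delta(e^{S/\hbar}R)\,\mu_\LL .
\end{equation*}
The first summand is $e^{S'/\hbar}$ by definition, so the entire task reduces to identifying the second summand with $\Delta'(e^{S'/\hbar}R')$ for $R'$ as in (\ref{R'}).

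The key manipulation is to split the Laplacian under the integral via (\ref{Delta=Delta'+Delta''}), $\Delta=\Delta'+\Delta''$. The $\Delta''$ contribution vanishes by the BV--Stokes theorem exactly as in the proof of Proposition~\ref{prop1}: $\Delta''$ is the BV Laplacian on $\FF''$ and $\LL\subset\FF''$ is Lagrangian, so $\int_\LL \Delta''(e^{S/\hbar}R)\,\mu_\LL=0$. For the surviving $\Delta'$ contribution I would use the factorization $\Fun(\FF)\cong\Fun(\FF')\otimes\Fun(\FF'')$ induced by (\ref{F=F'+F''}): under it $\Delta'$ acts only on the $\FF'$ factor (differentiating in $x'$), whereas the fiber integral is taken over $x''\in\LL\subset\FF''$. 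Hence $\Delta'$ commutes with $\int_\LL(\cdots)\,\mu_\LL$ and may be pulled outside,
\begin{equation*}
\int_\LL \Delta'(e^{S/\hbar}R)\,\mu_\LL=\Delta'\int_\LL e^{S/\hbar}R\,\mu_\LL .
\end{equation*}

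Defining $R'$ by (\ref{R'}), equivalently $\int_\LL e^{S/\hbar}R\,\mu_\LL=e^{S'/\hbar}R'$, the right-hand side becomes $\Delta'(e^{S'/\hbar}R')$. Collecting the two summands gives $e^{\tilde S'/\hbar}=e^{S'/\hbar}+\Delta'(e^{S'/\hbar}R')$, which by comparison with (\ref{can transf, exp form}) applied on $(\FF',\sigma')$ is precisely the exponential form of the infinitesimal canonical transformation of $S'$ with generator $R'$; expanding to first order recovers (\ref{S' transf, lm1}). I expect the only delicate points to be the commutation of $\Delta'$ past the fiber integration and the attendant sign bookkeeping: both rest entirely on the orthogonal splitting (\ref{F=F'+F''}) making $\Delta'$ act as $\Delta'\otimes\id$ on the tensor product, so that it genuinely passes through an integral over a subspace of $\FF''$. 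Everything is to be read to first order in $R$ (equivalently in $R'$), consistent with the statement, so no higher terms of the exponential need be tracked.
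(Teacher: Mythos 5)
Your proposal is correct and follows exactly the paper's own argument: substitute the exponential form (\ref{can transf, exp form}) into the fiber BV integral, split $\Delta=\Delta'+\Delta''$, kill the $\Delta''$ term by the BV--Stokes theorem, and pull $\Delta'$ outside the integral to identify the result with $\Delta'(e^{S'/\hbar}R')$ for $R'$ as in (\ref{R'}). Your explicit remark that $\Delta'$ commutes with the fiber integration because it acts only on the $\Fun(\FF')$ factor of the tensor decomposition is exactly the (implicit) justification underlying the corresponding step in the paper.
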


\begin{proof}
This follows straightforwardly from (\ref{Delta=Delta'+Delta''}), the BV-Stokes theorem and the exponential form of canonical transformations (\ref{can transf, exp form}):
\begin{multline*} e^{S'/\hbar}\mapsto e^{\tilde{S}'/\hbar} =\int_\LL (e^{S/\hbar}+\Delta (e^{S/\hbar}R)) \mu_\LL =\\
=e^{S'/\hbar}+\Delta' \int_\LL e^{S/\hbar}R\;\mu_\LL+ \underbrace{\int_\LL \Delta''(e^{S/\hbar}R)\;\mu_\LL}_{0}=
e^{S'/\hbar}+\Delta' (e^{S'/\hbar}R')
\end{multline*}
with $R'$ given by (\ref{R'}).
\end{proof}


\begin{proposition}
\label{prop2}
Under general infinitesimal deformation of the induction data $(\iota,\LL)$ as in (\ref{deformation of (iota,L)}) the effective BV action $S'$ is transformed canonically (up to constant shift):
\be S'\mapsto S'+\{S',R'\}'+\hbar \Delta' (R'-R'_{III}) \label{S' transf in prop2}\ee with generator
\be R'=\underbrace{\sigma'(x',\delta\phi_{||}\;x')}_{R'_{III}}+ e^{-S'/\hbar} \int_\LL e^{S/\hbar} (\Psi+\sigma(x'',\delta\iota_\perp\; x')) \;\mu_\LL \label{R' in prop2}\ee
\end{proposition}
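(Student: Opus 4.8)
The plan is to use that a general infinitesimal deformation (\ref{deformation of (iota,L)}) is a sum of the three elementary types I, II, III, to compute the first\ndash order change of $S'$ for each type separately, and to add the resulting generators. Type III is the easiest: leaving $\LL$ and the $x''$\ndash dependence of the integrand untouched, the deformation $\iota\mapsto\iota\circ(\id_{\FF'}+\delta\phi_{||})$ merely precomposes $S'$ with the linear symplectomorphism $\id_{\FF'}+\delta\phi_{||}$ of $(\FF',\sigma')$, so that $S'\mapsto S'+\{S',R'_{III}\}'$ with $R'_{III}=\sigma'(x',\delta\phi_{||}x')$ the quadratic Hamiltonian generating $\delta\phi_{||}$ (one checks $\{x'^i,R'_{III}\}'=(\delta\phi_{||}x')^i$ using $\delta\phi_{||}\in\mr{sp}(\FF',\sigma')$). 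As $R'_{III}$ is quadratic, $\Delta'R'_{III}$ is a constant; this is exactly why $R'_{III}$ enters the bracket but is subtracted inside the $\hbar\Delta'$ term of (\ref{S' transf in prop2}), the leftover $\hbar\Delta'R'_{III}$ being the advertised constant shift.

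For Type I I would vary the fiber integral directly. Writing $\LL_\Psi$ in Darboux coordinates $(q,p)$ on $\FF''$ as $p_a=-\partial\Psi/\partial q^a$ and expanding $\int_{\LL_\Psi}e^{S/\hbar}\mu$ to first order in $\Psi$, an integration by parts in $q$ turns the variation into $\pm\int_\LL\Psi\,(\Delta''e^{S/\hbar})\,\mu_\LL$. The QME for $S$ gives $\Delta''e^{S/\hbar}=-\Delta'e^{S/\hbar}$, and since $\Delta'$ differentiates only the $\FF'$\ndash variables (while $\Psi$ is a function on $\LL\subset\FF''$) it commutes with multiplication by $\Psi$ and with the fiber integral, yielding $\delta_I e^{S'/\hbar}=\Delta'(e^{S'/\hbar}R'_I)$ with $R'_I=e^{-S'/\hbar}\int_\LL e^{S/\hbar}\Psi\,\mu_\LL$. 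Comparison with the exponential form (\ref{can transf, exp form}) identifies this as a canonical transformation with generator $R'_I$, reproducing the $\Psi$\ndash term of (\ref{R' in prop2}).

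Type II is the substantive case. Here I would exploit the footnote's observation that the deformation $(\iota,\LL)\mapsto(\iota+\delta\iota_\perp,(\id_\FF-\iota\circ\delta\iota_\perp^T)\LL)$ is precisely the action of the single symplectomorphism $g=\id_\FF+\delta\Phi$ of $(\FF,\sigma)$, with $\delta\Phi=\delta\iota_\perp\circ\iota^T-\iota\circ\delta\iota_\perp^T\in\mr{sp}(\FF,\sigma)$, on both pieces of the induction data; the relations $\iota^T\iota=\id_{\FF'}$, $\iota^T|_{\FF''}=0$ and $\delta\iota_\perp^T\iota=0$ (all immediate from $\FF''=\iota(\FF')^\perp$) confirm $g\iota=\iota+\delta\iota_\perp$ and $g\LL=(\id_\FF-\iota\delta\iota_\perp^T)\LL$. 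Changing the fiber variable by $x''\mapsto g\,x''$ transfers the deformation off the induction data and onto the action, replacing $S$ by $g^*S=S+\{S,R_{II}\}$ with $R_{II}=\tfrac12\sigma(x,\delta\Phi x)$, while the accompanying transformation of the Lebesgue measure on the deforming Lagrangian supplies the remaining quantum piece, so that in total $S$ undergoes the genuine canonical transformation $S+\{S,R_{II}\}+\hbar\Delta R_{II}$. Lemma~\ref{lm1} then applies verbatim and gives $R'_{II}=e^{-S'/\hbar}\int_\LL e^{S/\hbar}R_{II}\,\mu_\LL$. It remains to simplify $R_{II}$ on the locus $x=\iota x'+x''$: the same orthogonality relations give $\delta\Phi\,x=\delta\iota_\perp x'-\iota\,\delta\iota_\perp^T x''$, whereupon the four terms of $\tfrac12\sigma(x,\delta\Phi x)$ collapse (two vanish by orthogonality of the splitting (\ref{F=F'+F''}), the other two coincide) to $\sigma(x'',\delta\iota_\perp x')$, which is the second term of (\ref{R' in prop2}).

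Adding the three contributions yields $R'=R'_{III}+R'_I+R'_{II}$, with the bracket $\{S',R'\}'$ present for all three but the Laplacian term $\hbar\Delta'$ present only for $R'_I$ and $R'_{II}$, i.e.\ $\hbar\Delta'(R'-R'_{III})$; this is (\ref{S' transf in prop2}) up to the constant $\hbar\Delta'R'_{III}$. The step I expect to be the main obstacle is precisely the measure bookkeeping in Type II: because a symplectomorphism of an odd\ndash symplectic space need not preserve the Berezinian (indeed $\Str\delta\Phi\neq0$ in general), one must verify carefully that the Jacobian of $x''\mapsto g\,x''$ along the Lagrangian fiber delivers exactly the quantum correction $\hbar\Delta R_{II}$ needed to promote the reparametrization $g^*S$ to a full BV canonical transformation, so that Lemma~\ref{lm1} is applicable; equivalently, one must track the half\ndash density defining $\mu_\LL$ through the deformation of the splitting (\ref{F=F'+F''}). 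Everything else reduces to the manipulations with $\Delta=\Delta'+\Delta''$, the QME, and the BV\ndash Stokes theorem already employed in Proposition~\ref{prop1} and Lemma~\ref{lm1}.
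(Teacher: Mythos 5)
Your argument is correct, and its engine is the same as the paper's: realize the deformation as a symplectomorphism and then run the splitting $\Delta=\Delta'+\Delta''$, the QME, and BV--Stokes through Lemma~\ref{lm1}. The difference is organizational. The paper treats all three types in one stroke: it writes the whole deformation as $\iota\mapsto(\id_\FF+\delta\Phi)\circ\iota$, $\LL\mapsto(\id_\FF+\delta\Phi)\LL$ with the single generator $\delta\Phi=\{\bt,\Psi\}+(\delta\iota_\perp\circ\iota^T-\iota\circ\delta\iota_\perp^T)+\delta\iota_{||}\circ\iota^T\in\mr{sp}(\FF,\sigma)$, takes $R=\frac12\sigma(x,\delta\Phi\,x)=R_I+R_{II}+R_{III}$, observes that $\Delta''R=0$ (so $\mu_\LL$ is preserved) and that $\Delta R=\Delta R_{III}=\frac12\Str_{\FF'}\delta\phi_{||}$, and applies Lemma~\ref{lm1} once; the supertrace constant is what becomes the $-\hbar\Delta'R'_{III}$ shift. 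You instead split into the three types: Type~I by a bare-hands Darboux variation of $\LL$ (integration by parts plus $\Delta''e^{S/\hbar}=-\Delta'e^{S/\hbar}$, which is an equivalent, more elementary substitute for the paper's $\{\bt,\Psi\}$ piece of $\delta\Phi$), Type~II by the ambient symplectomorphism plus Lemma~\ref{lm1}, and Type~III by pure precomposition on $\FF'$. Your route makes transparent why the constant shift is attached precisely to $R'_{III}$: a Type~III deformation is a pull-back on the base $\FF'$, hence produces a bracket but no $\hbar\Delta'$ term; the paper obtains the same fact by bookkeeping the supertrace.

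The step you flag as the main obstacle is in fact vacuous, and this is worth recording: for Type~II, $\delta\Phi=\delta\iota_\perp\circ\iota^T-\iota\circ\delta\iota_\perp^T$ is off-diagonal with respect to the splitting $\FF=\iota(\FF')\oplus\FF''$ (it exchanges the two summands), so $g\LL$ is the graph over $\LL$ of a purely transverse linear map and the Berezinian of $x''\mapsto g\,x''$ is exactly $1$; correspondingly $R_{II}$ restricts to the cross term $\sigma(x'',\delta\iota_\perp x')$, which pairs $\FF''$-coordinates against $\FF'$-coordinates, whence $\Delta' R_{II}=\Delta'' R_{II}=0$ and therefore $\hbar\Delta R_{II}=0$. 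No quantum correction needs to be ``supplied'' by the measure: the reparametrized action $S+\{S,R_{II}\}$ already \emph{is} the full canonical transformation, and Lemma~\ref{lm1} applies as you intend. This is precisely the paper's observation that $\Delta''R=0$ implies $(\id_\FF+\delta\Phi)^*\mu_\LL=\mu_\LL$ and that only $R_{III}$ contributes to $\Delta R$. (A final cosmetic point: the factor-of-two question for $R'_{III}$ --- your convention that $\sigma'(x',\delta\phi_{||}x')$ generates $\delta\phi_{||}$, versus Euler's relation for quadratic Hamiltonians giving $\frac12\sigma'(x',\delta\phi_{||}x')$ --- is an ambiguity already internal to the paper, whose proof produces the $\frac12$ while its statement of the Proposition does not.)
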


\begin{proof}
The deformation (\ref{deformation of (iota,L)}) can be represented in the form
$$\iota\mapsto (\id_\FF+\delta\Phi)\circ \iota\quad, \quad \LL\mapsto (\id_\FF+\delta\Phi)\LL$$
where $\delta\Phi\in\mr{sp}(\FF,\sigma)$ is the infinitesimal symplectomorphism given by
$$\delta\Phi=\{\bt,\Psi\}+(\delta\iota_\perp\circ\iota^T-\iota\circ\delta\iota_\perp^T)+\delta\iota_{||}\circ \iota^T$$
Here $\{\bt,\Psi\}=\exp(\{\bt,\Psi\})-\id_\FF$ is understood as the (infinitesimal) flow generated by Hamiltonian vector field $\{\bt,\Psi\}$ in unit time. The pull-back $(\id_\FF+\delta\Phi)^*: \Fun(\FF)\ra\Fun(\FF)$ acts on functions as canonical transformation
\be f\mapsto f+\{f,R\} \label{can trasf for f}\ee with generator given by
$$R=\frac{1}{2}\sigma(x,\delta\Phi\; x)=\underbrace{\Psi}_{R_I}+\underbrace{\sigma(x,(\delta\iota_\perp\circ\iota^T) x)}_{R_{II}}+\underbrace{\frac{1}{2}\sigma(x,(\delta\iota_{||}\circ \iota^T)x)}_{R_{III}}$$
It is important to note that only the third term (the effect of type III deformation) contributes to  $\Delta R$:
$$\Delta R=\Delta' R= \Delta R_{III}=\frac{1}{2}\;\mr{Str}_{\FF'} \delta\phi_{||}$$
($\Str_{\FF'}$ denotes supertrace over $\FF'$) and $\Delta'' R=0$. The latter implies that $(\id_\FF+\delta\Phi)^* \mu_\LL=\mu_\LL$.
Now we can compute the transformation of the effective BV action $S'$ due to infinitesimal change of induction data:
\begin{multline}
e^{S'/\hbar}\mapsto e^{\tilde{S}'/\hbar}=\int_\LL (\id_\FF+\delta\Phi)^* e^{S/\hbar}\mu_\LL=\\
=e^{-\mr{Str}_{\FF'}\delta\phi_{||}}\int_\LL e^{(S+\{S,R\}+\hbar\Delta R)/\hbar}\mu_\LL=e^{-\mr{Str}_{\FF'}\delta\phi_{||}}e^{(S'+\{S',R'\}'+\hbar\Delta' R')/\hbar}
\label{prop2 eq1}
\end{multline}
Here we used Lemma \ref{lm1} and the generator is given by (\ref{R'}):
$$R'=e^{-S'/\hbar}\int_\LL e^{S/\hbar} R \mu_\LL=e^{-S'/\hbar}\int_\LL e^{S/\hbar} (\Psi+\sigma(x,(\delta\iota_\perp\circ\iota^T) x)+\frac{1}{2}\sigma(x,(\delta\iota_{||}\circ \iota^T)x)) \mu_\LL$$
which yields (\ref{R' in prop2}). At last note that $\frac{1}{2}\;\Str_{\FF'} \delta\phi_{||}=\Delta' \frac{1}{2}\sigma'(x',\delta\phi_{||} x')=\Delta' R'_{III}$ which explains the constant shift in (\ref{S' transf in prop2}).
\end{proof}

\begin{Rem} If we were treating BV actions as log-half-densities (meaning that $e^{S/\hbar}$ is a half-density), we would write an honest canonical transformation (\ref{S' transf, lm1}) instead of (\ref{S' transf in prop2}), with no $-\hbar\Delta'R'_{III}$ shift. This is because the pull-back $(\id_\FF+\delta\Phi)^*$ would then be acting on $S$ by transformation (\ref{can transf for S}) instead of (\ref{can trasf for f}). But in practice one works with effective BV actions defined by a normalized BV integral: if the initial BV action is of the form $S=S_0+S_\mr{int}$ with $S_0$ quadratic in fields (the ``free part'' of BV action) and $S_\mr{int}$ the ``interaction part'', one usually defines
$$e^{S'(x')/\hbar}=\frac{1}{N}\int_\LL e^{S(\iota(x')+x'')/\hbar}\mu_\LL$$
with the normalization factor $N=\int_\LL e^{S_0(x'')/\hbar}\mu_\LL$. The effective action defined via a normalized BV integral is indeed a function rather than log-half-density, and transforms according to (\ref{S' transf in prop2}) under change of induction data.
\end{Rem}

\section{Toy model for effective Chern--Simons theory on zero-modes: effective BV action on cohomology of dg Frobenius algebras}\label{s:toy}
By a dg Frobenius algebra $(\C,d,m,\pi)$ we mean a unital differential graded commutative algebra $\C$ with differential $d: \C^\bt\ra \C^{\bt+1}$ and (super-commutative, associative) product $m: S^2 \C\ra\C$, endowed in addition with a non-degenerate pairing $\pi: S^2\C\ra\RR$ of degree $-k$ (which means $\pi(a,b)\neq 0\Rightarrow |a|+|b|=k$; and $k$ is some fixed integer), satisfying the following consistency conditions:
\begin{eqnarray}
\pi(da,b)+(-1)^{|a|}\pi(a,db)=0 \label{d self-adj} \\
\pi(a,m(b,c))=\pi(m(a,b),c) \label{cyclicity of m}
\end{eqnarray}
for $a,b,c\in \C$.

By a dg Frobenius--Lie\footnote{Alternatively one could use the term ``cyclic dg Lie algebra''.} algebra $(\A,d,l,\pi)$
we mean a differential graded Lie algebra $\A$ with differential $d:\A^\bt\ra\A^{\bt+1}$ and Lie bracket $l: \wedge^2 \A\ra\A$, endowed with a non-degenerate pairing $\pi: S^2\A\ra\RR$ of degree $-k$ satisfying the conditions
\begin{eqnarray}\pi(A,l(B,C))=\pi(l(A,B),C) \label{cyclicity of l}\\
\pi(dA,B)+(-1)^{|A|}\pi(A,dB)=0
\end{eqnarray}
for $A,B,C\in \A$.

If $\g$ is an (ordinary) Lie algebra with non-degenerate ad-invariant inner product $\pi_\g: S^2\g\ra \RR$ (one calls such Lie algebras ``quadratic'') and $(\C,d,m,\pi)$ is a dg Frobenius algebra, then $(\g\otimes\C,d,l,\pi_{\g\otimes\C})$ is a dg Frobenius--Lie algebra. Here one defines $d(X\otimes a):=X\otimes da$,\quad $l(X\otimes a,Y\otimes b):= [X,Y]\otimes m(a,b)$,\quad $\pi_{\g\otimes\C}(X\otimes a, Y\otimes b):=\pi_\g(X,Y)\; \pi(a,b)$. We will usually write $\pi$ instead of $\pi_{\g\otimes\C}$.

\begin{Exa}
If $M$ is a closed (compact, without boundary) orientable smooth manifold of dimension $D$, then de Rham algebra $\Omega^\bt(M)$ is a dg Frobenius algebra with de Rham differential, wedge product and Poincare pairing $\int_M \bt\wedge \bt$ of degree $-D$. If $\g$ is a finite-dimensional Lie algebra with invariant non-degenerate trace $\tr$, then the algebra $\Omega^\bt(M,\g)=\g\otimes \Omega^\bt(M)$ of $\g$-valued differential forms on $M$ is a dg Frobenius--Lie algebra with pairing $\tr\int_M \bt\wedge\bt$ of degree $-D$.
\end{Exa}


\subsection{Abstract Chern--Simons action from a dg Frobenius algebra}
Let $(\C,d,m,\pi)$ be a finite dimensional non-negatively graded dg Frobenius algebra with pairing $\pi$ of degree -3
$$\C=\C^0\oplus \C^1[-1] \oplus \C^2[-2]\oplus \C^3[-3]$$
We denote by $B_i=\dim H^i (\C)$ the Betti numbers. Due to non-degeneracy of $\pi$, there is an isomorphism $\pi_\flat: \C^\bt\cong (\C^{3-\bt})^*$ (the Poincare duality). Induced pairing on cohomology is also automatically non-degenerate, and so Poincare duality descends to cohomology: $\pi_\flat: H^\bt(\C)\cong (H^{3-\bt}(\C))^*$. Hence $B_0=B_3$, $B_1=B_2$. We will suppose in addition that $B_0=B_3=1$ (so that $\C$ models the de Rham algebra of a connected manifold).

Let $\g$ be a finite dimensional quadratic Lie algebra of coefficients and let $(\g\otimes \C,d,l,\pi)$ be the corresponding dg Frobenius--Lie structure on $\g\otimes\C$. Then we can construct an odd symplectic space of BV fields
$$\FF=\g\otimes\C[1]$$
with odd symplectic structure of degree -1 given by
\be\sigma(sA,sB)=(-1)^{|A|}\pi(A,B)\label{sigma from pi}\ee
Here $s:\g\otimes\C\ra \g\otimes\C[1]$ is the suspension map.
Let us also introduce the notation $\omega$ for the canonical element of $(\g\otimes\C)\otimes\Fun(\FF)$ corresponding to the desuspension map $s^{-1}:\FF\ra \g\otimes\C$. If $\{e_I\}$ is a basis in $\C$ and $\{T_a\}$ is an orthonormal basis in $\g$, then we can write
$$\omega=\sum_{I,a}T_a e_I\; \omega^{Ia}$$
where $\{\omega^{Ia}\}$ are the corresponding coordinates on $\FF$. By abuse of terminology we call $\omega$ the ``BV field''. Let us introduce notations for the structure constants:
$\pi_{IJ}=\pi(e_I,e_J)$, $m_{IJK}=\pi(e_I,m(e_J,e_K))$, $f_{abc}=\pi_\g(T_a, [T_b,T_c])$, $d_{IJ}=\pi(e_I,d e_J)$. We will also use the shorthand notation for degrees $|I|=|e_I|$. In terms of $\pi$ the BV Laplacian and the anti-bracket are
\begin{eqnarray*}
\Delta f & = & \frac{1}{2}\;\sum_{I,J,a}(\pi^{-1})^{IJ}\frac{\dd}{\dd\omega^{Ia}}\frac{\dd}{\dd\omega^{Ja}} f\\
\{f,g\} & = & f \left(\sum_{I,J,a} (-1)^{|I|+1}(\pi^{-1})^{IJ} \frac{\ola\dd}{\dd \omega^{Ia}} \frac{\ora\dd}{\dd \omega^{Ja}}\right) g
\end{eqnarray*}

\begin{proposition} \label{prop3}
The action $S\in\Fun(\FF)$ defined as
\begin{multline} S:=\frac{1}{2}\pi(\omega,d\omega)+\frac{1}{6}\pi(\omega,l(\omega,\omega))=\\
=\frac{1}{2}\sum_{I,J,a} (-1)^{|I|+1} d_{IJ}\omega^{Ia}\omega^{Ja}+\frac{1}{6}\sum_{I,J,K,a,b,c} (-1)^{|J|\,(|K|+1)} f_{abc} m_{IJK} \omega^{Ia} \omega^{Jb} \omega^{Kc}  \label{abstract CS action}
\end{multline}
satisfies the QME with BV Laplacian defined by the odd symplectic structure (\ref{sigma from pi}) on $\FF$.
\end{proposition}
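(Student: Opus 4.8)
The plan is to exploit that $S$ in (\ref{abstract CS action}) contains no explicit powers of $\hbar$, so the QME $\frac12\{S,S\}+\hbar\,\Delta S=0$ holds in $\Fun(\FF)[[\hbar]]$ if and only if its two $\hbar$\ndash homogeneous components vanish separately, i.e.\ the classical master equation $\{S,S\}=0$ together with $\Delta S=0$. I would establish these two statements independently.

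For $\Delta S=0$ I would argue by the polynomial degree of the two summands of $S$. The quadratic part $S_0=\frac12\pi(\omega,d\omega)$ is sent by the second\ndash order operator $\Delta$ to a constant; since $\Delta$ carries ghost number $+1$ while every constant has ghost number $0$, that constant must vanish. (Equivalently, it is proportional to $\sum_{I,J}(\pi^{-1})^{IJ}d_{IJ}$, which is zero because $(\pi^{-1})^{IJ}\neq0$ forces $|I|+|J|=3$ whereas $d_{IJ}\neq0$ forces $|I|+|J|=2$.) The cubic part is sent by $\Delta$ to a linear function; the two derivatives contract a pair of the three fields, and because $\{T_a\}$ is orthonormal their Lie\ndash algebra indices get identified and summed, producing the factor $\sum_a f_{aac}=0$. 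This vanishes precisely because the structure constants $f_{abc}=\pi_\g(T_a,[T_b,T_c])$ of a quadratic Lie algebra are totally antisymmetric (unimodularity). Hence $\Delta S=0$.

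For the classical master equation I would expand $\{S,S\}=\{S_0,S_0\}+2\{S_0,S_3\}+\{S_3,S_3\}$, where $S_3=\frac16\pi(\omega,l(\omega,\omega))$, and sort the result by its polynomial degree in $\omega$: the quadratic contribution comes only from $\{S_0,S_0\}$, the cubic only from $\{S_0,S_3\}$, and the quartic only from $\{S_3,S_3\}$. Writing out the anti\ndash bracket and using the adjointness relation (\ref{d self-adj}) and the cyclicity (\ref{cyclicity of l}) to move all derivatives onto a single factor, each contribution takes the form $\pi(\omega,\cdots)$: the quadratic one is proportional to $\pi(\omega,d^2\omega)$ and vanishes by $d^2=0$; the cubic one measures the failure of $d$ to be a derivation of $l$ and vanishes by the graded Leibniz rule; and the quartic one is the pairing of $\omega$ with the Jacobiator of $l$ and vanishes by the graded Jacobi identity. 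Conceptually this says that the Hamiltonian vector field $Q=\{S,\cdot\}$ is the homological vector field on $\FF$ encoding the dg Lie structure $(d,l)$ on $\g\otimes\C$, and that $\{S,S\}=0$ is equivalent to $Q^2=0$; the adjointness and cyclicity axioms of a dg Frobenius\ndash Lie algebra are exactly what is needed for $\pi$ to identify this vector field with the differential of $S$.

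I expect the only genuine obstacle to be the sign bookkeeping. The suspension $s:\g\otimes\C\ra\FF$ enters through (\ref{sigma from pi}), the odd symplectic form and the anti\ndash bracket each introduce Koszul signs depending on $|I|$, and the three pieces above must first be symmetrized (using cyclicity) before $d^2=0$, the Leibniz rule, and Jacobi can be applied termwise. Getting every factor $(-1)^{|I|}$ and $(-1)^{|J|(|K|+1)}$ to line up so that the three structural identities annihilate the three homogeneous components is the delicate part; the underlying algebra is routine once the conventions are fixed.
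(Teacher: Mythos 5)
Your proposal is correct and takes essentially the same route as the paper's proof: you split the QME into the classical part, whose quadratic, cubic and quartic components are annihilated respectively by $d^2=0$, the graded Leibniz rule, and the graded Jacobi identity (after using cyclicity of $\pi$ to collect everything into the form $\pi(\omega,\cdots)$), and the quantum part $\Delta S=0$, which vanishes by the degree argument for $d$ and by unimodularity of $\g$. The paper's proof is exactly this computation, merely phrasing the $\Delta S$ terms as supertraces ($\Str_{\g\otimes\C}\,d$ and $\Str_{\g\otimes\C}\,l(\omega,\bt)$) where you use ghost-number/index contractions.
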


\begin{proof}
Indeed, let us check the CME:
\begin{multline*}
\frac{1}{2}\{S,S\}=\\
\frac{1}{8}\{\pi(\omega,d\omega),\pi(\omega,d\omega)\}+\frac{1}{12}\{\pi(\omega,d\omega),\pi(\omega,l(\omega,\omega))\}+\frac{1}{72}\{\pi(\omega,l(\omega,\omega)),\pi(\omega,l(\omega,\omega))\}\\
=-\frac{1}{2}\pi(\omega,d^2\omega)-\frac{1}{2}\pi(\omega,dl(\omega,\omega))-\frac{1}{8}\pi(\omega,l(\omega,l(\omega,\omega)))=0
\end{multline*}
The first term vanishes due to $d^2=0$, the second --- due to the Leibniz identity for $\g\otimes\C$, since property (\ref{cyclicity of l}) implies
\[
\frac{1}{2}\pi(\omega,dl(\omega,\omega))=\frac{1}{6}\pi\left(\omega,dl(\omega,\omega)-l(d\omega,\omega)+l(\omega,d\omega)\right),
\]
and the third term is zero due to the Jacobi identity for $\g\otimes\C$. Next, check the quantum part of the QME:
$$\hbar\;\Delta S = -\hbar\;\frac{1}{2}\; \Str_{\g\otimes\C}\; d - \hbar\;\frac{1}{2}\;\Str_{\g\otimes\C}\;l(\omega,\bt)=0$$
Here the first term vanishes since $d$ raises degree and the second term vanishes due to unimodularity of Lie algebra $\g$.
\end{proof}

The BV action (\ref{abstract CS action}) can be viewed as an abstract model (or toy model, since $\C$ is finite dimensional) for Chern--Simons theory on a connected closed orientable 3-manifold. We associate such a model to any finite dimensional non-negatively graded dg Frobenius algebra $\C$ with pairing of degree $-3$ and $B_0=B_3=1$, and arbitrary finite dimensional quadratic Lie algebra of coefficients $\g$. We are interested in the effective BV action for (\ref{abstract CS action}) induced on cohomology $\FF'=H^\bt(\C,\g)[1]$. We will now specialize the general induction procedure sketched in Section~\ref{sec: eff BV action} to this case.

\subsection{Effective action on cohomology}\label{sec 2.2}
Let $\iota: H^\bt(\C)\hra \C^\bt$ be an embedding of cohomology into $\C$. Note that $\iota$ is not just an arbitrary chain map between two fixed complexes, but is also subject to condition $\iota ([a])=a+d(\ldots)$ for any cocycle $a\in\C$. This implies in particular that the only allowed deformations of $\iota$ are of the form $\iota\mapsto \iota+ d\; \delta I$ where
$\delta I: H^\bt(\C)\ra \C^{\bt-1}$
is an arbitrary degree -1 linear map. This is indeed a type II deformation (in the terminology of
Section~\ref{sec: eff BV action}), while type III deformations are prohibited in this setting.

Let also $K: \C^\bt\ra \C^{\bt-1}$ be a symmetric chain homotopy retracting $\C^\bt$ to $H^\bt(\C)$, that is a degree -1 linear map satisfying
\begin{eqnarray}
dK+Kd=\id_\C-\PP' \label{dK+Kd}\\
\quad \pi(Ka,b)+(-1)^{|a|}\pi(a,Kb)=0 \label{K self-adj}\\
K\circ\iota=0\label{K iota=0}
\end{eqnarray}
where $\PP'=\iota\circ \iota^T: \C\ra \C$ is the orthogonal (w.r.t. $\pi$) projection to the representatives of cohomology in $\C$. We require the additional property
\be K^2=0 \label{K^2=0}\ee

\begin{Rem}[cf. \cite{GL}] An arbitrary linear map $K_0: \C^\bt\ra \C^{\bt-1}$ satisfying just (\ref{dK+Kd}) can be transformed into a chain homotopy $K$ with all the properties (\ref{dK+Kd},\ref{K self-adj},\ref{K iota=0},\ref{K^2=0}) via a chain of transformations $K_0\mapsto K_1\mapsto K_2\mapsto K_3=K$ where
\begin{eqnarray}
K_1 &=& \frac{1}{2}(K_0-K_0^T) \nonumber \\
K_2 &=& (\id_\C-\PP')\; K_1\; (\id_\C-\PP') \label{K_2} \\
K_3 &=& K_2\; d\; K_2 \label{K_3=KdK}
\end{eqnarray}
\end{Rem}

Having $\iota$ and $K$ we can define a Hodge decomposition for $\C$ into representatives of cohomology, $d$-exact part and $K$-exact part:
\be \C=\im(\iota)\oplus \underbrace{\C_{d-ex}}_{\mr{im}(d)}\oplus \underbrace{\C_{K-ex}}_{\mr{im}(K)} \label{Hodge decomp for C}\ee
Properties (\ref{K self-adj}), (\ref{K iota=0}), (\ref{K^2=0}) and skew-symmetry of differential (\ref{d self-adj}) imply the orthogonality properties for Hodge decomposition (\ref{Hodge decomp for C}):
$$\im(\iota)^\perp=\C_{d-ex}\oplus \C_{K-ex} \;,\quad (\C_{d-ex})^\perp=\im(\iota)\oplus \C_{d-ex} \;,\quad (\C_{K-ex})^\perp = \im(\iota)\oplus \C_{K-ex} $$

In terms of Hodge decomposition (\ref{Hodge decomp for C}) the splitting (\ref{F=F'+F''}) of the space of BV fields $\FF=\g\otimes\C[1]$ is given by
$$\underbrace{\g\otimes\C[1]}_{\FF}=\underbrace{\iota(H^\bt(\C,\g)[1])}_{\iota(\FF')}\oplus \underbrace{\g\otimes \C_{d-ex}[1]\oplus \g\otimes \C_{K-ex}[1]}_{\FF''}$$
and we choose the Lagrangian subspace
\be \LL_K=\g\otimes\C_{K-ex}[1]\subset \FF'' \label{L_K}\ee
We define the ``effective BV action on cohomology'' (or ``on zero-modes'') $W\in\Fun(\FF')[[\hbar]]$ for an abstract Chern--Simons action (\ref{abstract CS action}) by a normalized fiber BV integral
\be e^{W(\alpha)/\hbar}=\frac{1}{N}\int_{\LL_K}e^{S(\iota(\alpha)+\omega'')/\hbar}\mu_{\LL_K} \label{W def by BV integral}\ee
where \be N=\int_{\LL_K} e^{S_0(\omega'')/\hbar} \mu_{\LL_K} \label{N}\ee is the normalization factor
and $$S_0(\omega'')=\frac{1}{2}\pi(\omega'',d\omega'')$$
is the free part of the action $S$.
To lighten somewhat the notation, we denoted the effective action by $W$ instead of $S'$ and the BV field associated to $\FF'=H^\bt(\C,\g)[1]$ by $\alpha$ instead of $\omega'$. Let $\{e_p\}$ be a basis of the cohomology $H^\bt(\C)$. Then $\alpha=\sum_{a,p} T_a e_p \alpha^{pa}$ where $\alpha^{pa}$ are coordinates on $\FF'$ with ghost numbers $\gh(\alpha^{pa})=1-|e_p|$.  We have the following decomposition of $S(\iota(\alpha)+\omega'')$:
\begin{multline}
S(\iota(\alpha)+\omega'')=\underbrace{\frac{1}{6}\pi(\iota(\alpha),l(\iota(\alpha),\iota(\alpha)))}_{W_\mr{prod}(\alpha)}+\underbrace{\frac{1}{2}\pi(\omega'',d\omega'')}_{S_0(\omega'')}+\\
+\underbrace{\frac{1}{2}\pi(\omega'',l(\iota(\alpha),\iota(\alpha)))+ \frac{1}{2}\pi(\iota(\alpha),l(\omega'',\omega''))+\frac{1}{6}\pi(\omega'',l(\omega'',\omega''))}_{S_\mr{int}(\alpha,\omega'')}
\label{S=W_prod+S_0+S_int}
\end{multline}

The perturbation expansion for (\ref{W def by BV integral}) is obtained in a standard way and can be written as
\begin{multline}W(\alpha)=W_\mr{prod}(\alpha)+\hbar\log\left(e^{-\hbar\;\frac{1}{2}\pi^{-1}(\frac{\dd}{\dd\omega''},K\frac{\dd}{\dd\omega''})}|_{\omega''=0}\circ e^{S_\mr{int}(\alpha,\omega'')/\hbar}\right)=\\
=\sum_{l=0}^\infty \hbar^l \sum_{n=0}^\infty \sum_{\Gamma\in G_{l,n}} \frac{1}{|\mr{Aut}(\Gamma)|} W_\Gamma(\alpha)
\label{W pert series}
\end{multline}
Where $G_{l,n}$ denotes the set of connected non-oriented Feynman graphs with vertices of valence 1 and 3 (we would like to understand them as trivalent graphs with ``leaves'' allowed, i.e.\ external edges), with $l$ loops and $n$ leaves. The contribution $W_\Gamma(\alpha)$ of each Feynman graph $\Gamma\in G_{l,n}$ is a homogeneous polynomial of degree $n$ in $\{\alpha^{pa}\}$ and of ghost number 0 obtained by decorating each leaf of $\Gamma$ by $\iota^I_p \alpha^{pa}$, each trivalent vertex by $f_{abc}m_{IJK}$ and each (internal) edge by $\delta^{ab}K^{IJ}$, and taking contraction of all indices, corresponding to incidence of vertices and edges in $\Gamma$. One should also take into account signs for contributions, which can be obtained from the exponential formula for perturbation series (\ref{W pert series}). The cubic term
\begin{multline} W_\mr{prod}(\alpha)=\frac{1}{6}\pi(\iota(\alpha),l(\iota(\alpha),\iota(\alpha)))=\\
=\frac{1}{6}\sum_{a,b,c}\sum_{p,q,r} (-1)^{|e_q|\, (|e_r|+1)} f_{abc} \mu_{pqr} \alpha^{pa} \alpha^{qb} \alpha^{rc} \label{W_prod}\end{multline}
is the contribution of the simplest Feynman diagram $\Gamma_{0,3}$, the only element of $G_{0,3}$. Here $\mu_{pqr}=\pi(\iota(e_p),m(\iota(e_q),\iota(e_r)))$ are structure constants of the induced associative product on $H^\bt(\C)$ (hence the notation $W_\mr{prod}$).

\begin{Rem} The perturbative expansion (\ref{W pert series}) is related to homological perturbation theory (HPT) in the following way. Denote by $c_{l,n}(\alpha)=n!\sum_{\Gamma\in G_{l,n}} \frac{1}{|\mr{Aut}(\Gamma)|} W_\Gamma(\alpha)$ the total contribution of Feynman graphs with $l$ loops and $n$ leaves to the effective action (\ref{W pert series}), with additional factor $n!$. Then each
\[
c_{l,n}\in S^n(\FF'^*)\cong \mr{Hom}(\wedge^n (H^\bt(\C,\g)),\RR)
\]
can be understood as a (super-)anti-symmetric $n$-ary operation on cohomology $H^\bt(\C,\g)$, taking values in numbers. Now suppose $l_n\in \mr{Hom}(\wedge^n (H^\bt(\C,\g)),H^\bt(\C,\g))$ are the $L_\infty$ operations on cohomology, induced from dg Lie algebra $\g\otimes\C$ (by means of HPT). Then it is easy to see that $c_{0,n+1}(\alpha_0,\ldots,\alpha_n)=\pi'(\alpha_0,l_n(\alpha_1,\ldots,\alpha_n))$ and trees for HPT (the Lie version of trees from \cite{KS}, cf. also \cite{GL}) are obtained from Feynman trees for $W(\alpha)$ by assigning one leaf as a root and inserting the inverse of pairing\footnote{We use notation $\pi'=\pi(\iota(\bt),\iota(\bt)): \C\otimes\C\ra\RR$ for the induced pairing on cohomology $H^\bt(\C)$. By a slight abuse of notation, we also use $\pi'$ to denote the pairing on $H^\bt(\C,\g)$ induced from $\pi_{\g\otimes\C}$.} $(\pi')^{-1}$ there, or vice versa: Feynman trees are obtained from trees of HPT by reverting the root\footnote{This means the following: let $T$ be a binary rooted tree with $n$ leaves, oriented towards the root; let $\bar{T}$ be the non-oriented (and non-rooted) tree with $n+1$ leaves, obtained from $T$ by forgetting the orientation (and treating the root as additional leaf). Then the weight $W_{\bar{T}}(\alpha)$ of $\bar{T}$ as a Feynman graph and the contribution $l_{T}$ of tree $T$ (without the symmetry factor) to the induced $L_\infty$ operation $l_n$ are related by $W_{\bar{T}}(\alpha)=\pi'(\alpha,l_T(\underbrace{\alpha,\ldots,\alpha}_n))$. Pictorially this is represented by inserting a bivalent vertex (associated to the operation $\pi'(\bt,\bt)$) at the root of $T$. Both edges incident to this vertex are incoming, thus we say that the root becomes reverted.} with $\pi'$ and forgetting the orientation of edges (cf.\ Section~7.2.1 of \cite{M}).
Thus we can loosely say that the BV integral (\ref{W def by BV integral}) defines a sort of ``loop enhancement'' of HPT for a cyclic dg Lie algebra $\g\otimes\C$. Also, in this language (due to A.~Losev), using the BV-Stokes theorem to prove that the effective action $W$ satisfies the quantum master equation can be viewed as the loop-enhanced version of using the HPT machinery to prove the system of quadratic relations (homotopy Jacobi identities) on induced $L_\infty$ operations $l_n$.
\end{Rem}

Let us introduce a Darboux basis in $H^\bt(\C)$. Namely, let $e_{(0)}=[1]$ be the basis vector in $H^0(\C)$, the cohomology class of unit $1\in \C^0$ (recall that we assume $B_0=\dim H^0(\C)=1$) and let $e_{(3)}$ be the basis vector in $H^3(\C)$, satisfying $\pi'(e_{(0)},e_{(3)})=1$ (i.e.\ $e_{(3)}$ is represented by some top-degree element $v=\iota(e_{(3)})\in \C^3$, normalized by the condition $\pi(1,v)=1$). Let also $\{e_{(1)i}\}$ be some basis in $H^1(\C)$ and $\{e_{(2)}^i\}$ the dual basis in $H^2(\C)$, so that $\pi'(e_{(1)i},e_{(2)}^j)=\delta^j_i$. The BV field $\alpha$ is then represented as
$$\alpha=\sum_a e_{(0)}T_a \alpha_{(0)}^a+ \sum_{a,i} e_{(1)i}T_a \alpha_{(1)}^{ia}+ \sum_{a,i} e_{(2)}^i T_a \alpha_{(2)i}^{a}+\sum_a e_{(3)}T_a \alpha_{(3)}^a$$
In Darboux coordinates $\{\alpha_{0}^a,\alpha_{(1)}^{ia},\alpha_{(2)i}^a,\alpha_{(3)}^a\}$ the BV Laplacian on $\FF'$ is
$$\Delta'=\sum_a \frac{\dd}{\dd \alpha_{(0)}^a}\;\frac{\dd}{\dd \alpha_{(3)}^a}+\sum_{a,i} \frac{\dd}{\dd \alpha_{(1)}^{ia}}\;\frac{\dd}{\dd \alpha_{(2)i}^a}$$
It is also convenient to introduce $\g$-valued coordinates on $\FF'$:
$$\alpha_{(0)}=\sum_a T_a \alpha_{(0)}^a \;,\; \alpha_{(1)}^i=\sum_a T_a \alpha_{(1)}^{ia} \;,\; \alpha_{(2)i}=\sum_a T_a \alpha_{(2)i}^{a} \;,\; \alpha_{(3)}=\sum_a T_a \alpha_{(3)}^a$$
The ghost numbers of $\alpha_{(0)},\alpha_{(1)}^i,\alpha_{(2)i},\alpha_{(3)}$ are 1, 0, -1, -2 respectively.
In terms of this Darboux basis, the trivial part (\ref{W_prod}) of the effective action is
\begin{multline*}
W_\mr{prod}(\alpha)=\\
=\frac{1}{2}\sum_{a,b,c}f_{abc}\alpha_{(0)}^a\alpha_{(0)}^b\alpha_{(3)}^c-\sum_{a,b,c}\sum_i f_{abc}\alpha_{(0)}^a\alpha_{(1)}^{ib}\alpha_{(2)i}^c+
\frac{1}{6} \sum_{a,b,c}\sum_{i,j,k} f_{abc} \mu_{ijk} \alpha_{(1)}^{ia} \alpha_{(1)}^{jb} \alpha_{(1)}^{kc} =\\
=\underbrace{\frac{1}{2}\pi_\g (\alpha_{(3)}, [\alpha_{(0)},\alpha_{(0)}])}_{W_\mr{prod}^{003}}\underbrace{-\sum_i \pi_\g (\alpha_{(0)},[\alpha_{1}^i,\alpha_{(2)i}])}_{W_\mr{prod}^{012}} +\underbrace{\frac{1}{6}\sum_{i,j,k} \mu_{ijk}\; \pi_g (\alpha_{(1)}^i, [\alpha_{(1)}^j,\alpha_{(1)}^k])}_{W_\mr{prod}^{111}}
\end{multline*}
Here $\mu_{ijk}$ is the totally antisymmetric tensor of structure constants of multiplication of 1-cohomologies: $\mu_{ijk}=\pi(\iota(e_{(1)i}),m(\iota(e_{(1)j}),\iota(e_{(1)k})))$.

\begin{proposition}
\label{prop4}
The effective BV action $W$, induced from the abstract Chern--Simons action (\ref{abstract CS action}) on $\FF'=H^\bt(\C,\g)[1]$ has the form
\be W(\alpha)=W_\mr{prod}(\alpha)+F(\alpha_{(1)}^1,\ldots,\alpha_{(1)}^{B_1};\hbar) \label{W ansatz}\ee
where $F\in \left(\Fun(\g^{B_1})[[\hbar]]\right)^\g$ is some $\hbar$-dependent function on $H^1(\C,\g)[1]\cong \g^{B_1}$, invariant under the diagonal adjoint action of $\g$, i.e.
\be F(\alpha_{(1)}^1+[X,\alpha_{(1)}^1],\ldots,\alpha_{(1)}^{B_1}+[X,\alpha_{(1)}^{B_1}];\hbar)=F(\alpha_{(1)}^1,\ldots,\alpha_{(1)}^{B_1};\hbar) \mod X^2
\label{F ad-invariance}\ee
at the first order in $X\in\g$.
\end{proposition}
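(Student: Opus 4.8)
The plan is to work directly with the Feynman-graph expansion (\ref{W pert series}) of the effective action and to establish two separate facts: first, that every connected graph $\Gamma$ carrying at least one internal edge contributes only monomials in the degree-one fields $\alpha_{(1)}^i$, so that all dependence on $\alpha_{(0)},\alpha_{(2)},\alpha_{(3)}$ is confined to $W_\mr{prod}=W_{\Gamma_{0,3}}$; and second, that the remaining function $F$ of the $\alpha_{(1)}^i$ is invariant under the diagonal adjoint action. For the first fact I would combine a degree count on $\C$ with a vanishing argument that exploits the special role of the unit $1=\iota(e_{(0)})$ and the Hodge identities for $K$.

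First I record the local degree constraints forced by the Feynman rules on a nonvanishing graph: a trivalent vertex carries $m_{IJK}=\pi(e_I,m(e_J,e_K))$, nonzero only if $|e_I|+|e_J|+|e_K|=3$; an internal edge carries $K^{IJ}$, the inverse of $d_{IJ}=\pi(e_I,de_J)$ on $\C_{K-ex}$, nonzero only if $|e_I|+|e_J|=2$; and a leaf carries $\iota(e_p)$ of degree $|e_p|$. Summing the vertex relation over all $V$ trivalent vertices counts every half-edge exactly once and gives $\sum_{\text{half-edges}}\deg=3V$, whereas splitting the same sum into internal and external half-edges gives $2E+\sum_{\text{leaves}}|e_p|$; since trivalence also yields $3V=2E+n$, I obtain $\sum_{\text{leaves}}|e_p|=n$. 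Writing $n_j$ for the number of leaves in $H^j(\C)$, this is $n_1+2n_2+3n_3=n_0+n_1+n_2+n_3$, i.e.
\[
n_0=n_2+2n_3,
\]
which is just the vanishing of the total ghost number, as it must be.

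The heart of the argument is to show that a connected graph with at least one internal edge and at least one unit-leaf (so $n_0>0$, a situation possible only because $B_0=1$ forces every degree-zero class to be a multiple of $1$) vanishes. At the vertex carrying the unit-leaf, the cyclicity (\ref{cyclicity of m}) together with the unit axiom collapses the cubic vertex $m_{IJK}$ to the pairing $\pi$ of its two remaining legs. I would then run through the cases for those two legs in a connected graph that possesses an internal edge: if both are internal edges one gets $\pi^{-1}K\,\pi\,\pi^{-1}K=\pi^{-1}K^2=0$ by (\ref{K^2=0}); if they are the two ends of one self-loop one gets $\Str K=0$ since $K$ lowers degree; if one is another leaf one gets $K\iota=0$ by (\ref{K iota=0}); and the only remaining option, all three legs of the vertex being leaves, forces $\Gamma=\Gamma_{0,3}$, contradicting the presence of an internal edge. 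Hence $n_0=0$ for every graph beyond $\Gamma_{0,3}$, and the degree relation then forces $n_2=n_3=0$, so all leaves lie in $H^1$; this proves the ansatz (\ref{W ansatz}) with $F$ collecting the graphs with only $\alpha_{(1)}$-leaves. This collapse-and-vanishing step is where the most care is needed, since one must verify that \emph{every} configuration of the unit-vertex in a connected graph with an internal edge is annihilated by one of $K^2=0$, $K\iota=0$, or $\Str K=0$.

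Finally, for the adjoint invariance (\ref{F ad-invariance}) I would note that each surviving $W_\Gamma$ in $F$ is a complete contraction of the $\g$-invariant tensors $f_{abc}$ (at vertices) and $\delta^{ab}$ (at edges) against the adjoint-valued leaves $\alpha_{(1)}^{ia}$, the cohomology indices contributing only a numerical factor; such a full contraction of invariant tensors is automatically invariant under the simultaneous adjoint action on the leaves, so $F$ is $\g$-invariant term by term. Alternatively one may argue a posteriori from Proposition~\ref{prop1}: isolating the part of $\tfrac12\{W,W\}'+\hbar\Delta'W=0$ that is linear in $\alpha_{(0)}$ and otherwise a polynomial in the $\alpha_{(1)}^i$ gives the invariance of $W_\mr{prod}^{111}+F$ under the infinitesimal adjoint action generated by $\alpha_{(0)}$, and since $W_\mr{prod}^{111}$ is manifestly invariant this again yields (\ref{F ad-invariance}).
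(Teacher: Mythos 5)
Your proof is correct and follows essentially the same route as the paper's: the ghost-number/degree count forcing a unit leaf on any graph other than $\Gamma_{0,3}$, the collapse of the unit vertex to the pairing, the vanishing via $K\circ\iota=0$ and $K^2=0$, and the ad-invariance obtained either term-by-term from contraction of invariant tensors (this is the paper's ``adjoint $\G$-symmetry'' explanation, made infinitesimal) or from the QME exactly as in the paper's displayed computation. Your explicit treatment of the self-loop case via $\Str\, K=0$ (which also vanishes because $f_{abc}\delta^{bc}=0$) fills in a case the paper's terser wording glosses over, but this is a refinement rather than a different argument.
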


\begin{proof}
Ansatz (\ref{W ansatz}) follows from the observation that the values of individual Feynman graphs $\Gamma\neq \Gamma_{0,3}$ in (\ref{W pert series}) depend only on
the 1-cohomology: $W_\Gamma=W_\Gamma(\{\alpha_{(1)}^i\})$. The argument is as follows: suppose not all leaves of $\Gamma$ are decorated with insertions of 1-cohomology. Then, since $\gh(W_\Gamma)=0$, there is at least one leaf decorated with insertion of 0-cohomology. Since $\iota(H^0(\C))=\RR\cdot 1$, the value of $\Gamma$ will contain one of the expressions $K^2(\cdots)$, $K(\iota(\cdots))$. Hence Feynman diagrams with insertion of cohomology of degree $\neq 1$ vanish due to (\ref{K iota=0}), (\ref{K^2=0}). This proves (\ref{W ansatz}).

By construction, $W$ has to satisfy the QME (Proposition \ref{prop1}). The QME for an action satifying ansatz (\ref{W ansatz}) is equivalent to ad-invariance of $F$ (\ref{F ad-invariance}):
\begin{multline*}
\frac{1}{2}\{W,W\}'+\hbar\Delta'W=\{W_\mr{prod}^{012},F\}'=\sum_{i}\sum_{a,b,c} f_{abc}\alpha_{(0)}^a\alpha_{(1)}^{ib}\frac{\dd}{\dd\alpha_{(1)}^{ic}} F=\\
=\sum_i \left<[\alpha_{(0)},\alpha_{(1)}^i],\frac{\dd}{\dd\alpha_{(1)}^i}\right>_\g F
\end{multline*}
where $<\bt,\bt>_\g$ denotes the canonical pairing between $\g$ and $\g^*$.

Another explanation of (\ref{F ad-invariance}) is the following: if $\g$ is the Lie algebra of the Lie group $\G$, then the original abstract Chern--Simons action (\ref{abstract CS action}) is invariant under the adjoint action of $\G$, i.e.\ $\omega\mapsto g\omega g^{-1}$ for $g\in \G$. The embedding $\iota$ and the choice of the Lagrangian subspace $\LL\subset \FF''$ are also compatible with this symmetry. Hence $W(\alpha)$ is also invariant under the adjoint action of $\G$: $\alpha\mapsto g\alpha g^{-1}$, and (\ref{F ad-invariance}) is the infinitesimal form of this symmetry.
\end{proof}

\begin{Rem} There is another argument for ansatz (\ref{W ansatz}) that can be formulated on the level of BV integral (\ref{W def by BV integral}) itself, rather than on
the level of Feynman diagrams. Namely, the restriction of $S_\mr{int}(\alpha,\omega'')$ (we refer to decomposition (\ref{S=W_prod+S_0+S_int})) to the Lagrangian subspace $\LL_K$ does not depend on $\alpha_{(0)}$. This means that the only term depending on $\alpha_{(0)}$ in (\ref{S=W_prod+S_0+S_int}) is the trivial one $W_\mr{prod}(\alpha)$, constant on $\LL_K$. Hence the non-trivial part of the effective action $W(\alpha)-W_\mr{prod}(\alpha)$ does not depend on $\alpha_{(0)}$. Since it also has to be of ghost number zero, it can only depend on $\alpha_{(1)}$.
\end{Rem}

\subsection{Dependence of the effective action on cohomology on induction data} \label{sec 2.3}
The effective action $W(\alpha)$ depends on induction data $(\iota,K)$, and we are interested in describing how $W(\alpha)$ changes due to the
deformation of $(\iota,K)$.

In the terminology of Section~\ref{sec: eff BV action},
the type I deformations of induction data are of the form $\iota\mapsto \iota$, $K\mapsto K+[d,\delta\kappa]$ where
$\delta\kappa: \C_{d-ex}^\bt\ra \C_{K-ex}^{\bt-2}$ is a skew-symmetric linear map of degree -2. The corresponding deformation of the Lagrangian subspace $\LL_K$ is described by the gauge fixing fermion $\Psi=\frac{1}{2}\pi(\omega,d\;\delta\kappa\;d\; \omega)$. Type II deformations change embedding as $\iota\mapsto \iota+d\; \delta I$ where $\delta I: H^\bt(\C)\ra \C_{K-ex}^{\bt-1}$, and change chain homotopy in a minimal way (so as not to spoil properties (\ref{K iota=0}), (\ref{dK+Kd})): $K\mapsto K+\iota\; \delta I^T-\delta I\; \iota^T$. Type III transformations are forbidden in this setting, as discussed above (we have a canonical surjective map from $\ker d\subset \C$ to $H^\bt(\C)$ that sends the cocycle $\alpha$ to its cohomology class $[\alpha]$).

Due to Proposition \ref{prop2}, an infinitesimal deformation of $(\iota,K)$ induces an infinitesimal canonical transformation of $W(\alpha)$
$$W\mapsto W+\{W,R'\}'+\hbar \Delta' R'$$
with generator given by the fiber BV integral
\be R'(\alpha)=e^{-W(\alpha)/\hbar}\int_{\LL_K}e^{S(\iota(\alpha)+\omega'')/\hbar} \left(\frac{1}{2}\pi(\omega'',d\;\delta\kappa \;d \omega'')+\pi(\omega'',d\; \delta I \alpha) \right) \mu_{\LL_K} \label{R' via BV integral}\ee
This integral is evaluated perturbatively in analogy with (\ref{W pert series}):
\begin{multline}
R'(\alpha)=\\
e^{-W(\alpha)/\hbar}\cdot \; \left.e^{-\hbar\;\frac{1}{2}\pi^{-1}(\frac{\dd}{\dd\omega''},K\frac{\dd}{\dd\omega''})}\right|_{\omega''=0}\circ\\
\circ \left(e^{S_\mr{int}(\alpha,\omega'')/\hbar}\cdot (\frac{1}{2}\pi(\omega'',d\;\delta\kappa \;d \omega'')+\pi(\omega'',d\; \delta I \alpha) )\right)=\\
\sum_{l=0}^\infty \hbar^l \sum_{n=0}^\infty \sum_{\Gamma^M\in G^M_{l,n}} \frac{1}{|\mr{Aut}(\Gamma^M)|} R'_{\Gamma^M}(\alpha)
\label{R' pert series}
\end{multline}
Here the superscript $M$ stands for ``marked edge'', $G^M_{l,n}$ is the set of connected non-oriented trivalent graphs 
with $l$ loops and $n$ leaves and either one leaf or one internal edge marked. Values of Feynman graphs $R'_{\Gamma^M}(\alpha)$ are now homogeneous polynomials of degree $n$ and ghost number -1 on $\FF'$, obtained by the same Feynman rules as for $W_\Gamma(\alpha)$, supplemented with a Feynman rule for marked edge: we decorate the marked leaf with $\delta I^I_p \alpha^{pa}$ and marked internal edge with $\delta^{ab}\delta\kappa^{IJ}$.

\begin{proposition}\label{prop5}
The generator of the the infinitesimal canonical transformation induced on the effective BV action $W(\alpha)$ by the infinitesimal change of induction data $(\iota,K)$ has the following form:
\be R'(\alpha)=\sum_{a,i}\alpha_{(2)i}^a G^{ia}(\alpha_{(1)}^1,\ldots,\alpha_{(1)}^{B_1};\hbar) \label{prop5 R'}\ee
where $G=\sum_{a,i} G^{ia}\frac{\dd}{\dd\alpha_{(1)}^{ia}}\in(\mr{Vect}(\g^{B_1})[[\hbar]])^\g$ is some $\hbar$-dependent vector field on $H^1(\C,\g)[1]\cong\g^{B_1}$, equivariant under the diagonal adjoint action of $\g$, i.e.\
\be G^{i}(\alpha_{(1)}^1+[X,\alpha_{(1)}^1],\ldots,\alpha_{(1)}^{B_1}+[X,\alpha_{(1)}^{B_1}];\hbar)=
[X,G^{i}(\alpha_{(1)}^1,\ldots,\alpha_{(1)}^{B_1};\hbar)]\mod X^2 \label{prop5 G equivar}\ee
at first order in $X\in\g$, for all $i=1,\ldots,B_1$ (and we set $G^i:=\sum_a T_a G^{ia}$). The canonical transformation with generator (\ref{prop5 R'}) in terms of ansatz (\ref{W ansatz}) is
\be F\mapsto F+G\circ (W_\mr{prod}^{111}+F) +\hbar\;\mr{div}(G) \label{prop5 G acts on F}\ee
\end{proposition}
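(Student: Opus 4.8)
The plan is to establish the three assertions of the proposition in order, closely paralleling the treatment of $W$ in Proposition~\ref{prop4}: first the structural form (\ref{prop5 R'}) of $R'$, then the equivariance (\ref{prop5 G equivar}) of $G$, and finally the explicit action (\ref{prop5 G acts on F}) on $F$. For the \emph{form of $R'$}, I would start from Proposition~\ref{prop2}, specialized to the type~I and type~II deformations available in this setting (type~III being forbidden), so that the generator is the fiber BV integral (\ref{R' via BV integral}) evaluated by the marked-graph expansion (\ref{R' pert series}). The argument is then the ghost-number count used for the ansatz of $W$: since $R'$ has ghost number $-1$, if $n_0,n_1,n_2,n_3$ denote the numbers of degree-$0,1,2,3$ cohomology insertions at the (marked and unmarked) leaves, then $n_0-n_2-2n_3=-1$. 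Any unmarked leaf carrying the unit $\iota(e_{(0)})=1$ collapses its trivalent vertex to the pairing $\pi$, which vanishes by (\ref{K self-adj}), (\ref{K iota=0}), (\ref{K^2=0}) and the orthogonality of the Hodge decomposition; this persists when the adjacent edge is the marked one, since $\delta I$ and $\delta\kappa$ land in $\C_{K-ex}$, and a marked leaf cannot carry $e_{(0)}$ at all because $\delta I(H^0)\subset\C^{-1}=0$. Hence $n_0=0$, which forces $n_3=0$ and $n_2=1$: this is exactly (\ref{prop5 R'}), with $G^{ia}$ a power series in $\hbar$ depending only on $\alpha_{(1)}$.

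For \emph{equivariance}, I would exploit the fact that the canonical transformation produced by Proposition~\ref{prop2} carries $W$ to the effective action for the deformed induction data, which by Proposition~\ref{prop4} again has the form $W_\mr{prod}+F^{\mathrm{new}}$. Crucially $W_\mr{prod}$ is itself unchanged: it depends only on the cohomology classes and the induced product $\mu_{ijk}$, and under $\iota\mapsto\iota+d\,\delta I$ the first-order correction is $\pi(d\,\delta I(\bt),\bt)$ evaluated on cocycles, which vanishes after integration by parts via (\ref{d self-adj}). Therefore $\{W,R'\}'+\hbar\Delta'R'$ is a function of $\alpha_{(1)}$ alone. Writing $R'=\sum_i\pi_\g(\alpha_{(2)i},G^i)$, I would check on generators that $\{W_\mr{prod}^{012},\bt\}'$ implements the infinitesimal adjoint action with parameter $\alpha_{(0)}$ on both $\alpha_{(1)}$ and $\alpha_{(2)}$; consequently $\{W_\mr{prod}^{012},R'\}'$ is the total ad-variation of $R'$ and is the only $\alpha_{(0)}$-dependent contribution. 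Its required vanishing is ad-invariance of $R'$, which by ad-invariance of $\pi_\g$ is equivalent to (\ref{prop5 G equivar}).

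For the \emph{action on $F$}, with $G$ now equivariant I would compute $\{W,R'\}'+\hbar\Delta'R'$ termwise, using that $R'$ involves only the conjugate pair $(\alpha_{(1)},\alpha_{(2)})$. Then $\{W_\mr{prod}^{003},R'\}'=0$ (disjoint conjugate variables), $\{W_\mr{prod}^{012},R'\}'=0$ (the vanishing just established), while $\{W_\mr{prod}^{111},R'\}'=G\circ W_\mr{prod}^{111}$ and $\{F,R'\}'=G\circ F$, since for functions of $\alpha_{(1)}$ only their $\alpha_{(1)}$-derivative pairs with $\partial R'/\partial\alpha_{(2)}=G$. Finally $\hbar\Delta'R'=\hbar\,\mr{div}(G)$, because the $(\alpha_{(0)},\alpha_{(3)})$ block of $\Delta'$ annihilates $R'$ and the $(\alpha_{(1)},\alpha_{(2)})$ block contracts $\partial_{\alpha_{(2)}}$ and $\partial_{\alpha_{(1)}}$ on $R'=\sum\alpha_{(2)i}^aG^{ia}$. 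Summing gives $F\mapsto F+G\circ(W_\mr{prod}^{111}+F)+\hbar\,\mr{div}(G)$, which is (\ref{prop5 G acts on F}).

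The main obstacle I anticipate is the equivariance step: one must verify carefully, in the Darboux conventions of Section~\ref{sec 2.2}, that $\{W_\mr{prod}^{012},\bt\}'$ really implements the full adjoint action with the correct signs coming from the odd antibracket and the suspension, and that $W_\mr{prod}$ is genuinely invariant under the deformation, so that equivariance of $G$ is \emph{forced} (not merely compatible). The remaining computations are routine sign bookkeeping.
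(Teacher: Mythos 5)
Your proof is correct and follows essentially the same route as the paper's: the same ghost-number-plus-vanishing argument (using $K\iota=0$, $K^2=0$, $\delta I(e_{(0)})=0$ and Hodge orthogonality) for the ansatz (\ref{prop5 R'}), equivariance of $G$ forced by the requirement that the canonical transformation preserve the ansatz (\ref{W ansatz}) via the term $\{W_\mr{prod}^{012},R'\}'$, and the direct computation the paper calls ``straightforward'' for (\ref{prop5 G acts on F}). Your only additions are details the paper leaves implicit, notably the check that $W_\mr{prod}$ is unchanged to first order under $\iota\mapsto\iota+d\,\delta I$, which is indeed needed to conclude that the transformed action is a function of $\alpha_{(1)}$ alone.
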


\begin{proof}
The argumentat for ansatz (\ref{prop5 R'}) is pretty much the same as for (\ref{W ansatz}): the value $R'_{\Gamma^M}(\alpha)$ of each Feynman graph $\Gamma^M\in G^M_{l,n}$ is linear in $\alpha_{(2)}$ and does not depend on $\alpha_{(0)}, \alpha_{(3)}$ for the following reason: Unless we decorate one leaf of $\Gamma^M$ by $\alpha_{(2)}$ and all other leaves by $\alpha_{(1)}$, some leaf has to be decorated by $\alpha_{(0)}$ (since the total ghost number of $R'_{\Gamma^M}(\alpha)$ has to be -1). Then the contribution of this decoration of $\Gamma^M$ vanishes due to $\iota(H^0(\C))=\RR\cdot 1$ and the vanishing of the expressions $\delta I (e_{(0)}),\;K^2,\; K\;\delta\kappa,\;\delta\kappa\; K,\;K\iota, \;\delta\kappa\;\iota$, one of which necessarily appears as contribution of a neighborhood of the place of insertion of 0-cohomology on the Feynman graph. This proves ansatz (\ref{prop5 R'}).

The equivariance of $G$ (\ref{prop5 G equivar}) is equivalent to the fact that a canonical transformation with generator (\ref{prop5 R'}) preserves ansatz (\ref{W ansatz}) for $W(\alpha)$. Indeed, if $G$ were not equivariant, the term $\{W_\mr{prod}^{012},R'\}'$ would produce $\alpha_{(0)}$-dependence for the canonically transformed effective action. The other explanation is that the equivariance of $G$ is due to the invariance of $R'$ under the adjoint action of the group $\G$, which is due to the fact that a deformation of $(\iota,K)$ is trivial in $\g$-coefficients and hence consistent with the adjoint $\G$-action.

Rewriting the canonical transformation of the effective action (\ref{W ansatz}) with generator (\ref{prop5 R'}) as (\ref{prop5 G acts on F}) is straightforward.
\end{proof}

\begin{Rem} Analogously to Proposition \ref{prop4}, one can prove ansatz (\ref{prop5 R'}) on the level of BV integral (\ref{R' via BV integral}) instead of using Feynman diagrams. Namely, expressions $S(\iota(\alpha)+\omega'')-W_\mr{prod}(\alpha)$,\; $W(\alpha)-W_\mr{prod}(\alpha)$ and the expression in parentheses in (\ref{R' via BV integral}) all do not depend on $\alpha_{(0)}$. Hence $R'$ does not depend on $\alpha_{(0)}$. But it also has to be of ghost number -1, which can only be achieved if it is of form (\ref{prop5 R'}).
\end{Rem}

\subsection{Invariants} \label{sec 2.4}
We are interested in describing the effective action $W(\alpha)$ on cohomology modulo changes of induction data $(\iota,K)$. Due to Propositions \ref{prop4}, \ref{prop5}, we can give a complete solution (i.e.\ describe the complete invariant) in case $B_1=0$, and find some partial solution (i.e.\ describe some, probably incomplete, invariant) for the case of a formal Frobenius algebra $\C$, meaning that we can find representatives for cohomology $\iota:H^\bt(\C)\hra \C$ closed under multiplication. In particular, in case $B_1=1$ the algebra $\C$ is necessarily formal.

\begin{proposition} \label{prop6}
If $B_1=0$, the effective action on cohomology is
\be W(\alpha)=W_\mr{prod}^{003}(\alpha)+F(\hbar) \label{prop6 W}\ee
where $W_\mr{prod}^{003}(\alpha)=\frac{1}{2}\pi_\g(\alpha_{(3)},[\alpha_{(0)},\alpha_{(0)}])$ and $F(\hbar)$ is an $\hbar$-dependent constant, invariant under deformations of induction data $(\iota,K)$.
\end{proposition}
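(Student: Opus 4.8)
The plan is to read off the stated form of $W$ directly from Proposition~\ref{prop4} and then deduce the invariance of the constant $F(\hbar)$ from Proposition~\ref{prop5}. First, Poincaré duality on cohomology gives $B_2=B_1=0$, so the space of zero-modes $\FF'=H^\bullet(\C,\g)[1]$ carries only the coordinates $\alpha_{(0)}$ (ghost number $1$) and $\alpha_{(3)}$ (ghost number $-2$). In the ansatz (\ref{W ansatz}) the function $F$ depends only on the degree-one fields $\alpha_{(1)}^i$, of which there are now none; hence $F=F(\hbar)$ is an $\hbar$-dependent constant. Simultaneously $W_{\mathrm{prod}}$ collapses to its component $W^{003}_{\mathrm{prod}}$, the components $W^{012}_{\mathrm{prod}}$ and $W^{111}_{\mathrm{prod}}$ vanishing for lack of degree-one and degree-two arguments. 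This already yields (\ref{prop6 W}); all that remains is to show that $F(\hbar)$ does not move when the induction data are deformed.

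For that, the key step is to show that the generator $R'$ of the canonical transformation attached by Proposition~\ref{prop5} to an arbitrary infinitesimal deformation of $(\iota,K)$ vanishes identically. By that proposition (and the reasoning of its proof) $R'$ is independent of $\alpha_{(0)}$ and carries ghost number $-1$. But when $B_1=B_2=0$ the only coordinate other than $\alpha_{(0)}$ is $\alpha_{(3)}$, of ghost number $-2$, and a polynomial in $\alpha_{(3)}$ alone has ghost number in $\{0,-2,-4,\dots\}$, never $-1$. Hence no nonzero candidate exists and $R'=0$. Because type~III deformations are forbidden in this setting, the shift term $R'_{III}$ of Proposition~\ref{prop2} is absent as well, so the induced transformation $W\mapsto W+\{W,R'\}'+\hbar\,\Delta' R'$ reduces to the identity: $W$, and with it $F(\hbar)$, is unchanged under every infinitesimal deformation of the induction data.

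It then remains to integrate this infinitesimal statement. The admissible embeddings form an affine space (their deformations being $\iota\mapsto\iota+d\,\delta I$), and for each fixed $\iota$ the admissible homotopies are connected through the type~I deformations $K\mapsto K+[d,\delta\kappa]$; together with the accompanying type~II change of $K$ when $\iota$ varies, these exhaust all admissible deformations (type~III being forbidden). Hence the space of induction data $(\iota,K)$ is connected, and any two choices are joined by a path composed of type~I and type~II deformations. Since $W$ is constant along each such path, $F(\hbar)$ is the same for all induction data, which is the asserted invariance.

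I expect the real subtlety to be bookkeeping rather than structure. One must be certain that Proposition~\ref{prop5} captures \emph{every} admissible infinitesimal deformation --- in particular that type~III is genuinely excluded here, as it is by the canonical surjection $\ker d\to H^\bullet(\C)$ --- so that no constant can slip in through the normalization factor $N$ of (\ref{W def by BV integral}). Once $R'=0$ is secured, the normalized BV integral transforms as an honest function with no residual shift, and the claim follows; the only remaining routine point is the connectedness of the space of induction data (including the nonlinear constraint $K^2=0$) used in the integration step.
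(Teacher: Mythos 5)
Your proof is correct and follows essentially the same route as the paper: restrict the ansatz of Proposition~\ref{prop4} to $B_1=0$ (so $F$ becomes a constant and $W_\mr{prod}$ collapses to $W_\mr{prod}^{003}$), then observe that the generator $R'$ of Proposition~\ref{prop5} must vanish when there are no $\alpha_{(1)}$, $\alpha_{(2)}$ coordinates --- your ghost-number argument is exactly the reasoning behind the paper's ansatz (\ref{prop5 R'}). The only addition is your explicit integration of the infinitesimal statement via connectedness of the space of induction data, a point the paper leaves implicit.
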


\begin{proof}
Ansatz (\ref{prop6 W}) is a restriction of (\ref{W ansatz}) to the case $B_1=0$. Due to (\ref{prop5 R'}) and $B_1=0$, the generator of induced canonical transformation necessarily vanishes $R'=0$. Hence $F(\hbar)$ is invariant under deformation of $(\iota,K)$.
\end{proof}

So $F(\hbar)$ is the complete invariant of $W(\alpha)$ for the $B_1=0$ case (which is an abstract model for Chern--Simons theory on a rational homology sphere) and is given by
\begin{multline}
F(\hbar)=\hbar\log\left(e^{-\hbar\;\frac{1}{2}\pi^{-1}(\frac{\dd}{\dd\omega''},K\frac{\dd}{\dd\omega''})}|_{\omega''=0}\circ e^{\frac{1}{\hbar}\,\frac{1}{6}\pi(\omega'',l(\omega'',\omega''))}\right)=\\
=\sum_{l=2}^\infty \hbar^l \sum_{\Gamma^\mr{vac}\in G_{l,0}} \frac{1}{|\mr{Aut}(\Gamma^\mr{vac})|} F_{\Gamma^\mr{vac}} \label{F pert expansion}
\end{multline}
Where we sum over trivalent connected non-oriented graphs without leaves $\Gamma^\mr{vac}$ (the ``vacuum loops''). The contribution of a Feynman graph $F_{\Gamma^\mr{vac}}\in\RR$ is a number, computed by the same Feynman rules as for (\ref{W pert series}), just without the insertions of $\alpha$.

\begin{Exa}[Chevalley--Eilenberg complex of $\mathfrak{su}(2)$]
We obtain an interesting example of abstract Chern-Simons theory with $B_1=0$ if we choose
$$\C=S^\bt(\mathfrak{su}(2)^*[-1])=\RR\oplus \mathfrak{su}(2)^*[-1]\oplus \left(\wedge^2 \mathfrak{su}(2)^*\right)[-2]\oplus \left(\wedge^3 \mathfrak{su}(2)^*\right)[-3]$$
--- the Chevalley-Eilenberg complex of the Lie algebra $\su(2)$. This $\C$ is naturally a dg Frobenius algebra with super-commutative wedge product, Chevalley--Eilenberg differential
$$d:\quad e_1\mapsto e_2 e_3,\quad e_2\mapsto e_3 e_1,\quad e_3\mapsto e_1 e_2$$
and pairing
$$\pi({\bf{1}},e_1 e_2 e_3)=\pi(e_1, e_2 e_3)=\pi(e_2, e_3 e_1)=\pi(e_3, e_1 e_2)=1$$
Here $\{e_1,e_2,e_3\}$ is the basis in $\su(2)^*$, dual to the basis $\{-\frac{i}{2}\sigma_1,-\frac{i}{2}\sigma_2,-\frac{i}{2}\sigma_3\}$ in $\su(2)$, where $\{\sigma_i\}$ are the Pauli matrices; ${\bf{1}}$ is the unit in $\C^0=\RR$. This $\C$ can be understood as the algebra of left-invariant differential forms on the Lie group $SU(2)\sim S^3$ which is indeed quasi-isomorphic (as a dg algebra) to the whole de Rham algebra of the sphere $S^3$; thus the abstract Chern--Simons theory associated to this $\C$ is in a sense a toy model for true Chern--Simons theory on $S^3$. The
Hodge decomposition (\ref{Hodge decomp for C}) for $\C$ is unique:
$$\C=\underbrace{\RR\oplus \left(\wedge^3 \mathfrak{su}(2)^*\right)[-3]}_{H^\bt(\C)}\oplus \underbrace{\mathfrak{su}(2)^*[-1]}_{\C_{K-ex}}\oplus \underbrace{\left(\wedge^2 \mathfrak{su}(2)^*\right)[-2]}_{\C_{d-ex}} $$
and the BV field $\omega$ is
$$\omega=\underbrace{\alpha_{(0)}{\bf{1}}+\alpha_{(3)}e_1 e_2 e_3}_{\alpha} + \underbrace{\omega^1 e_1+\omega^2 e_2 + \omega^3 e_3}_{\omega''_{K-ex}}+ \underbrace{\omega^{23} e_2 e_3+\omega^{31} e_3 e_1+\omega^{12} e_1 e_2}_{\omega''_{d-ex}}$$
Here $\alpha_{(0)},\alpha_{(3)}$ are $\g$-valued coordinates on $\FF'=\g\otimes H^\bt(\C)[1]$ and $\{\omega^I\},\{\omega^{IJ}\}$ are $\g$-valued coordinates on $\FF''\subset g\otimes \C[1]$. The Lagrangian subspace (\ref{L_K}) is $\LL=\g\otimes\su(2)^*$.
The effective action on cohomology, as defined by the integral (\ref{W def by BV integral}), satisfies the ansatz (\ref{prop6 W}) with $F(\hbar)$ given by
\be F(\hbar)=\hbar\;\log\frac{\int_{\g\otimes \su(2)^*} e^{\frac{1}{\hbar}\left(\frac{1}{2}\sum_{I=1}^3 \pi_\g(\omega^I,\omega^I)+\pi_\g(\omega^1,[\omega^2,\omega^3]) \right)}d\omega^1 d\omega^2 d\omega^3}{\int_{\g\otimes \su(2)^*} e^{\frac{1}{\hbar}\cdot\frac{1}{2}\sum_{I=1}^3 \pi_\g(\omega^I,\omega^I)}d\omega^1 d\omega^2 d\omega^3} \label{F su(2) integral}\ee
The perturbative expansion (\ref{F pert expansion}) now reads
\begin{multline}
F(\hbar)=\hbar\log\left(\left.e^{-\hbar\;\frac{1}{2}\sum_{I=1}^3 \pi_\g^{-1}(\frac{\dd}{\dd\omega^I},\frac{\dd}{\dd\omega^I})}\right|_{\omega^I=0}\circ e^{\frac{1}{\hbar}\,\pi_\g(\omega^1,[\omega^2,\omega^3])}\right)=\\
=\sum_{l=2}^\infty (-1)^{l+1} \hbar^l \sum_{\Gamma^\mr{vac}\in G_{l,0}} \frac{1}{|\mr{Aut}(\Gamma^\mr{vac})|} L_{\Gamma^\mr{vac}}^{\su(2)}\cdot L_{\Gamma^\mr{vac}}^{\g}
\label{F su(2)}
\end{multline}
Here $L_{\Gamma^\mr{vac}}^\g$ denotes the ``Lie algebra graph'' (or the ``Jacobi graph''), i.e. the number\footnote{Strictly speaking, one also has to choose a cyclic ordering of half-edges for each vertex of $\Gamma^\mr{vac}$, and this choice affects the total sign of $L_{\Gamma^\mr{vac}}^\g$. But this ambiguity is cancelled in (\ref{F su(2)}) since the factors $L_{\Gamma^\mr{vac}}^\g$ and $L_{\Gamma^\mr{vac}}^{\su(2)}$ change their signs simultaneously if we change the cyclic ordering of half-edges in any vertex of $\Gamma^\mr{vac}$.} obtained by decorating vertices of $\Gamma^\mr{vac}$ with the structure constants $f_{abc}$ of the Lie algebra $\g$ and taking contraction of indices over edges of $\Gamma^\mr{vac}$. In particular, for $L_{\Gamma^\mr{vac}}^{\su(2)}$ vertices are decorated with structure costants of $\su(2)$ --- the Levi-Civita symbol\footnote{We assume that the inner product on $\su(2)$ is normalized as $\pi_{\su(2)}(x,y)=-2\, \tr(x y)$ (in the fundamental representation of $\su(2)$), so that the structure constants for the orthonormal basis are really $\epsilon_{IJK}$.} $\epsilon_{IJK}\in\{\pm 1,0\}$.
First terms in the series (\ref{F su(2)}) are:
$$
F(\hbar)
=-\hbar^2\,\frac{1}{12}\cdot 6\cdot f_{abc}f_{abc}+\hbar^3\left(\frac{1}{16}\cdot 12\cdot f_{acd}f_{bcd}f_{aef}f_{bef}+\frac{1}{24}\cdot 6\cdot f_{abc}f_{ade}f_{bef}f_{cfd}\right)+\cdots
$$
In particular, for $\g=\su(N)$ we have
$$F(\hbar)=-\hbar^2 \frac{1}{2}(N^3-N)+\hbar^3 \frac{7}{8} (N^4-N^2)-\hbar^4 \frac{23}{8} (N^5-N^3)+\cdots$$
As a side note, $L_{\Gamma^\mr{vac}}^{\su(2)}$ can be interpreted combinatorially as the number of ways to decorate edges of $\Gamma^\mr{vac}$ with 3 colors, such that in each vertex edges of all 3 colors meet (these decorations should be counted with signes, determined by the cyclic order of colors in each vertex). Thus for $\g=\su(2)$ the invariant $F(\hbar)$ is given by
$$F(\hbar)=\sum_{l=2}^\infty (-1)^{l+1} \hbar^l \sum_{\Gamma^\mr{vac}\in G_{l,0}} \frac{1}{|\mr{Aut}(\Gamma^\mr{vac})|} (L_{\Gamma^\mr{vac}}^{\su(2)})^2$$ and it can be viewed as a generating function for certain interesting combinatorial quantities (and on the other hand, there is an ``explicit'' integral formula (\ref{F su(2) integral}) for $F(\hbar)$ in terms of a 9-dimensional Airy-type integral).

\end{Exa}

\begin{proposition} \label{prop7}
Suppose $\C$ is formal and the embedding $\iota: H^\bt(\C)\hra \C$ is an algebra homomorphism, then:
\begin{itemize}
\item The tree part of the effective action on cohomology contains only the trivial $W_\mr{prod}$ term, i.e.
\be W(\alpha)=W_\mr{prod}(\alpha)+\hbar F^{(1)}(\alpha_{(1)}^1,\ldots,\alpha_{(1)}^{B_1})+\sum_{l=2}^\infty \hbar^l F^{(l)}(\alpha_{(1)}^1,\ldots,\alpha_{(1)}^{B_1}) \label{prop7 W}\ee
where $F^{(l)}\in (\Fun(\g^{B_1}))^\g$ for $l=1,2,\ldots$.
\item The 1-loop part of effective action can be written as
\be F^{(1)}(\alpha_{(1)})=\frac{1}{2}\;\Str_{\g\otimes\C}\log\left(1+K\circ l(\iota(\alpha_{(1)}),\bt)\right) \label{prop7 F^1 as Str}\ee
\item Restriction of the 1-loop part of the effective action $F^{(1)}|_\mr{MC}$ to the Maurer-Cartan set is invariant under deformations of $(\iota,K)$ (preserving the homomorphism condition for $\iota$). Here $\mr{MC}\subset H^1(\C,\g)[1]$ is given by
\[\sum_{j,k}\mu_{ijk}[\alpha_{(1)}^j,\alpha_{(1)}^k]=0.\]
\end{itemize}
\end{proposition}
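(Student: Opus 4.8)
The plan is to establish the three bullet points of Proposition~\ref{prop7} in order, exploiting the homomorphism hypothesis on $\iota$ at each stage. For the first bullet, I would argue that when $\iota$ is an algebra homomorphism, every nontrivial Feynman tree $\Gamma\in G_{0,n}$ with $n\geq 4$ vanishes. The reason is structural: the internal edges of a tree graph carry the propagator $K$, and the trivalent vertices carry the multiplication $m$. When $\iota$ intertwines the product on $H^\bt(\C)$ with that on $\C$, every product of two inserted cohomology representatives $\iota(e_p)m\,\iota(e_q)$ stays in $\im(\iota)$; since $K\circ\iota=0$ by (\ref{K iota=0}), the propagator on the adjacent internal edge annihilates this expression. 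Hence the only surviving tree is $\Gamma_{0,3}$, giving precisely $W_\mr{prod}$. Combined with the ghost-number/degree counting already used in Proposition~\ref{prop4}, which forces all higher contributions to depend only on $\alpha_{(1)}$, this yields (\ref{prop7 W}).

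For the second bullet, I would isolate the $l=1$ (one-loop) part of the perturbative expansion (\ref{W pert series}). At one loop the relevant graphs are single cycles (polygons) with leaves attached, and because products of representatives again land in $\im(\iota)$ and are killed by $K$, the only surviving one-loop graphs are pure cycles whose vertices each carry one leaf decorated by $\alpha_{(1)}$ and whose cycle edges carry $K$. Summing these polygon contributions is exactly the expansion of a $\log\det = \Str\log$; concretely, a cycle of length $n$ with insertions $l(\iota(\alpha_{(1)}),\bt)$ alternating with propagators $K$ contributes $\frac{(-1)^{n+1}}{n}\Str\big((K\circ l(\iota(\alpha_{(1)}),\bt))^n\big)$, and resumming over $n$ gives (\ref{prop7 F^1 as Str}). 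I would note the factor $\frac{1}{2}$ comes from the symmetry of non-oriented loops (the two orientations of each polygon are identified), matching the automorphism factor $|\mr{Aut}(\Gamma)|$.

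For the third bullet, the invariance of $F^{(1)}|_\mr{MC}$, I would use Proposition~\ref{prop5}: a deformation of $(\iota,K)$ transforms $W$ canonically, and at the level of the ansatz (\ref{W ansatz}) acts on $F$ by (\ref{prop5 G acts on F}), namely $F\mapsto F+G\circ(W_\mr{prod}^{111}+F)+\hbar\,\mr{div}(G)$. Extracting the order-$\hbar^1$ part, $F^{(1)}$ changes by $G_{(0)}\circ W_\mr{prod}^{111}+\mr{div}(G_{(0)})$, where $G_{(0)}$ is the tree (order $\hbar^0$) part of the equivariant vector field $G$. The key observation is that $G_{(0)}\circ W_\mr{prod}^{111}$ is the directional derivative of the cubic function $W_\mr{prod}^{111}(\alpha_{(1)})=\frac{1}{6}\sum\mu_{ijk}\,\pi_\g(\alpha_{(1)}^i,[\alpha_{(1)}^j,\alpha_{(1)}^k])$ along $G_{(0)}$, whose gradient vanishes precisely on the Maurer--Cartan locus $\sum_{j,k}\mu_{ijk}[\alpha_{(1)}^j,\alpha_{(1)}^k]=0$. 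Restricting to $\mr{MC}$, this term drops out, so $F^{(1)}|_\mr{MC}$ shifts only by $\mr{div}(G_{(0)})|_\mr{MC}$; I would then argue this divergence term is itself zero (or a pure constant that can be discarded), either because the equivariant polynomial vector field $G_{(0)}$ is divergence-free by the orthogonality/self-adjointness properties (\ref{K self-adj}) built into the allowed deformations, or because it is exact and hence vanishes upon restriction.

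The main obstacle I anticipate is the third bullet, specifically controlling the divergence term $\mr{div}(G_{(0)})$ and justifying rigorously that it does not obstruct invariance on $\mr{MC}$. The vanishing of $G_{(0)}\circ W_\mr{prod}^{111}$ on $\mr{MC}$ is clean since it is manifestly a gradient paired against the defining equations of $\mr{MC}$; but showing that the remaining divergence contribution is inert requires a careful analysis of how the self-adjoint, skew-symmetric structure of the deformations $\delta\kappa$ and $\delta I$ forces the generating vector field to preserve the Lebesgue measure on $\g^{B_1}$, i.e.\ forces $\mr{div}(G_{(0)})$ to vanish or to be constant. Establishing (\ref{prop7 F^1 as Str}) should be routine once the polygon resummation is set up correctly, and the first bullet is essentially a repetition of the vanishing argument from Proposition~\ref{prop4} with the homomorphism property supplying the extra $K\circ\iota=0$ cancellations.
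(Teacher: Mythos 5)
Your arguments for the first two bullets are correct and essentially the same as the paper's: the homomorphism property gives $K\circ l(\iota(\alpha),\iota(\alpha))=0$, which kills every diagram containing a vertex adjacent to two leaves and one internal edge; hence only $\Gamma_{0,3}$ survives among trees, and only wheels survive at one loop, whose resummation is the supertrace formula (\ref{prop7 F^1 as Str}).

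The genuine gap is in the third bullet, exactly where you anticipate trouble. First, your $\hbar$-order bookkeeping of (\ref{prop5 G acts on F}) is off: writing $G=\sum_{l\geq 0}\hbar^l G^{(l)}$ and $F=\sum_{l\geq 1}\hbar^l F^{(l)}$, the one-loop part transforms as
\[
F^{(1)}\mapsto F^{(1)}+G^{(1)}\circ W_\mr{prod}^{111}+G^{(0)}\circ F^{(1)}+\mr{div}\,G^{(0)},
\]
so the term pairing with $W_\mr{prod}^{111}$ at this order is $G^{(1)}$, not the tree part $G^{(0)}$ (the term $G^{(0)}\circ W_\mr{prod}^{111}$ is of order $\hbar^0$ and perturbs the tree part), and you have also dropped the cross term $G^{(0)}\circ F^{(1)}$, which does not vanish on $\mr{MC}$ for any obvious reason. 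Second and more seriously, your handling of the divergence term is speculative and both fallback mechanisms fail: an equivariant polynomial vector field on $\g^{B_1}$ is by no means automatically divergence-free, and a divergence is a function, so ``exactness'' has no bearing on its restriction to $\mr{MC}$; nor would a constant shift be harmless, since the Proposition asserts invariance of $F^{(1)}|_\mr{MC}$ on the nose.

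The missing idea---the paper's key observation---is that the homomorphism hypothesis, which by assumption is preserved by the allowed deformations of $(\iota,K)$, forces the \emph{entire tree part of the generator} $R'$ in (\ref{R' pert series}) to vanish: every tree diagram there contains one of the expressions $K\circ l(\iota(\alpha),\iota(\alpha))$, $\delta\kappa\circ l(\iota(\alpha),\iota(\alpha))$, or $\pi(\delta I(\alpha),l(\iota(\alpha),\iota(\alpha)))$, each of which is zero when $\iota$ is an algebra homomorphism. Hence $G^{(0)}=0$, so the divergence term $\mr{div}\,G^{(0)}$ and the cross term $G^{(0)}\circ F^{(1)}$ drop out identically, and the transformation reduces to $F^{(1)}\mapsto F^{(1)}+G^{(1)}\circ W_\mr{prod}^{111}$, which---as you correctly observe---vanishes upon restriction to the critical locus $\mr{MC}$ of $W_\mr{prod}^{111}$. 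With this one lemma your outline closes; without it, the step you flag as the main obstacle remains an unfilled hole.
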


\begin{proof}
The fact that $\iota$ is a homomorphism implies
\be K\circ l(\iota(\alpha),\iota(\alpha))=0 \label{prop7 eq1}\ee
This means that all Feynman diagrams for $W(\alpha)$ that contain a vertex adjacent to two leaves and one internal edge, vanish. In particular, all tree diagrams except $\Gamma_{0,3}$ vanish. Together with Proposition \ref{prop4} this implies (\ref{prop7 W}). Also (\ref{prop7 eq1}) implies that among 1-loop diagrams only ``wheels'' survive, and they are summed up to form (\ref{prop7 F^1 as Str}) in standard way.

The next observation is that the tree part $R'^0$ of the generator of the canonical transformation (\ref{R' pert series}) induced by changing $(\iota,K)$ vanishes. This is a consequence of properties (\ref{prop7 eq1}), $\delta\kappa\circ l(\iota(\alpha),\iota(\alpha))=0$ and $\pi(\delta I(\alpha),l(\iota(\alpha),\iota(\alpha)))=0$ (which all follow from the fact that $\iota$ is a homomorphism). In terms of the vector field $G$ (\ref{prop5 R'}), we have $G=\sum_{l=1}^\infty \hbar^l G^{(l)}$ and the $l$-loop part of effective action is transformed according to (\ref{prop5 G acts on F}) as
$$F^{(l)}\mapsto F^{(l)}+G^{(l)}\circ W_\mr{prod}^{111}+\sum_{i=1}^{l-1} G^{(l-i)}\circ F^{(i)}+\mr{div}\; G^{(l-1)}$$
In particular, the 1-loop part is transformed as
$$F^{(1)}\mapsto F^{(1)}+G^{(1)}\circ W_\mr{prod}^{111}$$
Since the Maurer-Cartan set is precisely the locus of stationary points of $W_\mr{prod}^{111}$, the restriction $F^{(1)}|_\mr{MC}$ is invariant. This finishes the proof.
\end{proof}

Special case of formal $\C$ is the case $B_1=1$. Here the Maurer-Cartan set is $\mr{MC}=H^1(\C,\g)[1]\cong \g$, so the 1-loop part of effective action $F^{(1)}\in \Fun(\g)^\g$ is invariant (without any restriction).

\subsection{Comments on relaxing the condition $K^2=0$ for chain homotopy}\label{s:relax}
Let us introduce the notation
\begin{multline}\label{Ind}
\mr{Ind}_{\iota,K}(\bar S)(\alpha):=\hbar\log\left(e^{-\hbar\;\frac{1}{2}\pi^{-1}(\frac{\dd}{\dd\omega''},K\frac{\dd}{\dd\omega''})}|_{\omega''=0}\circ e^{(\bar{S}(\iota(\alpha)+\omega'')-S_0(\omega''))/\hbar}\right)\\
\in \Fun(\FF')[[\hbar]]
\end{multline}
for the ``effective action'' for some (not necessarily abstract Chern--Simons) action $\bar{S}\in \Fun(\FF)[[\hbar]]$, defined by perturbation series, rather than by BV integral itself. Expression (\ref{Ind}) is indeed the perturbation series generated by the BV integral
\be e^{\mr{Ind}_{\iota,K}(\bar S)(\alpha)/\hbar}=\frac{1}{N}\int_{\LL_K}e^{\bar{S}(\iota(\alpha)+\omega'')/\hbar}\mu_{\LL_K} \label{Ind via BV integral}\ee
with normalization $N$ as before (\ref{N}). In particular for abstract Chern--Simons action $\bar{S}=S$ we recover the definition of $W$:\quad $\mr{Ind}_{\iota,K}(S)(\alpha)=W(\alpha)$.

The important observation with which we are concerned here is that definition (\ref{Ind}) makes sense for a chain homotopy $K$ not necessarily satisfying property $K^2=0$ (we assume that the other properties we demanded (\ref{dK+Kd},\ref{K self-adj},\ref{K iota=0}) hold), while the definition via BV integral (\ref{Ind via BV integral}) is less general and uses essentially $K^2=0$ property for construction of Lagrangian subspace $\LL_K$. To avoid confusion we will denote by $\hat{K}$ the chain homotopy without property (\ref{K^2=0}) and reserve notation $K$ for the ``honest'' chain homotopy with property (\ref{K^2=0}). We will call the effective action defined via (\ref{Ind}) with $\hat{K}$ as chain homotopy the ``relaxed'' effective action, while for an honest chain homotopy $K$ we call the effective action (defined equivalently by (\ref{Ind}) or by BV integral (\ref{Ind via BV integral})) ``strict''.

\begin{proposition} \label{prop8}
Let $\hat{K}:\C^\bt\ra\C^{\bt-1}$ be a chain homotopy satisfying properties (\ref{dK+Kd},\ref{K self-adj},\ref{K iota=0}), but with $\hat{K}^2\neq 0$, and let $K=\hat{K} d \hat{K}$ be the construction (\ref{K_3=KdK}) applied to $\hat{K}$ (i.e.\ $K$ satisfies all the properties (\ref{dK+Kd},\ref{K self-adj},\ref{K iota=0},\ref{K^2=0})). Then the relaxed effective action $\mr{Ind}_{\iota,\hat{K}}(S)$ is equivalent (i.e.\ connected by a canonical transformation) to the strict effective action $\mr{Ind}_{\iota,K}(S)$.
\end{proposition}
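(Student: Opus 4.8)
The plan is to join $\hat{K}$ to $K=\hat{K}d\hat{K}$ by a straight path of self-adjoint chain homotopies and to show that the relaxed effective action moves along this path only by infinitesimal canonical transformations, which then integrate to a single finite one. First I would record the algebra of $K=\hat{K}d\hat{K}$. From (\ref{dK+Kd}) for $\hat{K}$ together with $d\iota=0$ (whence $\PP'd=d\PP'=0$, since $\iota^T d=0$ and $d\iota=0$) one gets $d\hat{K}d=d$; hence $dK=d\hat{K}d\,\hat{K}=d\hat{K}$ and $Kd=\hat{K}\,d\hat{K}d=\hat{K}d$, so $K$ again satisfies (\ref{dK+Kd}). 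The same identities give $H\hat{K}-\hat{K}H=0$ with $H=\id_\C-\PP'$, i.e.\ $[d,\hat{K}^2]=0$, so $d\hat{K}^2 d=\hat{K}^2 d^2=0$ and therefore $K^2=\hat{K}(d\hat{K}^2 d)\hat{K}=0$, which is (\ref{K^2=0}); self-adjointness (\ref{K self-adj}) and $K\iota=0$ are inherited from $\hat{K}$. Thus the strict side $\mr{Ind}_{\iota,K}(S)$ is legitimately defined by the honest BV integral (\ref{Ind via BV integral}), while the relaxed side $\mr{Ind}_{\iota,\hat{K}}(S)$ is defined only by the series (\ref{Ind}).

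Next I would set $K_t=(1-t)\hat{K}+tK$ for $t\in[0,1]$. Being an affine combination of maps obeying (\ref{dK+Kd}), (\ref{K self-adj}) and $\,\cdot\,\iota=0$, each $K_t$ obeys these three, though generically $K_t^2\neq 0$; so $\mr{Ind}_{\iota,K_t}(S)$ is a relaxed effective action interpolating the two endpoints, with constant velocity $\dot{K}_t=K-\hat{K}=-\hat{K}^2 d=-d\hat{K}^2$.

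The core step is to show that $\tfrac{d}{dt}\mr{Ind}_{\iota,K_t}(S)$ is an infinitesimal canonical transformation; in the exponential form (\ref{can transf, exp form}) this reads $\tfrac{d}{dt}e^{W_t/\hbar}=\Delta'(e^{W_t/\hbar}R'_t)$ with $W_t:=\mr{Ind}_{\iota,K_t}(S)$. Writing $\mathcal{O}_A:=-\tfrac{\hbar}{2}\pi^{-1}(\tfrac{\dd}{\dd\omega''},A\tfrac{\dd}{\dd\omega''})$ and $P_{K_t}:=e^{\mathcal{O}_{K_t}}$, differentiating (\ref{Ind}) brings down the insertion operator $\mathcal{O}_{\dot{K}}$ acting on $P_{K_t}e^{S_\mr{int}/\hbar}$. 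The decisive observation is that, although $\dot{K}$ is not of the ``type~I'' form $[d,\delta\kappa]$ used in Section~\ref{sec 2.3} (indeed $[d,\hat{K}^2]=0$), the operator $\mathcal{O}_{\dot{K}}$ is nevertheless, up to a numerical factor, the commutator of the free-action vector field $\{S_0,\bt\}$ with the second-order operator $\mathcal{O}_{\hat{K}^2}$: the commutator puts $d$ on both $\tfrac{\dd}{\dd\omega''}$ slots, and since $d\hat{K}^2=\hat{K}^2 d$ the two contributions add up to $\mathcal{O}_{\hat{K}^2 d}$ rather than cancel. Because $S_0$ solves the QME on $\FF''$, one has $e^{-S_0/\hbar}\Delta''e^{S_0/\hbar}=\Delta''+\tfrac{1}{\hbar}\{S_0,\bt\}$, so running this commutator through $P_{K_t}$ and the exponential converts the whole insertion into a $\Delta''$-coboundary plus a $\Delta'$-exact term. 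The $\Delta''$-coboundary dies upon restriction to $\omega''=0$ exactly as in Lemma~\ref{lm1} and Proposition~\ref{prop2} (the BV-Stokes mechanism, now applied at the level of the series rather than a Lagrangian integral), and what remains is $\Delta'(e^{W_t/\hbar}R'_t)$ with generator $R'_t=e^{-W_t/\hbar}[P_{K_t}(\theta\,e^{S_\mr{int}/\hbar})]|_{\omega''=0}$, where $\theta$ is the ``marked internal edge'' decorated by the bivector built from $\hat{K}^2$ --- the direct analogue of the Feynman rule in (\ref{R' via BV integral})--(\ref{R' pert series}). As in Proposition~\ref{prop5}, $R'_t$ is linear in $\alpha_{(2)}$ and carries no constant part, so the transformation is genuine, with no constant shift.

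Finally I would integrate: the $t$-ordered composition of these infinitesimal canonical transformations over $[0,1]$ is a finite canonical transformation carrying $\mr{Ind}_{\iota,\hat{K}}(S)$ to $\mr{Ind}_{\iota,K}(S)$, which is the claim. I expect the main obstacle to be precisely the core step: since there is no honest Lagrangian $\LL_{K_t}$ for $t\in(0,1)$, one cannot simply invoke the BV-Stokes theorem for integrals over Lagrangians as in Section~\ref{sec: eff BV action}; the identity $\mathcal{O}_{\dot{K}}\propto[\{S_0,\bt\},\mathcal{O}_{\hat{K}^2}]$ and the conversion of the resulting $\{S_0,\bt\}$-insertion into a vanishing $\Delta''$-coboundary must be carried out directly inside the perturbative definition (\ref{Ind}), with careful bookkeeping of signs, degrees and the normalization $N$.
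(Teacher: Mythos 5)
Your preliminary algebra is correct, and in fact it reproduces the paper's own structural observation: from $d\hat{K}d=d$ and $[d,\hat{K}^2]=0$ one gets $\hat{K}-K=\hat{K}^2d=d\hat{K}^3d$, so your straight path $K_t=(1-t)\hat{K}+tK=K+(1-t)\,d\Lambda d$ with $\Lambda=\hat{K}^3$ is precisely the deformation (\ref{K+d Lambda d}), and each $K_t$ satisfies $dK_t+K_td=\id_\C-\PP'$, (\ref{K self-adj}) and $K_t\iota=0$. The gap is your core step. You need $\frac{d}{dt}e^{W_t/\hbar}$ to be $\Delta'$-exact, and you justify the disappearance of the $\Delta''$-coboundary by saying it dies ``exactly as in Lemma~\ref{lm1} and Proposition~\ref{prop2}.'' But those results are proved by the BV-Stokes theorem for integrals over an honest Lagrangian subspace, and for $t\in(0,1)$ one has $K_t^2\neq 0$, so $\im(K_t)$ is not Lagrangian and no such integral exists --- this is exactly the obstruction that makes the proposition nontrivial, as you yourself note in your closing paragraph. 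What would actually have to be proved is an operator-level substitute for BV-Stokes valid for kernels satisfying only (\ref{dK+Kd},\ref{K self-adj},\ref{K iota=0}), e.g.\ an identity of the type $\left[P_{K_t}\bigl(\hbar\Delta''g+\{S_0,g\}\bigr)\right]\big|_{\omega''=0}=0$ for all $g$, together with the sign computation showing that $[\{S_0,\bt\},\mathcal{O}_{\hat{K}^2}]$ produces the anticommutator $d\hat{K}^2+\hat{K}^2d=2\hat{K}^2d$ rather than the (vanishing) commutator, and then the bookkeeping that organizes the surviving terms into $\Delta'(e^{W_t/\hbar}R'_t)$, including the cross-terms of the antibracket between $\FF'$ and $\FF''$ coming from $S_\mr{int}(\alpha,\omega'')$. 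You explicitly defer all of this (``must be carried out directly \dots with careful bookkeeping''), so the decisive point of the proof is asserted, not established.

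For comparison, the paper circumvents this difficulty entirely by a resummation rather than a differentiation: writing $\hat{K}=K+d\Lambda d$, it absorbs every edge decorated by $d\Lambda d$ into new vertices, obtaining the identity $\mr{Ind}_{\iota,\hat{K}}(S)=\mr{Ind}_{\iota,K}(S+\Phi_\Lambda)$ of (\ref{prop8 eq1}), where $\Phi_\Lambda$ collects the surviving ``branch'' trees and ``wheel'' one-loop graphs (\ref{Phi_Lambda}). This reduces the relaxed induction to a \emph{strict} one, for which the honest Lagrangian $\LL_K$, Proposition~\ref{prop1} and Lemma~\ref{lm1} are available; the only remaining task is to exhibit an explicit homotopy $S_\Lambda(t,dt)=S+\Phi_{t\Lambda}+dt\,\frac{1}{t}R_{t\Lambda}$ solving the extended QME (\ref{extended QME}), which shows $S$ and $S+\Phi_\Lambda$ are canonically equivalent, and then push this equivalence through the strict BV integral. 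If you want to salvage your route, the cleanest repair is to prove the operator identity above once and for all (it should follow from $[\{S_0,\bt\},\mathcal{O}_{A}]\propto\mathcal{O}_{dA+Ad}$, from $dK_t+K_td=\id_\C-\PP'$, and from the fact that the vector field $\{S_0,\bt\}$ has coefficients linear in $\omega''$ and hence kills everything at $\omega''=0$); alternatively, adopt the paper's resummation, after which nothing about non-Lagrangian ``gauge fixing'' needs to be proved at all.
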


\begin{proof}
The first observation is that since $K=\hat{K} d \hat{K}$, we can write the relaxed chain homotopy as
\be \hat{K}=K+d \Lambda d \label{K+d Lambda d}\ee
where $\Lambda=\hat{K}^3: \C^3\ra \C^0$. In fact, formula (\ref{K+d Lambda d}), with arbitrary skew-symmetric, degree -3 linear map $\Lambda: \C^3\ra \C^0$, gives a general (finite) deformation of honest chain homotopy $K$, preserving properties (\ref{dK+Kd},\ref{K self-adj},\ref{K iota=0}), but violating (\ref{K^2=0}) and satisfying in addition $K=\hat{K} d \hat{K}$. In other words, deformation (\ref{K+d Lambda d}) is the inverse of projection (\ref{K_3=KdK}).

Second, we interpret the Feynman diagram decomposition for $\mr{Ind}_{\iota,\hat{K}}$ with propagator $\hat{K}$ given by (\ref{K+d Lambda d}) as sum over graphs with edges decorated either by $K$ or by $d\Lambda d$, and then raise the Feynman subgraphs with edges decorated only by $d\Lambda d$ into action. I.e.\ we obtain
\be \mr{Ind}_{\iota,\hat{K}}(S)=\mr{Ind}_{\iota,K}(S+\Phi_\Lambda) \label{prop8 eq1}\ee
where
\begin{multline}\Phi_\Lambda=\frac{1}{8}\pi(l(\omega,\omega), d\Lambda d\; l(\omega,\omega))+\frac{1}{8}\pi(l(\omega,\omega),d\Lambda d\; l(\omega,d\Lambda d\; l(\omega,\omega)))+\\
+\frac{1}{8}\pi(l(\omega,\omega),d\Lambda d\; l(\omega,d\Lambda d\; l(\omega,d\Lambda d\; l(\omega,\omega))))+\cdots+\\
+\hbar\; \frac{1}{2}\cdot\frac{1}{2} \Str_{\FF}\;d\Lambda d\;l(\omega,d\Lambda d\;l(\omega,\bt))+\hbar\;\frac{1}{2}\cdot\frac{1}{3} \Str_{\FF}\;d\Lambda d\;l(\omega,d\Lambda d\;l(\omega,d\Lambda d\;l(\omega,\bt)))+\cdots
\label{Phi_Lambda}
\end{multline}
Only the simplest trees (``branches'') and only the simplest one-loop diagrams (``wheels'') contribute to $\Phi_\Lambda$, because any diagram with 3 incident internal edges decorated by $d\Lambda d$ automatically vanishes due to
$$\pi(d\Lambda d(\cdots),l(d\Lambda d(\cdots),d\Lambda d(\cdots)))=0$$
(which is implied by Leibniz identity in $\g\otimes \C$, by $d^2=0$ and skew-symmetry of $d$).

Third, we notice that there is a canonical transformation from $S$ to $S+\Phi_\Lambda$. A convenient way to describe a finite canonical transformation is to present a ``homotopy''
$$S_\Lambda(t,dt)=S_{\Lambda}(t)+dt\cdot R_\Lambda(t) \in \Fun(\FF)[[\hbar]]\otimes \Omega^\bt ([0,1])$$ --- a differential form on interval $[0,1]$ with values in functions on $\FF$ ($t\in[0,1]$ is a coordinate on the interval), satisfying the QME on the extended space $\FF\oplus T[1]([0,1])$:
\be (d_t+\hbar\Delta) S_\Lambda(t,dt)+\frac{1}{2}\{S_\Lambda(t,dt),S_\Lambda(t,dt)\}=0 \label{extended QME}\ee
(where $d_t=dt \frac{\dd}{\dd t}$ is the de Rham differential on the interval), and satisfying boundary conditions
$$S_\Lambda(t)|_{t=0}=S,\quad S_\Lambda(t)|_{t=1}=S+\Phi_\Lambda$$
The extended QME is equivalent to the fact that $S_\Lambda(t)$ is a solution to the QME on $\FF$ for any given $t\in[0,1]$ plus the fact that $S_\Lambda(t+\delta t)$ is obtained from $S_\Lambda(t)$ by the infinitesimal canonical transformation with generator $\delta t\cdot R_\Lambda(t)$.
In our case it is a straightforward exercise
to present the desired homotopy between $S$ and $S+\Phi_\Lambda$:
$$S_\Lambda(t,dt)=S+\Phi_{t\Lambda}+dt \frac{1}{t}R_{t\Lambda}$$
where $\Phi_{t\Lambda}$ is defined by (\ref{Phi_Lambda}) with $\Lambda$ rescaled by $t$, and $R_\Lambda$ is given by
\begin{multline*}
R_\Lambda=\frac{1}{4}\pi(d\Lambda d\;\omega,l(\omega,\omega))+\frac{1}{4} \pi(d\Lambda d\;\omega,l(\omega,d\Lambda d\;l(\omega,\omega)))+\\
+\frac{1}{4} \pi(d\Lambda d\;\omega,l(\omega,d\Lambda d\;l(\omega,d\Lambda d\;l(\omega,\omega))))+\cdots
\end{multline*}

We showed that $S$ and $S+\Phi_\Lambda$ are connected by a homotopy. Due to Proposition \ref{prop1} and Lemma \ref{lm1} the effective actions $\mr{Ind}_{\iota,K}(S)$ and $\mr{Ind}_{\iota,K}(S+\Phi_\Lambda)$ are also connected by a homotopy defined by \be e^{W_{\Lambda}(t,dt)(\alpha)/\hbar}=\frac{1}{N}\int_{\LL_K}e^{S_\Lambda(t,dt)(\iota(\alpha)+\omega'')/\hbar}\mu_{\LL_K} \label{W_Lambda via BV integral}\ee
Together with (\ref{prop8 eq1}) this finishes the proof.
\end{proof}

\begin{Rem} An immediate consequence of Proposition \ref{prop8} is that the relaxed effective action $\mr{Ind}_{\iota,\hat{K}}(S)$ satisfies the QME on $\FF'$.
\end{Rem}

\begin{Rem} Expression (\ref{Phi_Lambda}) suggests that it is itself the value of a certain Gaussian integral. Namely, the restriction of $S+\Phi_\Lambda$ to the subspace $\iota(\FF')\oplus\LL_K\subset \FF$ can be written as a fiber integral over fibers $\g\otimes \C_{d-ex}^1[1]$ of vector bundle $\iota(\FF')\oplus\LL_K\oplus \g\otimes \C_{d-ex}^1[1]\ra \iota(\FF')\oplus\LL_K$ (which is a sub-bundle of $\FF\ra \iota(\FF')\oplus\LL_K$):
$$\left. e^{(S+\Phi_\Lambda)/\hbar}\right|_{\iota(\FF')\oplus\LL_K}=\int_{\g\otimes \C_{d-ex}^1[1]}e^{(S+\frac{1}{2}\pi(\omega,K\Lambda^{-1}K\omega))/\hbar}\mu_{\g\otimes \C_{d-ex}^1[1]}$$
(by $\mu_{(\cdots)}$ we always mean the Lebesgue measure on the vector space). Here we assume for simplicity that $\Lambda: \C^3\ra \C^0$ is an isomorphism, and we denote $\Lambda^{-1}:\C^0\ra \C^3$ its inverse. Now we can write the relaxed effective action as
\be e^{\mr{Ind}_{\iota,\hat{K}}(S)(\alpha)/\hbar}=\int_{\LL_K\oplus \g\otimes \C_{d-ex}^1[1]} e^{S(\iota(\alpha)+\omega'')/\hbar}\;e^{\frac{1}{2\hbar}\pi(\omega'',K\Lambda^{-1}K\omega'')} \mu_{\LL_K\oplus \g\otimes \C_{d-ex}^1[1]} \label{thick BV integral}\ee
where we integrate over the coisotropic subspace $\LL_K\oplus \g\otimes \C_{d-ex}^1[1]\subset\FF''$ instead of just the Lagrangian subspace $\LL_K\subset\FF''$, with measure $e^{\frac{1}{2\hbar}\pi(\omega,K\Lambda^{-1}K\omega)} \mu_{\LL_K\oplus \g\otimes \C_{d-ex}^1[1]}$ that is constant in direction of $\LL_K$ and is Gaussian in direction of $\g\otimes \C_{d-ex}^1[1]$. So expression (\ref{thick BV integral}) gives an elegant interpretation of the relaxed effective action via a ``thick'' fiber BV integral (over a Gaussian-smeared Lagrangian subspace). In the case of $\Lambda$ of general rank (not necessarily an isomorphism), we should replace $C_{d-ex}^1$ by $\mr{im}(d\Lambda)\subset C_{d-ex}^1$ in this discussion, which leads to a thick fiber BV integral over a smaller coisotropic $\LL_K\oplus \g\otimes \mr{im}(d\Lambda)[1]\subset\FF''$.
\end{Rem}

\subsubsection{Invariants from the relaxed effective action} \label{invreleffact}
We are interested in describing the invariants of the relaxed effective action $\Ind_{\iota,\hat{K}}(S)$ modulo deformations of the ``relaxed induction data'' $(\iota,\hat{K})$. Since we have a general description (\ref{K+d Lambda d}) for a non-strict chain homotopy, a general (infinitesimal) deformation of the relaxed induction data $(\iota,\hat{K})$ can be described as a deformation of the underlying strict induction data $(\iota,K)$ studied in the beginning of Section~\ref{sec 2.3}, plus a deformation of $\Lambda$. Now we can restate some weak version of Proposition \ref{prop6} for the case of relaxed induction, where we make a special choice for the Lie algebra of coefficients: $\g=\su(2)$ (we generalize this in the Remark afterwards). We are able to recover only the two-loop part of the complete invariant $F(\hbar)$ of the strict effective action in this discussion.

\begin{proposition} \label{prop9}
If $B_1=0$ and $\g=\su(2)$, the relaxed effective action on cohomology has the form
\be \Ind_{\iota,\hat{K}}(S)(\alpha)=A(\hbar)\cdot W_\mr{prod}^{003}(\alpha)+B(\hbar) \label{A W_prod + B}\ee
where  $W_\mr{prod}^{003}=\frac{1}{2}\sum_{a,b,c}\epsilon_{abc}\alpha_{(0)}^a \alpha_{(0)}^b \alpha_{(3)}^c$, and $A(\hbar)= 1+\hbar A^{(1)}+\hbar^2 A^{(2)}+\cdots$,\quad $B(\hbar)=\hbar^2 B^{(2)}+ \hbar^3 B^{(3)}+\cdots$ are some $\hbar$-dependent constants. The number
\be F^{(2)}=3 A^{(1)}+B^{(2)} \label{prop9 3A^1+B^2}\ee
is invariant under deformations of the relaxed induction data $(\iota,\hat{K})$.
\end{proposition}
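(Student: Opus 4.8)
The plan is to first fix the shape (\ref{A W_prod + B}) of the relaxed effective action and then to determine exactly how the pair of constants $(A,B)$ can move under deformations of $(\iota,\hat K)$. For the ansatz I argue as in Propositions \ref{prop4} and \ref{prop6}, now exploiting $\g=\su(2)$. Each Feynman graph value is a $\g$\ndash invariant, ghost\ndash $0$ polynomial in $\alpha_{(0)}$ (ghost $1$, odd) and $\alpha_{(3)}$ (ghost $-2$, even); writing $n_0,n_3$ for the numbers of $\alpha_{(0)}$\ndash and $\alpha_{(3)}$\ndash leaves, ghost number $0$ forces $n_0=2n_3$, while anticommutativity of $\alpha_{(0)}$ together with $\dim\su(2)=3$ forces $n_0\le 3$. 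Hence $(n_0,n_3)\in\{(0,0),(2,1)\}$; the unique invariant of type $(2,1)$ is $W_\mr{prod}^{003}$ and the type $(0,0)$ piece is a constant, which gives (\ref{A W_prod + B}). The normalizations are then immediate: the only $3$\ndash leaf tree is $\Gamma_{0,3}$ (one vertex, no propagator), so the classical coefficient of $W_\mr{prod}^{003}$ is exactly $1$, i.e.\ $A=1+O(\hbar)$; and the smallest connected vacuum graph is the two\ndash loop one, so $B=O(\hbar^2)$.

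Next I read off the admissible canonical transformations. By Proposition \ref{prop8} the relaxed action is canonically equivalent to the strict one $W_\mr{prod}^{003}+F$ of Proposition \ref{prop6}, and an infinitesimal deformation of $(\iota,\hat K)$ acts by $S'\mapsto S'+\{S',R'\}'+\hbar\Delta'R'$ (Lemma \ref{lm1} and Proposition \ref{prop2}; type III is absent here) with $R'$ of ghost number $-1$ and $\g$\ndash invariant. The same counting leaves only two monomials, $R'=c\,\pi_\g(\alpha_{(0)},\alpha_{(3)})+c'\,U$, with $U=\pi_\g(\alpha_{(0)},[\alpha_{(0)},\alpha_{(0)}])\,\pi_\g(\alpha_{(3)},\alpha_{(3)})$ the type $(3,2)$ invariant. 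A short computation shows the first monomial rescales $A\mapsto A(1+c)$ and shifts $B$ in a locked way governed by the supertrace $\Delta'\pi_\g(\alpha_{(0)},\alpha_{(3)})=-\dim\su(2)$, whereas the second leaves $B$ fixed but changes $A$ at order $\hbar$, since $\{W_\mr{prod}^{003},U\}'=0$ (it falls in the vanishing type $(4,2)$) while $\Delta'U\propto W_\mr{prod}^{003}$.

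The crux, and the step I expect to be the main obstacle, is that the dangerous monomial $U$ must not be generated classically: if $c'$ had an $\hbar^0$ term it would move $A^{(1)}$ without moving $B^{(2)}$ and so destroy the invariance of $3A^{(1)}+B^{(2)}$. I would prove the vanishing of the classical type $(3,2)$ generator directly on Feynman trees. Such a tree has five leaves---three units $\iota(e_{(0)})=1$ and two representatives $\iota(e_{(3)})=v$, one possibly carrying a $\delta I$ mark---and three trivalent vertices forming a path, so some end vertex carries two leaves. Using $\hat K\iota=0$ (hence $\hat K(1)=\hat K(v)=0$), $m(v,v)\in\C^{6}=0$, $\delta I(e_{(0)})=0$ on degree grounds, $\delta I(e_{(3)})\in\C_{K-ex}$, and $d1=dv=0$ (so that a marked edge $d\,\delta\Lambda\,d$ or $\delta\kappa$ also annihilates $1$ and $v$), one checks case by case that the contribution of any such end vertex vanishes; this kills all type $(3,2)$ trees and gives $c'^{(0)}=0$.

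It remains to assemble the constants. Since $A^{(0)}=1$ for \emph{every} choice of relaxed induction data, the classical part of the $(1,1)$ generator vanishes too, $c^{(0)}=0$, so $c=\hbar c^{(1)}+\cdots$ and $c'=\hbar c'^{(1)}+\cdots$. At the relevant orders this yields $\delta A^{(1)}=c^{(1)}$ and $\delta B^{(2)}=-\dim\su(2)\cdot c^{(1)}=-3c^{(1)}$, whence $\delta(3A^{(1)}+B^{(2)})=0$; invariance under infinitesimal deformations extends to arbitrary ones because the space of induction data is connected. Finally, evaluating the combination on the strict representative $A=1$, $B=F=\hbar^2F^{(2)}+\cdots$ of Proposition \ref{prop6} identifies it with the genuine invariant, $3A^{(1)}+B^{(2)}=F^{(2)}$.
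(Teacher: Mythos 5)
You have the right skeleton: the ansatz argument is sound, your two invariant monomials $\pi_\g(\alpha_{(0)},\alpha_{(3)})$ and $U$ are exactly the two $dt$\ndash terms in the paper's homotopy ansatz \eqref{prop9 W_Lambda ansatz}, and you correctly isolated the crux, namely that the classical part of the type\ndash$(3,2)$ generator must vanish, $c'^{(0)}=0$. The genuine gap is in your proof of precisely that point, and it sits exactly where the relaxation bites. Your case analysis kills the end vertex carrying the marked leaf together with a unit by invoking $\delta I(e_{(3)})\in\C_{K-ex}$; but this fact helps only if the propagator annihilates $\C_{K-ex}$, i.e.\ only if the chain homotopy squares to zero --- which is the very hypothesis that has been dropped. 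Writing $\hat K=K+d\Lambda d$ as in \eqref{K+d Lambda d}, one has $\hat K\,\delta I(e_{(3)})=d\Lambda d\,\delta I(e_{(3)})\neq 0$ in general, so the tree with end vertex $\{$marked leaf, $1\}$ survives past that vertex and your case-by-case argument breaks. There is also a structural problem behind this: marked trees with $\hat K$ on the unmarked edges are not known to compute the generator at all. The marked-diagram formula of Proposition~\ref{prop5} is derived from the fiber BV integral, which does not exist for $\hat K$; what such trees compute is the first-order change of $\Ind_{\iota,\hat K}(S)$ itself, not $R'$. And at tree level that change is blind to $c'^{(0)}$, for the very reason you use elsewhere: since $\{W_\mr{prod}^{003},U\}'=0$, the order-$\hbar^0$ variation equals $c^{(0)}\{W_\mr{prod}^{003},\pi_\g(\alpha_{(0)},\alpha_{(3)})\}'$ and contains no trace of $c'^{(0)}$. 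So even a complete tree-level vanishing argument would only re-prove $c^{(0)}=0$ (which you already get from $A^{(0)}\equiv 1$), not the statement you need.

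To complete your route you would have to compute the generator where it is actually defined, i.e.\ through Proposition~\ref{prop8}: $\Ind_{\iota,\hat K}(S)=\Ind_{\iota,K}(S+\Phi_\Lambda)$, so a deformation of $(\iota,\hat K)$ is a deformation of the strict data $(\iota,K)$ (Proposition~\ref{prop2} and Lemma~\ref{lm1} applied to the action $S+\Phi_\Lambda$) plus a deformation of $\Lambda$ (the generator $R_\Lambda$ pushed down by Lemma~\ref{lm1}). In that representation the propagator is the strict $K$, so $K\,\delta I(e_{(3)})=0$ does hold, but the diagrams then contain the composite $\Phi_\Lambda$\ndash vertices and $\delta_\Lambda R_\Lambda$\ndash insertions, none of which appear in your case list. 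This extra bookkeeping is exactly what the paper's proof is built to avoid: it never classifies relaxed deformation generators. It fixes the data, takes the homotopy $W_\Lambda(t,dt)$ of \eqref{W_Lambda via BV integral} between strict and relaxed actions --- a computation living entirely in the strict framework, where the tree-vanishing argument (formality, $K\circ\iota=0$, $d\circ\iota=0$) is available --- turns the extended QME into the ODEs \eqref{prop9 eq1}--\eqref{prop9 eq2}, concludes that $3A^{(1)}(t)+B^{(2)}(t)$ is $t$\ndash independent and hence equal to $F^{(2)}$, and then obtains invariance under all deformations of $(\iota,\hat K)$ for free from Proposition~\ref{prop6}. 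Your direct approach could in principle be repaired along these lines, but as written its key step fails.
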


\begin{proof}
Ansatz (\ref{A W_prod + B}) follows from the following argument. Since the relaxed effective action $\hat{W}=\Ind_{\iota,\hat{K}}(S)\in \Fun\left(\g[1]\oplus \g[-2]\right)[[\hbar]]$ is a function of ghost number zero and $\g=\su(2)$ is 3-dimensional (and hence an at most cubic dependence on $\alpha_{(0)}$ is possible), it has to be of
the form
$$\hat{W}(\alpha)=\frac{1}{2}\sum_{a,b,c}A_{abc}(\hbar)\alpha_{(0)}^a \alpha_{(0)}^b \alpha_{(3)}^c+B(\hbar)$$
Since $\hat{W}$ also inherits the invariance under adjoint action of $\G=SU(2)$:\quad $\alpha\mapsto g\alpha g^{-1}$ for $g\in\G$ from the ad-invariance of the original abstract Chern--Simons action $S$, the tensor $A_{abc}(\hbar)$ has to be of the form $A_{abc}(\hbar)=A(\hbar)\epsilon_{abc}$. This proves (\ref{A W_prod + B}). The fact that the series for $A(\hbar)$ starts as $A(\hbar)=1+O(\hbar)$ is due to the vanishing of all tree diagrams $\Gamma\neq \Gamma_{0,3}$ which follows from formality of $\C$ (it is automatic for the $B_1=0$ case) and $\hat{K}\circ\iota=0$. The series for $B(\hbar)$ starts with an $O(\hbar^2)$-term just because $G_{l,0}$ is empty for $n=0,1$.

Next, we know from Proposition \ref{prop8} that there is a homotopy $W_\Lambda(t,dt)(\alpha)$ connecting the strict effective action $W=\Ind_{\iota,K}(S)=W^{003}_\mr{prod}(\alpha)+F(\hbar)$ (ansatz (\ref{prop6 W})) and the relaxed one $\hat{W}=\Ind_{\iota,\hat{K}}(S)$. The general ansatz for $W_\Lambda(t,dt)$, taking into account that $\g=\su(2)$ and that construction (\ref{W_Lambda via BV integral}) is compatible with ad-invariance $\alpha\mapsto g\alpha g^{-1}$, is the following:
\begin{multline} \label{prop9 W_Lambda ansatz}
W_\Lambda(t,dt)(\alpha)=A(\hbar;t) W_\mr{prod}^{003}(\alpha)+ B(\hbar;t)+\\
+dt\cdot\left( C(\hbar;t)\; \frac{1}{12} \sum_{a,b,c,d,e} \epsilon_{abc}\delta_{de}\alpha_{(0)}^a \alpha_{(0)}^b \alpha_{(0)}^c \alpha_{(3)}^d \alpha_{(3)}^e + D(\hbar;t) \sum_{a,b} \delta_{ab} \alpha_{(0)}^a \alpha_{(3)}^b \right)
\end{multline}
where $A,B,C,D$ are some functions of $\hbar$ and the homotopy parameter $t\in [0,1]$. The boundary conditions are:
$$A(\hbar;0)=1,\; A(\hbar;1)=A(\hbar),\;B(\hbar;0)=F(\hbar),\; B(\hbar;1)=B(\hbar)$$
The extended QME (\ref{extended QME}) for homotopy $W_\Lambda(t,dt)$ is equivalent to the system
\begin{eqnarray}
\frac{\dd}{\dd t} A(\hbar;t)=\hbar\; C(\hbar;t)- A(\hbar;t)\cdot D(\hbar;t) \label{prop9 eq1}\\
\frac{\dd}{\dd t} B(\hbar;t)=3 \hbar\; D(\hbar;t) \label{prop9 eq2}
\end{eqnarray}

Next, the argument of vanishing of non-trivial trees (due to formality of $\C$,\; $K\circ\iota=0$ and $d\circ \iota=0$) applies again to construction (\ref{W_Lambda via BV integral}). Hence, we have
$$A(\hbar;t)=1+\hbar A^{(1)}(t)+O(\hbar^2), \; C(\hbar;t)=\hbar C^{(1)}(t)+O(\hbar^2),\; D(\hbar;t)=\hbar D^{(1)}(t)+O(\hbar^2)$$
And due to $G_{0,0}=G_{1,0}=\varnothing$, we again have $B(\hbar;t)=\hbar^2 B^{(2)}(t)+O(\hbar^3)$. Equation (\ref{prop9 eq1}) in order $O(\hbar)$ together with (\ref{prop9 eq2}) in order $O(\hbar^2)$ yield
$$\frac{\dd}{\dd t}A^{(1)}(t)=-D^{(1)}(t),\quad \frac{\dd}{\dd t} B^{(2)}(t)=3 D^{(1)}(t)$$
Hence the expression $3 A^{(1)}(t)+B^{(2)}(t)$ does not depend on $t$. For $t=0$ it is the two-loop part of the invariant $F(\hbar)$ from (\ref{prop6 W}), while for $t=1$ it is the right hand side of (\ref{prop9 3A^1+B^2}). This concludes the proof.
\end{proof}

\begin{Rem}[Generalization] The generalization of Proposition \ref{prop9} to an arbitrary (quadratic) $\g$ is straightforward. We no longer have ansatz (\ref{A W_prod + B}) for $\hat{W}$, since there might be more ad-invariant functions on $\g[1]\oplus \g[-2]$, but we still can write
$$\hat{W}(\alpha)=(1+\hbar A^{(1)}) W_\mr{prod}^{003}(\alpha)+\hbar^2 B^{(2)}+ O(\hbar^3+ \hbar^2 (\alpha_{(0)})^2\alpha_{(3)}+\hbar (\alpha_{(0)})^4 (\alpha_{(3)})^2)$$
(we could prescribe weight $1$ to $\hbar$ and weight $1/3$ to $\alpha_{(0)}$ and $\alpha_{(3)}$, then we write explicitly terms with weight $\leq 2$). The fact that $O(\hbar \alpha_{(0)} \alpha_{(0)} \alpha_{(3)})$-contribution is proportional to $W_\mr{prod}^{003}$ (in principle there could be some other invariant tensor of rank 3) is explained by the fact that it is given by a single Feynman diagram $\Gamma\in G_{1,3}$ --- the wheel with 3 leaves --- and the Lie algebra part of this diagram is described by contraction $\sum_{d,e,f}f_{ade}f_{bef}f_{cfd}\propto f_{abc}$. Following the proof of Proposition \ref{prop9}, for the homotopy we have
\begin{multline*} W_\Lambda(t,dt)(\alpha)=\\
(1+\hbar A^{(1)}(t))\; W_\mr{prod}^{003}(\alpha)+\hbar^2 B^{(2)}(t)+ O(\hbar^3+ \hbar^2 (\alpha_{(0)})^2 \alpha_{(3)}+\hbar (\alpha_{(0)})^4 (\alpha_{(3)})^2)+\\
+dt\cdot\left(\hbar\; D^{(1)}(t)\sum_{a}\alpha_{(0)}^a\alpha_{(3)}^a+O\left(\hbar^2 \alpha_{(0)}\alpha_{(3)}+\hbar (\alpha_{(0)})^3(\alpha_{(3)})^2 \right)\right)
\end{multline*}
The extended QME at order $O(dt\;\hbar\; (\alpha_{(0)})^2\alpha_{(3)}+dt\;\hbar^2)$ yields
$$\frac{\dd}{\dd t}A^{(1)}(t)=-D^{(1)}(t),\quad \frac{\dd}{\dd t} B^{(2)}(t)=\dim\g\cdot D^{(1)}(t)$$
Hence the two-loop part of the invariant $F(\hbar)$ is expressed in terms of coefficients of the relaxed effective action $\hat{W}$ as
\be F^{(2)}=\dim\g\cdot A^{(1)}+B^{(2)} \label{F^2 invariant}\ee
\end{Rem}

The other case discussed in Section~\ref{sec 2.4}, the case of formal $\C$ with general $B_1$, is translated straightforwardly into the setting of relaxed effective actions.
\begin{proposition} \label{prop10}
Suppose $\C$ is formal and $\iota: H^\bt(\C)\hra\C$ is an algebra homomorphism. Then the relaxed effective action has the form
\be \Ind_{\iota,\hat{K}}(S)(\alpha)=W_\mr{prod}(\alpha)+\hbar \hat{F}^{(1)}(\alpha)+O(\hbar^2) \label{prop10 eq1}\ee
where the one-loop part can be expressed as a super-trace:
\be \hat{F}^{(1)}(\alpha)=\frac{1}{2}\;\Str_{\g\otimes\C}\log \left(1+\hat{K}\circ l(\iota(\alpha),\bt)\right) \label{prop10 Str}\ee
The restriction of the one-loop relaxed effective action to the Maurer-Cartan set $\hat{F}^{(1)}|_\mr{MC}$ is invariant under deformations of $(\iota,\hat{K})$, preserving the homomorphism property of $\iota$.
\end{proposition}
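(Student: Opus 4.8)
The plan is to mirror the proof of Proposition~\ref{prop7}, checking at each step that the property $K^2=0$ (which $\hat K$ lacks) was never essential. The only algebraic input is, as before, that $\iota$ being an algebra homomorphism forces $l(\iota(\alpha),\iota(\alpha))\in\im(\iota)$, so that by $\hat K\circ\iota=0$ (property~\eqref{K iota=0}, which we still impose on $\hat K$) one has
\be \hat K\circ l(\iota(\alpha),\iota(\alpha))=0 \ee
exactly as in~\eqref{prop7 eq1}; moreover $\im(\iota)\subset\ker d$, so $d\,l(\iota(\alpha),\iota(\alpha))=0$ as well. First I would feed the first identity into the vanishing argument of Proposition~\ref{prop7}, run now on the Feynman expansion~\eqref{W pert series} computed with the propagator $\hat K$: every nontrivial trivalent tree contains a vertex adjacent to two leaves and one internal edge (a ``cherry''), whose contribution carries the factor $\hat K\circ l(\iota(\alpha),\iota(\alpha))=0$; hence the tree part of $\Ind_{\iota,\hat K}(S)$ is exactly $W_\mr{prod}$, and only $\Gamma_{0,3}$ survives. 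The same factor kills every nontrivial branch hanging off a one-loop graph, so at one loop only the pure wheels remain. Crucially, a wheel alternates $\hat K$ and $l(\iota(\alpha),\bt)$ around the rim and therefore never produces two consecutive $\hat K$'s, so $\hat K^2\neq 0$ is invisible here; summing the wheels in the standard way gives~\eqref{prop10 Str}. This establishes~\eqref{prop10 eq1}.

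For the invariance statement I would first argue that a deformation of the relaxed induction data $(\iota,\hat K)$ induces an infinitesimal canonical transformation of $\Ind_{\iota,\hat K}(S)$ whose tree part vanishes. Writing $\hat K=K+d\Lambda d$ as in~\eqref{K+d Lambda d}, I split the deformation into (i) a deformation of the strict pair $(\iota,K)$ and (ii) a deformation of $\Lambda$. For (i) I invoke Proposition~\ref{prop8} to present $\Ind_{\iota,\hat K}(S)=\Ind_{\iota,K}(S+\Phi_\Lambda)$ and then rerun the analysis of Proposition~\ref{prop5}: the tree part of the generator is built from the expressions $\hat K\circ l(\iota,\iota)$, $\delta\kappa\circ l(\iota,\iota)$ and $\pi(\delta I(\alpha),l(\iota,\iota))$, all of which vanish because $\iota$ is a homomorphism. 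For (ii) a change of $\Lambda$ changes $\Phi_\Lambda$, and by the homotopy constructed in the proof of Proposition~\ref{prop8} this change is itself an infinitesimal canonical transformation of the action $S+\Phi_\Lambda$ on $\FF$; by Lemma~\ref{lm1} it descends to a canonical transformation of the effective action, and its tree-level generator is assembled from $d\Lambda d\circ l(\iota(\alpha),\iota(\alpha))$, which vanishes because $l(\iota(\alpha),\iota(\alpha))$ is $d$-closed. Hence the tree part $R'^{(0)}$ of the total generator is zero.

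With $R'^{(0)}=0$, the loop expansion $R'=\sum_{l\ge 1}\hbar^l R'^{(l)}$ starts at one loop, and the corresponding vector field $G$ of~\eqref{prop5 R'} has no tree part, $G^{(0)}=0$. Reading off the order-$\hbar$ part of the transformation~\eqref{prop5 G acts on F} then gives
\be \hat F^{(1)}\ \mapsto\ \hat F^{(1)}+G^{(1)}\circ W_\mr{prod}^{111} \ee
with no divergence contribution, since the term $\hbar\,\mr{div}(G)$ only enters at order $\hbar^2$ once $G^{(0)}=0$. Finally I would observe, exactly as in Proposition~\ref{prop7}, that the Maurer--Cartan locus is precisely the critical locus of $W_\mr{prod}^{111}$, since $\frac{\dd}{\dd\alpha_{(1)}^{ia}}W_\mr{prod}^{111}$ is proportional to $\sum_{j,k}\mu_{ijk}[\alpha_{(1)}^j,\alpha_{(1)}^k]^a$. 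Therefore the correction $G^{(1)}\circ W_\mr{prod}^{111}$, being the directional derivative of $W_\mr{prod}^{111}$ along $G^{(1)}$, vanishes on $\mr{MC}$, and $\hat F^{(1)}|_\mr{MC}$ is invariant.

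I expect the genuine obstacle to be step (ii): controlling the effect of a deformation of $\Lambda$ in the relaxed setting, where there is no honest Lagrangian subspace to integrate over. The content of Proposition~\ref{prop8}---that changing $\Lambda$ amounts to a canonical transformation of $S$ on $\FF$, realized by an explicit homotopy---is exactly what reduces this to Lemma~\ref{lm1}; checking that the resulting generator has vanishing tree part (via the $d$-closedness of $l(\iota(\alpha),\iota(\alpha))$) is the one place where a little care beyond a verbatim transcription of Proposition~\ref{prop7} is needed.
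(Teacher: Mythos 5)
Your first half (the ansatz \eqref{prop10 eq1} and the super-trace formula \eqref{prop10 Str}) is exactly the paper's argument: formality plus $\hat K\circ\iota=0$ kill all non-trivial trees and all one-loop graphs except wheels, and wheels never produce two consecutive $\hat K$'s. The gap is in your invariance argument. You transplant the structural results of Propositions~\ref{prop4} and~\ref{prop5} --- the ansatz \eqref{W ansatz}, the generator ansatz \eqref{prop5 R'}, and the transformation law \eqref{prop5 G acts on F} --- into the relaxed setting, but their proofs use \eqref{K^2=0} essentially (``the value of $\Gamma$ will contain one of the expressions $K^2(\cdots)$, $K(\iota(\cdots))$''), and they fail for $\hat K$. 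Concretely, $\hat F^{(1)}$ is \emph{not} a function of $\alpha_{(1)}$ alone: a wheel containing the segment $\hat K\circ l(\iota(\alpha_{(0)}),\bt)\circ\hat K=\mathrm{ad}_{\alpha_{(0)}}\otimes\hat K^2$ no longer vanishes, so wheels carrying, e.g., one $\alpha_{(0)}$ and one $\alpha_{(2)}$ insertion survive (they have the right degree and ghost number). For the same reason the generator of the induced canonical transformation need not be linear in $\alpha_{(2)}$ with coefficients depending only on $\alpha_{(1)}$, so your formula $\hat F^{(1)}\mapsto\hat F^{(1)}+G^{(1)}\circ W^{111}_\mr{prod}$, and with it your final step ``$\mr{MC}$ is the critical locus of $W^{111}_\mr{prod}$'', is unjustified. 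This is exactly why the paper's statement restricts the invariant to $\mr{MC}$ and why its proof never analyzes deformations of $(\iota,\hat K)$ directly: it uses the homotopy \eqref{W_Lambda via BV integral} of Proposition~\ref{prop8} connecting strict to relaxed, reads off $\frac{\dd}{\dd t}F^{(1)}(t)=\{W_\mr{prod},R'^{(1)}_\Lambda(t)\}'$ from the extended QME, restricts to the critical locus $\overline{\mr{MC}}$ of the \emph{full} $W_\mr{prod}$, deduces $\hat F^{(1)}|_{\mr{MC}}=F^{(1)}|_{\mr{MC}}$ via $\mr{MC}\subset\overline{\mr{MC}}$, and only then invokes the already proved strict invariance of Proposition~\ref{prop7}.

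Your route can be repaired, but not verbatim. First, once written as a strict induction of $S+\Phi_\Lambda$, the vanishing of the tree part of the generator requires checking the additional tree diagrams containing $\Phi_\Lambda$-vertices \eqref{Phi_Lambda}; your three vanishing expressions only cover purely trivalent trees (the missing cases die by $d\circ\iota=0$ and skew-adjointness of $d$, but this must be said). Second, with $R'=\hbar R'^{(1)}+O(\hbar^2)$ the order-$\hbar$ transformation is $\hat F^{(1)}\mapsto\hat F^{(1)}+\{W_\mr{prod},R'^{(1)}\}'$ with the full bracket, and one concludes by noting that all first derivatives of $W_\mr{prod}$ vanish on $\overline{\mr{MC}}$ and that $\mr{MC}\subset\overline{\mr{MC}}$. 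With these two corrections your direct argument goes through, though it is heavier than the paper's reduction to the strict invariant.
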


\begin{proof}
Analogously to the case of a strict chain homotopy $K$ (Proposition \ref{prop7}), formality of $\C$ together with $\hat{K}\circ\iota=0$ imply vanishing of non-trivial trees (hence the ansatz (\ref{prop10 eq1})) and that the only possibly non-vanishing one-loop diagrams are wheels (hence the super-trace formula (\ref{prop10 Str})).

Second, we know that there is a homotopy $W_{\Lambda}(t,dt)(\alpha)$ connecting the strict effective action $W(\alpha)=W_\mr{prod}(\alpha)+\hbar F^{(1)}(\alpha_{(1)})+\cdots$ to the relaxed one $\hat{W}(\alpha)=W_\mr{prod}(\alpha)+\hbar \hat{F}^{(1)}(\alpha)+\cdots$:
$$W_{\Lambda}(t,dt)(\alpha)=\left(W_\mr{prod}(\alpha)+\hbar\; F^{(1)}(t)(\alpha)+O(\hbar^2)\right)+dt\cdot\left(\hbar\; R'^{(1)}_{\Lambda}(t)(\alpha)+O(\hbar^2)\right)$$
(here we again exploit the vanishing of trees implied by construction (\ref{W_Lambda via BV integral}), formality of $\C$ and $K\circ\iota=d\circ\iota= 0$). The extended QME for the homotopy at order $O(dt\cdot \hbar)$ is
$$\frac{\dd}{\dd t}F^{(1)}(t)(\alpha)=\{W_\mr{prod}(\alpha),R'^{(1)}_{\Lambda}(t)(\alpha)\}'$$
Hence the restriction $F^{(1)}(t)|_{\overline{\mr{MC}}}$ does not depend on $t$, where
\[
\overline{\mr{MC}}=\{\alpha|\;l(\iota(\alpha),\iota(\alpha))=0\}\subset\FF'
\]
is the set of critical points of $W_\mr{prod}$ (the ``non-homogeneous Maurer-Cartan set''). As $\mr{MC}\subset\overline{\mr{MC}}$, the restriction $F^{(1)}(t)|_{\mr{MC}}$ also does not depend on $t$, hence $$\hat{F}^{(1)}|_{\mr{MC}}=F^{(1)}|_{\mr{MC}}$$
As the right hand side is invariant (Proposition \ref{prop7}), so is the left hand side. This concludes the proof.
\end{proof}

Obviously, going from the restriction to $\overline{\mr{MC}}$ to the restriction to $\mr{MC}$, we do not lose any invariant information, since $F^{(1)}$ depends only on $\alpha_{(1)}$ and not on other components of $\alpha$, which implies $\hat{F}^{(1)}|_{\overline{\mr{MC}}}=\hat{F}^{(1)}|_\mr{MC}$. Note also that in Proposition \ref{prop9} we managed to reconstruct  only the two-loop part of the invariant $F(\hbar)$ (Proposition \ref{prop6}) in the relaxed setting. On the other hand, we can completely reconstruct the invariant $F^{(1)}|_\mr{MC}$ of Proposition \ref{prop7} in the relaxed setting.

\subsection{Examples of dg Frobenius algebras}
In this Section we will provide some examples of non-negatively graded dg Frobenius algebras with pairing of degree $-3$ and zeroth Betti number $B_0=1$, i.e.\ algebras suitable for constructing abstract Chern--Simons actions.

\textbf{Example 1: minimal dg Frobenius algebra.} Let $V$ be a vector space and $\mu\in \wedge^3 V^*$ an arbitrary exterior 3-form on $V$. Then we construct the dg Frobenius algebra $\C$ from this data as
$$\C:=\RR\cdot 1 \oplus V[-1]\oplus V^*[-2]\oplus \RR\cdot v$$
where $v$ is a degree 3 element. The pairing $\pi$ is defined to be the canonical pairing between $V[-1]$ and $V^*[-2]$, and also we set $\pi(1,v):=1$. We define the product $m$ as
$$m(1,x)=x,\quad m(x_{(1)},y_{(1)})=<\mu,\bar x_{(1)}\wedge \bar y_{(1)}>,\quad m(x_{(1)},z_{(2)})=<\bar x_{(1)},\bar z_{(2)}>$$
where $x\in\C$ (element of arbitrary degree),\; $x_{(1)},y_{(1)}\in V[-1]$,\; $z_{(2)}\in V^*[-2]$ and bar means shifting an element to degree zero, e.g.\ $\bar x_{(1)}= s x_{(1)}\in V$, $\bar z_{(2)}=s^2 z_{(2)}\in V^*$;\; $<\bt,\bt>$ denotes the canonical pairing. The differential $d$ is set to zero. This construction gives the most general minimal (i.e.\ with zero differential) dg Frobenius algebra (non-negatively graded, with pairing of degree -3 and $B_0=1$). Abstract Chern--Simons action associated to a minimal algebra is a purely cubic polynomial in the fields, with coefficients being the components of $\mu$. Inducing the effective action on cohomology is the identity operation, since here $\C=H^\bt(\C)$.

\textbf{Example 2: differential concentrated in degree $\C^1\ra \C^2$.} Let $V$ be a vector space, $\mu\in\wedge^3 V^*$ be some 3-form on $V$ and $\delta: S^2 V\ra \RR$ some symmetric pairing on $V$ (not necessarily non-degenerate). We define $\C$, the pairing $\pi$ and product $m$ as in Example 1, but now we construct the differential $d:V[-1]\ra V^*[-2]$ from $\delta$ as $x_{(1)}\mapsto s^{-2}\delta(\bar x_{(1)},\bt)$. The two other components of the differential $\RR\cdot 1\ra V[-1]$ and $V^*[-2]\ra \RR\cdot v$ are set to zero as before. This construction gives the general dg Frobenius algebra with differential concentrated in degree $\C^1[-1]\ra \C^2[-2]$ only.

Here the Hodge decomposition $\C=\iota(H^\bt(\C))\oplus \C_{d-ex}\oplus \C_{K-ex}$ is defined by a choice of projection $p: V\ra \ker\delta$ or equivalently by choosing a complement $V''$ of $\ker \delta\subset V$ in $V$ (here we understand $\delta$ as the self-dual map $V\ra V^*$). We set
\begin{multline*}
H^\bt(\C)=\RR\cdot 1\oplus (\ker\delta) [-1]\oplus (\ker\delta)^*[-2]\oplus \RR\cdot v,\\
\C_{d-ex}=(\im\delta)[-2],\quad \C_{K-ex}=V''[-1]
\end{multline*}
The embedding $\iota: H^\bt(\C)\hra \C$ is canonical in degrees 0,1,3 and given by $p^*: (\ker\delta)^*\hra V^*$ in degree 2. The (non-vanishing part of the) chain homotopy $K$ is the inverse map for the isomorphism $d: V''[-1]\ra (\im\delta)[-2]$. So the induction data $(\iota,K)$ is completely determined by the choice of $p$. This means in particular that only the deformations of induction data of type II are possible here. Also there are no relaxed chain homotopies $\hat{K}$ (other than the strict one
described above), which is obvious from (\ref{K+d Lambda d}). Despite these simplifications, the effective action on cohomology for such $\C$ is in general non-trivial. A particular example here is the case $\C=S^\bt(\su(2)^*[-1])$ discussed in section \ref{sec 2.4}.

\textbf{Example 3: ``doubled'' commutative dga.} Let $\V=V^0\oplus V^1[-1]$ be a unital commutative associative dg algebra, concentrated in degrees 0 and 1, with differential $d_\V$ and multiplication $m_\V$, and satisfying $\dim H^0(\V)=1$. Then we set
$$\C:=\V\oplus \V^*[-3]=V^0\oplus V^1[-1]\oplus (V^1)^*[-2]\oplus (V^0)^*[-3]$$
with pairing $\pi$ generated by the canonical pairing between $\V$ and $\V^*$. The component of differential $V^0\ra V^1[-1]$ is given by $d_\V$, the component $(V^1)^*[-2]\ra (V^0)^*[-3]$ --- by the dual map $(d_\V)^*$, and the component $V^1[-1]\ra (V^1)^*[-2]$ is set to zero. Multiplication for elements of $\C$ of degrees 0 and 1 is given by $m_\V$ and is extended to other degrees by the cyclicity property (\ref{cyclicity of m}):
$$ m(x_{(0,1)},y_{(0,1)}):=m_\V(x_{(0,1)},y_{(0,1)}),\quad m(x_{(0,1)},z_{(2,3)}):=<m_\V(\bt,x_{(0,1)}),z_{(2,3)}>$$
for $x_{(0,1)},y_{(0,1)}\in\V$ and $z_{(2,3)}\in\V^*[-3]$. In particular, the product of elements of degree 1 in $\C$ is zero.

Hodge decomposition for $\C$ is fixed by choosing an embedding $\iota_\V: H^\bt(\V)\hra \V$ (it is canonical in degree zero since $H^0(\V)=\RR\cdot 1$, but non-canonical in degree one) and a retraction
$r_\V: \V\ra H^\bt(V)$. Equivalently, we choose a splitting of $\V$ into representatives of cohomology and the complement $\V=\iota_\V(H^\bt(\V))\oplus \V''$, i.e.\ $V^0=\RR\cdot 1\oplus {V^0}''$, $V^1=\iota_\V(H^1(\V))\oplus {V^1}''$. Then the Hodge decomposition for $\C$ is
$\C=\iota(H^\bt(\C))\oplus \C_{d-ex}\oplus \C_{K-ex}$ with
\begin{eqnarray*}H^\bt(\C)=\RR\oplus H^1(\V)[-1]\oplus (H^1(\V))^*[-2]\oplus \RR\cdot v,\\
\C_{d-ex}={V^1}''[-1] \oplus ({V^0}'')^*[-3],\quad \C_{K-ex}={V^0}''\oplus ({V^1}'')^*[-2]
\end{eqnarray*}
Here $v\in (V^0)^*[-3]$ is the element defined by the component of retraction $r_\V: V^0\ra \RR\cdot 1$. The embedding $\iota: H^\bt(\C)\ra \C$ is given by $\iota_\V$ in degrees 0,1 and by $(r_\V)^*$ in degrees 2,3. As in the Example 2, only type II deformations of the induction data $(\iota,K)$ are possible here. However, one can introduce the relaxed chain homotopy (\ref{K+d Lambda d}) here with $\Lambda: ({V^0}'')^*[-3]\ra {V^0}''$.

The BV integral (\ref{W def by BV integral}) is Gaussian in this example since the cyclic product $\pi(\bt,m(\bt,\bt))$ vanishes on $(\C_{K-ex})^{\otimes 3}$ (for trivial degree reasons). Hence, the only Feynman graphs contributing to $W(\alpha)$ are wheels and the trivial tree $\Gamma_{0,3}$ (``branches'' could also contribute, but they vanish since the component of the multiplication $\iota(H^1(\C))\otimes \iota(H^1(\C))\ra \C^2$ vanishes and the non-trivial part of $W(\alpha)$ depends only on $\alpha_{(1)}$ due to Proposition \ref{prop4}). The effective action can be written as
$$W(\alpha)=W^{003}_\mr{prod}(\alpha)+W^{012}_\mr{prod}(\alpha)+2\cdot\frac{1}{2}\;\hbar\; \tr_{\g\otimes V^0}\log\left(1+K\circ l(\iota(\alpha_{(1)}),\bt)\right)$$
(the factor 2  accounts for the contribution of the trace over $\g\otimes ({V^1}'')^*$ --- the other half of the Lagrangian subspace $\LL_K$).

\section{Three-manifold invariants}
We now wish to extend the results of the previous Sections to the Frobenius algebra $\Omega^\bullet(M)$ of differential forms on a
smooth compact $3$\ndash manifold $M$ (with de~Rham differential, wedge product and integration pairing). This will provide us
with the effective action (around the trivial connection) of Chern--Simons theory \cite{W}. The invariant of this Frobenius algebra will
also constitute an invariant of $3$\ndash manifolds modulo diffeomorphisms.

The discussion of the previous Sections, however, does not go through automatically since $\Omega^\bullet(M)$ is infinite dimensional.
As in previous works \cite{AS,K,BC} the way out is to restrict oneself to a special class of chain homotopies $K$ for which
the finite dimensional arguments are simply replaced by the application of Stokes' theorem.
However, at some point of the construction we have to choose a
framing and
hence we get invariants of framed $3$\ndash manifolds.

\subsection{Induction data and the propagator}
As in the finite-dimensional case we fix an embedding $\iota\colon H^\bullet(M)\hra\Omega^\bullet(M)$. By $\chi\in\Omega^3(M\times M)$ we will denote the
representative of the Poincar\'e dual of the diagonal $\Delta$ determined by this embedding. Namely, if $\left\{1,\{\alpha_i,\beta^i\}_{i=1,\dots,B_1},v\right\}$
is a basis of $\iota(H^\bullet(M))$ (with $1$ the constant function, $\alpha_i\in\Omega^1$, $\beta^i\in\Omega^2$, $v\in\Omega^3$ and
$\int_M \alpha_i\,\beta^j=\delta_i^j$, $\int_Mv=1$), then
\[
\chi = v_2 - \alpha_{i,1}\beta^i_2 + \beta^i_1\alpha_{i,2} - v_1,
\]
where we have used Einstein's convention over repeated indices and for any form $\gamma\in\Omega^\bullet(M)$ we write $\gamma_i$ for
$\pi_i^*\gamma$ with $\pi_1$ and $\pi_2$ the two projections $M\times M\to M$.

The chain homotopy $K$ is assumed to be determined by a smooth integral kernel $\hat\eta$. Namely, let
$C_2^0(M):=M\times M\setminus\Delta=\{(x_1,x_2)\in M\times M : x_1\not=x_2\}$ be the open configuration space of two points in $M$ and let
$C_2(M)$ be its Fulton--MacPherson--Axelrod--Singer (FMAS) compactification \cite{FM,AS} obtained by replacing the diagonal $\Delta$ with
its unit normal bundle. Let $\pi_{1,2}$ be the extensions to the compactification of the projection maps $\pi_i(x_1,x_2)=x_i$.
Then $\hat\eta$ is a smooth $2$\ndash form on $C_2(M)$ and the chain homotopy $K$ is defined by
\[
K\alpha = -\pi_{2*}(\hat\eta\,\pi_1^*\alpha),\qquad\alpha\in\Omega^\bullet(M),
\]
where a lower $*$ denotes integration along the fiber.

For $K$ to be a symmetric chain homotopy satisfying \eqref{dK+Kd}, $\hat\eta$ must satisfy the following properties (see \cite{BC,C} for more details):
\begin{description}
\item[P1] $\ddd\hat\eta=\pi^*\chi$, with $\pi$ the extension to $C_2(M)$ of the inclusion of $C_2^0(M)$ into $M\times M$.
\item[P2] $\int_{\de C_2(M)}\hat\eta=-1$.
\item[P3] $T^*\hat\eta=-\hat\eta$, where $T$ is the extension to $C_2(M)$ of the involution $(x_1,x_2)\mapsto(x_2,x_1)$ of $C_2^0(M)$.
\end{description}

Observe that $\de C_2(M)$ is canonically diffeomorphic to the sphere tangent bundle $STM$ of $M$ ($STM$ is the quotient of $TM$ by the action
$(x,v)\mapsto(x,\lambda v)$, $\lambda\in\mathbb{R}^+_*$). Condition \textsf{P2} may then be refined to
\begin{description}
\item[P2'] $\iota_\de^*\hat\eta=-\eta$, where $\iota_\de$ is the inclusion map $STM=\de C_2(M)\hra C_2(M)$ and
$\eta$ is a given odd global angular form on $STM$.
\end{description}
Recall that a global angular form on a sphere bundle $S\to M$ is a differential form on the total space $S$ whose restriction
to each fiber generates its cohomology and whose differential is minus the pullback of a representative of the Euler class (in our case zero).
These two properties are consistent with the restriction to the boundary of \textsf{P1} and \textsf{P2}. Odd here means
$T^*\eta=-\eta$ where $T$ is the antipodal map on each fiber, and this is compatible with the restrictions to the boundary of \textsf{P3}.
Global angular forms always exist.

\begin{Rem}[Kontsevich \cite{K}]\label{r:K}
Since $STM$ is trivial for every $3$\ndash manifold, there is a simple choice for $\eta$
only depending on a choice of framing or, equivalently, a choice of trivialization $f\colon STM \stackrel\sim\to M\times S^2$.
One simply sets $\eta=f^*\omega$ where  $\omega$ is the normalized $SO(3)$\ndash invariant volume form on $S^2$ (tensor $1\in\Omega^0(M)$).
\end{Rem}
\begin{Rem}[\cite{BC}]\label{r:BC}
It is also possible to construct an odd global angular form depending on the choice of a connection but not on a framing. 
Namely, realize $STM$ as the $S^2$\ndash bundle associated to the frame bundle $F(M)$:
\[
STM=F(M)\times_{SO(3)}S^2
\]
(we reduce the structure group
of the frame bundle to $SO(3)$ by picking a Riemannian metric).
By choosing a metric connection $\theta$ (e.g., the Levi-Civita connection),
one defines $\bar\eta:=\omega+\ddd(\theta_i x^i)/(2\pi)$; here the $x^i$s, $i=1,2,3$, are the homogeneous coordinates on $S^2$, while
the $\theta_i$s are the coefficients
of the connection in the basis $\{\xi_i\}$ of $\mathfrak{so(3)}$ given by $(\xi_i)_{jk}=\epsilon_{ijk}$.
It is then easy to show that $\bar\eta$ is a global angular form on $F(M)\times S^2$ and that it is basic. Hence it defines a global angular
form $\eta$ on $STM$.
\end{Rem}

\begin{Lem}[\cite{BC}]\label{l:P123}
For any given odd global angular form $\eta$ on $STM$, there exists a propagator $\hat\eta\in\Omega^2(C_2(M))$ satisfying
\textsf{P1, P2', P3}.
\end{Lem}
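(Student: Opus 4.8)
The plan is to construct $\hat\eta$ in three stages: first produce \emph{some} $2$\ndash form on $C_2(M)$ with the prescribed differential $\pi^*\chi$, then correct its boundary restriction so that it equals $-\eta$, and finally antisymmetrize under $T$. Before starting, note the one compatibility condition, $\ddd\eta=0$. Since $\eta$ is \emph{odd}, $\ddd\eta$ is antiinvariant under the fibrewise antipodal map $T_\de$ on $STM$; but as the differential of a global angular form $\ddd\eta=-p^*e$ is pulled back from the base $M$ (with $p\colon STM\to M$) and hence $T_\de$\ndash invariant. A form that is both invariant and antiinvariant vanishes, so $\ddd\eta=0$. (This is consistent with the Remarks, the Euler form of the odd\ndash rank bundle $TM$ vanishing identically.) It matches the boundary restriction of \textsf{P1}: the map $\pi\circ\iota_\de$ factors through the diagonal, on which $\chi$ restricts to $0$, because there $v_2-v_1$ and $-\alpha_{i,1}\beta^i_2+\beta^i_1\alpha_{i,2}$ each cancel (using $\alpha_i\beta^i=\beta^i\alpha_i$ in degrees $1$ and $2$); hence $\iota_\de^*\pi^*\chi=0=\ddd(-\eta)$.

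For the first stage I would show that $\pi^*\chi$ is exact on $C_2(M)$. The compactification deformation\ndash retracts onto $C_2^0(M)=M\times M\setminus\Delta$, so $H^\bt(C_2(M))\cong H^\bt(M\times M\setminus\Delta)$, and under this identification $[\pi^*\chi]$ is the restriction of $[\chi]=\mr{PD}[\Delta]$ to the complement of the diagonal. As $\mr{PD}[\Delta]$ is the image of the Thom class under $H^3_\Delta(M\times M)\to H^3(M\times M)$, the long exact sequence of the pair $(M\times M,\,M\times M\setminus\Delta)$ forces its restriction to the complement to vanish. Thus $[\pi^*\chi]=0$ and we may choose $\hat\eta_1\in\Omega^2(C_2(M))$ with $\ddd\hat\eta_1=\pi^*\chi$.

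The heart of the argument is the second stage. The difference $\iota_\de^*\hat\eta_1+\eta$ is a closed $2$\ndash form on $STM$, whose class I would read off from the Leray--Hirsch splitting $H^2(STM)\cong p^*H^2(M)\oplus\RR\,[\eta]$ (valid since $[\eta]$ restricts to a fibre generator). The fibre component is pinned by one Stokes computation: integrating over the fibre $S^2_x=\de\,\overline{\{x\}\times M}$ gives $\int_{S^2_x}\hat\eta_1=\pm\int_{\overline{\{x\}\times M}}\pi^*\chi=\pm\int_M v=\pm1$, which with the boundary\ndash orientation convention equals the fibre integral of $-\eta$. Hence $\iota_\de^*\hat\eta_1+\eta$ has no fibre component and lies in $p^*H^2(M)=\iota_\de^*\bigl(\pi_1^*H^2(M)\bigr)$, i.e.\ in the image of the restriction map. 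Choosing a closed $\zeta\in\Omega^2(C_2(M))$ with $[\iota_\de^*\zeta]=-[\iota_\de^*\hat\eta_1+\eta]$ and subtracting an exact collar correction $\ddd\tilde\lambda$ to absorb the leftover boundary\ndash exact term, I get $\hat\eta_3:=\hat\eta_1+\zeta-\ddd\tilde\lambda$ with $\ddd\hat\eta_3=\pi^*\chi$ (\textsf{P1}) and $\iota_\de^*\hat\eta_3=-\eta$ (\textsf{P2'}). The main obstacle is exactly this fibre computation: it is what shows the normalization \textsf{P2} is not an extra condition to impose but is automatically forced by $\ddd\hat\eta=\pi^*\chi$ together with $\int_M v=1$.

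Finally I would antisymmetrize, setting $\hat\eta:=\tfrac12(\hat\eta_3-T^*\hat\eta_3)$. Since $T^*\chi=-\chi$ (the swap sends $v_2-v_1$ and $-\alpha_{i,1}\beta^i_2+\beta^i_1\alpha_{i,2}$ each to minus themselves), one has $\ddd\hat\eta=\tfrac12(\pi^*\chi-T^*\pi^*\chi)=\pi^*\chi$, so \textsf{P1} survives; on the boundary $T$ is the antipodal map and oddness of $\eta$ gives $\iota_\de^*T^*\hat\eta_3=T_\de^*(-\eta)=\eta$, whence $\iota_\de^*\hat\eta=-\eta$ and \textsf{P2'} survives; and $T^*\hat\eta=-\hat\eta$ because $T^2=\id$, which is \textsf{P3}. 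This produces the desired propagator.
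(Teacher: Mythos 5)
Your proof is correct, but it follows a genuinely different route from the paper's. The paper's argument is a one\ndash stroke geometric construction: pick a $T$\ndash invariant tubular neighbourhood $U$ of $\de C_2(M)$ with projection $p$, a $T$\ndash even bump function $\rho$ equal to $-1$ near the boundary, and observe that $\ddd(\rho\,p^*\eta)$ (extended by zero) is a representative of the Poincar\'e dual of the diagonal; hence it equals $\pi^*(\chi+\ddd\alpha)$ for some $\alpha\in\Omega^3(M\times M)$, which may be chosen $T$\ndash odd, and one sets $\hat\eta:=\rho\,p^*\eta-\pi^*\alpha$. In that construction \textsf{P2'} and \textsf{P3} hold by inspection ($\rho\equiv-1$ near $\de C_2(M)$ fixes the boundary value, and a $T$\ndash odd form pulled back from $M\times M$ restricts to zero on the boundary, exactly the observation you make about $\iota_\Delta^*\alpha$), and the only cohomological input is that two representatives of $\mathrm{PD}[\Delta]$ in $M\times M$ differ by an exact term. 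You instead work cohomologically on $C_2(M)\simeq M\times M\setminus\Delta$: exactness of $\pi^*\chi$ from the long exact sequence of the pair, then Leray--Hirsch on the $S^2$\ndash bundle $STM$ together with a Stokes/fibre\ndash integration argument to repair the boundary value of an arbitrary primitive, and finally antisymmetrization under $T$. Both are sound; your sign bookkeeping in each of the three stages checks out. What the paper's approach buys is structural control near the diagonal: its propagator is, by construction, the standard angular term plus a \textsf{regular} form (a pullback from $M\times M$), which is precisely the information exploited later in Lemma~\ref{l:P1234}, in the discussion around Conjecture~\ref{conj}, and in Appendix~\ref{app-reg}; your $\hat\eta_1$ is an arbitrary primitive and carries no such structure (though your $\zeta$ can be taken regular, e.g.\ $\zeta=\pi_1^*\beta$ with $\beta$ closed, the collar term $\ddd\tilde\lambda$ need not be). What your approach buys is conceptual transparency: in particular, your fibre\ndash integral computation makes explicit a fact the paper leaves implicit, namely that the normalization \textsf{P2} is not an extra condition but is forced by \textsf{P1} together with $\int_M v=1$.
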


\begin{proof}
The complete proof in the case of a rational homology sphere is contained in \cite{BC}. (The general case, whose proof is a straightforward
generalization of the previous case, is spelled out in \cite{C}.) For completeness, we briefly recall the idea of the proof.
One chooses a tubular neighborhhood $U$ of $\de C_2(M)$ and a subneighborhood $V$. One picks a compactly supported function $\rho$ which is constant and equal
to $-1$ on $V$ and is even under the action of $T$. Let $p$ be the projection $U\to \de C_2(M)$.
The differential of the form $\rho p^*\eta$ is a representative of the Thom class of the normal bundle of $\Delta$.
The form $\rho p^*\eta$ may be extended by zero to the whole of $C_2(M)$ and its differential is then a representative of the Poincar\'e dual of $\Delta$.
Thus, its difference from the given representative $\chi$ will be exact and actually \textsf{regular} on the diagonal (see Appendix~\ref{app-reg}
for the definition). Namely,
$\ddd\rho p^*\eta=\pi^*(\chi+\ddd\alpha)$, $\alpha\in\Omega^3(M\times M)$. Since both $\rho p^*\eta$ and $\chi$ are odd under the action of $T$,
one may also choose $\alpha$ to be odd. Finally, one sets $\hat\eta:=\rho p^*\eta-\pi^*\alpha$.
\end{proof}

\subsection{The improved propagator}
In the previous Sections it was also important to assume the conditions $K\circ\iota=0$ and $K^2=0$. In terms of the propagator $\hat\eta$
they correspond to additional conditions that are easily expressed by using the

\begin{Def}[Compactification of configuration spaces]
The FMAS compactification $C_n(M)$ of the configuration space
$C_n^0(M):=\{(x_1,\dots,x_n)\in M^n : x_i\not=x_j\ \forall i\not=j\}$ is obtained by taking the closure
\[
C_n(M) := \overline{C_n^0(M)}\subset M^n\times\prod_{S\subset\{1,\dots,n\}:|S|\ge2}
\Bl(M^S,\Delta_S),
\]
where $\Bl(M^S,\Delta_S)$ denotes the differential-geometric blowup obtained by replacing the principal diagonal $\Delta_S$ in $M^S$ with
its unit normal bundle $N(\Delta_S)/\mathbb{R}^+_*$. See \cite{AS,BT}.
\end{Def}
We recall that the $C_n(M)$ are smooth manifolds with corners.

\begin{Not}\label{not-pi}
Given a differential form $\gamma$ on $M$, we write $\gamma_i:=\pi_i^*\gamma$ where $\pi_i$ is the extension to $C_n(M)$ of the
projection $C_n^0(M)\to M$, $(x_1,\dots,x_n)\mapsto x_i$. We also set $\hat\eta_{ij}:=\pi_{ij}^*\hat\eta$,
where $\pi_{ij}$ 
is the extension to the compactifications of the projection $C_n^0(M)\to C_2^0(M)$,
$(x_1,\dots,x_n)\mapsto (x_i,x_j)$. 
Given a product of such forms on a compactified configuration space and a set of indices $i,j,\ldots$, we denote by
$\int_{i,j,\ldots}$ the fiber integral over the points $x_i,x_j,\ldots$. This notation also takes care of orientation. Namely,
if $i,j,\ldots$ are not odered, the integral carries the sign of the permutation to order them.
\end{Not}

We are finally ready to list the two simple properties corresponding to $K\circ\iota=0$ and $K^2=0$:
\begin{description}
\item[P4] $\int_2\hat\eta_{12}\,\gamma_2=0$ for all $\gamma\in\iota(H^\bullet(M))$.
\item[P5] $\int_2 \hat\eta_{12}\,\hat\eta_{23}=0$.
\end{description}

\begin{Lem}\label{l:P1234}
For any given odd global angular form $\eta$ on $STM$, there exists a propagator $\hat\eta\in\Omega^2(C_2(M))$ satisfying
\textsf{P1, P2', P3, P4}.
\end{Lem}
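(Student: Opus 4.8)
The plan is to upgrade the propagator $\hat\eta$ produced in Lemma~\ref{l:P123}---which already satisfies \textsf{P1, P2', P3}---by adding a correction that enforces \textsf{P4} without spoiling the other three properties. First I would analyze what \textsf{P4} demands: for each $\gamma\in\iota(H^\bullet(M))$, the fiber integral $\int_2\hat\eta_{12}\,\gamma_2$ is a differential form $g_\gamma$ on $M$, and I want it to vanish. The key observation is that \textsf{P1} gives $\ddd\int_2\hat\eta_{12}\,\gamma_2 = \int_2\pi^*\chi\,\gamma_2 \pm \int_2\hat\eta_{12}\,\ddd\gamma_2$; since the basis elements $\gamma$ are closed and $\chi$ is the Poincar\'e dual of the diagonal, the fiber integral of $\pi^*\chi$ against $\gamma_2$ reproduces $\gamma$ up to the representative, so $g_\gamma$ is closed and in fact a representative of the cohomology class $[\gamma]$. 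Thus $K\circ\iota$ is a chain map inducing the identity on cohomology, and $g_\gamma-\gamma$ is exact for each basis element.

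Next I would correct $\hat\eta$ by a term of the form $\pi_1^*\xi_\gamma\wedge(\text{something})$, or more systematically write $\hat\eta\mapsto\hat\eta-\pi^*\tau$ with $\tau\in\Omega^2(M\times M)$ a \textsf{regular} (smooth across the diagonal) form chosen so that the fiber integrals against all the finitely many basis elements $\gamma$ are killed. Concretely, since $\{1,\alpha_i,\beta^i,v\}$ is a finite basis, condition \textsf{P4} is a finite system of equations, and I would solve it by subtracting, for each basis element, a decomposable correction built from the primitives of the exact forms $g_\gamma-\gamma$ (which exist since $H^\bullet(M)$ is represented by $\iota$). The crucial point is that because the correction $\pi^*\tau$ is the pullback of a \emph{smooth} form on $M\times M$, it is closed-compatible with \textsf{P1} only if $\tau$ is itself closed; so I would instead allow $\ddd\hat\eta$ to be adjusted by an exact regular form and re-invoke the exactness/regularity argument from the proof of Lemma~\ref{l:P123} to reabsorb it, keeping $\ddd\hat\eta=\pi^*\chi$ intact. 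Symmetrization (averaging $\tau$ under $T^*$) restores \textsf{P3}, and since the correction is supported away from $\de C_2(M)$ (or vanishes there), \textsf{P2'} is preserved.

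The main obstacle I anticipate is the simultaneous bookkeeping of all three constraints while adding the correction: the fix for \textsf{P4} must be a form whose differential is controlled (to keep \textsf{P1}), which is odd under $T$ (to keep \textsf{P3}), and which does not alter the boundary restriction (to keep \textsf{P2'}). The natural strategy is to make the correction the pullback $\pi^*\tau$ of a regular form on $M\times M$ that is closed---so that \textsf{P1} is automatically unaffected---and then to show that the finite-dimensional linear-algebra problem of annihilating the finitely many periods $\int_2\hat\eta_{12}\,\gamma_2$ admits such a closed, odd, regular solution. I expect this to reduce, via Poincar\'e duality on $M$ and the nondegeneracy of the pairing, to solving a finite linear system whose solvability is guaranteed precisely because $K\circ\iota$ already induces the identity on cohomology, so only the exact ``error'' needs to be removed. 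Making the regularity on the diagonal and the oddness under $T$ compatible is the delicate part, and I would handle it exactly as in Lemma~\ref{l:P123}, by averaging over $T$ and invoking the regularity result of the appendix.
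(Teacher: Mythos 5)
Your overall strategy---correct the propagator of Lemma~\ref{l:P123} by a closed, $T$\ndash odd, \emph{regular} two-form (a pullback from $M\times M$), so that \textsf{P1} is untouched, and so that regularity plus $T$\ndash oddness force the restriction to $\de C_2(M)$ to vanish, preserving \textsf{P2'}---is exactly the paper's strategy. However, the step where you actually produce the correction rests on a claim that is false for degree reasons, and this breaks your solvability argument. You assert that $g_\gamma=\int_2\hat\eta_{12}\,\gamma_2$ is ``a representative of the cohomology class $[\gamma]$'', that $K\circ\iota$ ``induces the identity on cohomology'', and that one only needs primitives of the exact forms $g_\gamma-\gamma$. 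But $K$ lowers form degree by one ($\hat\eta$ has degree $2$ and the fiber of the projection is $3$\ndash dimensional), so $g_\gamma$ has degree $|\gamma|-1$: the difference $g_\gamma-\gamma$ is not even a homogeneous form, and $K\circ\iota$ cannot induce the identity on cohomology. Relatedly, your Stokes computation drops the boundary face of $C_2(M)$: the correct identity is $\ddd g_\gamma=\pm(\gamma-\PP'\gamma)=0$ for $\gamma\in\iota(H^\bullet(M))$, where one term comes from $\ddd\hat\eta=\pi^*\chi$ and the other from the boundary via \textsf{P2'}; as written, your displayed formula would give $\ddd g_\gamma=\pm\gamma\neq0$, contradicting the closedness you then assert. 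So the ``finite linear system'' you set up has no justified solution mechanism.

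What is actually needed (and what the paper does) requires no cohomological input at all: implement the operator transformation \eqref{K_2}, $K\mapsto(\id-\PP')K(\id-\PP')$, at the level of integral kernels. The correction $\lambda$ is the kernel of $\PP'K+K\PP'-\PP'K\PP'$, i.e., an explicit finite sum of single convolutions of $\hat\eta$ with the representatives $1,\alpha_i,\beta^i,v$ (these are the forms $K\gamma$ themselves, \emph{not} primitives of anything) tensored with dual representatives, plus double-convolution terms coming from $\PP'K\PP'$, which are numbers times products of representatives. Then $\hat\eta-\lambda$ satisfies \textsf{P4} simply because $(\id-\PP')\iota=0$, using only $\PP'\iota=\iota$; $\lambda$ is closed because $\PP'$ is a chain map and $\ddd K+K\ddd=\id-\PP'$, so the anticommutator of $\ddd$ with $\PP'K+K\PP'-\PP'K\PP'$ vanishes identically; $\lambda$ is regular because integrals on one argument produce forms on $M$ and integrals on two arguments produce numbers; and, being regular and $T$\ndash odd, its restriction to $\de C_2(M)$ vanishes, so \textsf{P2'} survives. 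The double-convolution terms are not optional bookkeeping: they are what allows the correction to be $T$\ndash odd (hence preserve \textsf{P3}) while still killing $K\iota$ exactly. If you replace your ``primitives of $g_\gamma-\gamma$'' by this kernel, your outline becomes the paper's proof.
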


\begin{proof}
The idea is to apply transformation \eqref{K_2}
to a propagator as the one constructed in Lemma~\ref{l:P123}. This must however be reformulated in terms of integral kernels.
Namely, let $\hat\eta$ satisfy \textsf{P1,P2',P3}. Set
\begin{multline*}
\lambda_{12}:=\int_3\eta_{13}\,v_3-\int_3\eta_{23}v_3+
\int_3\eta_{13}\alpha_{3,i}\beta_2^i-\beta_1^i\int_3\eta_{23}\alpha_{3,i}+
\int_3\eta_{13}\beta_3^i\alpha_{2,i}+\\ +\alpha_{1,i}\int_3\eta_{23}\beta_3^i
-\alpha_{1,i}\int_{3,4}\beta_3^i\eta_{34}\beta_4^j\alpha_{2,j}
-\beta_1^i\int_{3,4}\alpha_{3,i}\eta_{34}v_4
-\int_{3,4}v_3\eta_{34}\alpha_{4,i}\beta_2^i.
\end{multline*}
By construction the new propagator $\hat\eta-\lambda$ satisfies \textsf{P3} and \textsf{P4}.
It is not difficult to check that $\lambda$ is closed, so \textsf{P1} is still satisfied.
Finally observe that the integrals on one argument simply produce a form on $M$, while the integrals on two arguments
simply produce a number. As a consequence, $\lambda$ is (the pullback by $\pi$ of) a form on $M\times M$. As it is also $T$\ndash odd,
its restriction to the boundary vanishes. Thus, \textsf{P2'} still holds.
\end{proof}

\subsection{On property \textsf{P5}}
\begin{Lem}\label{l:P12345}
For any given odd global angular form $\eta$ on $STM$, there exists a propagator $\hat\eta\in\Omega^2(C_2(M))$ satisfying
\textsf{P1, P2, P3, P4, P5}. 
\end{Lem}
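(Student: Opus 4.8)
The plan is to carry the finite\ndash dimensional transformation \eqref{K_3=KdK}, $K\mapsto\hat K\,d\,\hat K$, over to the level of integral kernels on $C_2(M)$. Concretely, I would start from a propagator $\hat\eta_0$ produced by Lemma~\ref{l:P1234}, so that the associated operator $\hat K$ is a symmetric chain homotopy satisfying \eqref{dK+Kd} and \eqref{K iota=0} but possibly not \eqref{K^2=0}, and then define the improved operator $K:=\hat K\,d\,\hat K$. Using $\hat K\circ\iota=0$ one rewrites this in the closed form $K=\hat K-\hat K^2\,d$, exactly as in the Remark following \eqref{K^2=0}, and the same algebra (together with $d\circ\iota=0$, i.e.\ that $\iota$ lands in closed forms) yields $dK+Kd=\id-\PP'$, $K^T=-K$, $K\circ\iota=0$ and $K^2=\hat K^3\,d\,\PP'=0$. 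Thus at the operator level $K$ has all the required properties; what remains is to realize it as an honest smooth propagator on $C_2(M)$ and to read off \textsf{P1}--\textsf{P5} through the dictionary of this section.

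The explicit propagator is then obtained in the spirit of the correction $\lambda$ used in the proof of Lemma~\ref{l:P1234}. Since $\hat K$ has kernel $\hat\eta_0$, the operator $\hat K^2$ has kernel $\int_2\hat\eta_{0,12}\,\hat\eta_{0,23}$ on $C_2(M)$, and the propagator of $K=\hat K-\hat K^2\,d$ is read off from this by moving the $d$ onto a factor $\hat\eta_0$ via \textsf{P1} ($\ddd\hat\eta_0=\pi^*\chi$) and integrating by parts in the configuration\ndash space variables. The outcome is a finite sum of fiber integrals over $C_3(M)$ of products of $\hat\eta_0$ and $\pi^*\chi$, plus the boundary contributions that the fiber\ndash integration Stokes theorem attaches to the faces of $C_3(M)$ where the integration point collides with one of the two external points; these boundary terms, evaluated through the normalization $\int_{\text{fibre}}\eta=1$, are exactly what reproduces the operator identity $dK+Kd=\id-\PP'$. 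Granting that this object is a smooth form on $C_2(M)$, the correspondences set up earlier give the conclusion directly: \textsf{P1, P2, P3} encode that $K$ is a symmetric chain homotopy satisfying \eqref{dK+Kd}, \textsf{P4} is $K\circ\iota=0$, and \textsf{P5} is the identity $K^2=0$ that the whole construction was designed to enforce. (The normalization \textsf{P2} may also be seen directly by integrating \textsf{P1} over $C_2(M)$, giving $\int_{\de C_2(M)}\hat\eta=\int_{C_2(M)}\pi^*\chi=-1$.)

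Two points deserve emphasis. First, the construction does \emph{not} preserve \textsf{P2'}: composing the propagators changes the restriction to $\de C_2(M)$, so afterwards one can only guarantee the integral normalization \textsf{P2} rather than the pointwise boundary value $-\eta$, which is precisely why the statement asks for \textsf{P2} and not \textsf{P2'}. Second, the genuine difficulty is analytic rather than algebraic: one must show that $\int_2\hat\eta_{0,12}\,\hat\eta_{0,23}$, and hence the corrected propagator, extends to a \emph{smooth} form on the compactification $C_2(M)$, i.e.\ that the contributions of the hidden faces of $C_3(M)$ (where the integration point approaches the diagonal) are regular and do not obstruct smoothness along $\de C_2(M)$. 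Controlling these faces, in the spirit of the Axelrod--Singer vanishing arguments and using the regularity of $\hat\eta_0$ on the diagonal built into Lemma~\ref{l:P123}, is the main obstacle; it is also the step that resists being strengthened so as to keep \textsf{P2'} for a general global angular form, which is the source of the difficulty alluded to in the introduction.
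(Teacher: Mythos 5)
Your proposal follows the same route as the paper's proof: start from the propagator of Lemma~\ref{l:P1234}, implement the side\ndash condition trick \eqref{K_3=KdK} at the level of integral kernels, and let the operator algebra (which you run essentially correctly, including $K^2=0$ via $\ddd\circ\iota=0$) guarantee \textsf{P5}. The substantive difference is \emph{which} form of the operator identity you realize as a kernel. You take the asymmetric form $K=\hat K-\hat K^2\ddd$, so your correction kernel is a partial differential $\ddd_1$ of the double convolution $\int_3\hat\eta_{13}\hat\eta_{32}$, produced by integrating by parts over the internal point; the paper instead uses the symmetric form $K_3=K_2+[\ddd,\ddd K_2^3]$, whose kernel correction is $\gamma_{12}=\ddd\ddd_1f_{12}$ with $f_{12}=\int_{3,4}\hat\eta_{13}\hat\eta_{34}\hat\eta_{42}$. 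This choice is not cosmetic: $\gamma$ is manifestly exact, so \textsf{P1} and \textsf{P2} hold immediately (the propagator changes by an exact term, whose fiber integral over the closed manifold $\de C_2(M)$ vanishes), and manifestly $T$\ndash odd, since $f$ is $T$\ndash even and $\gamma=\ddd_2\ddd_1 f$, so \textsf{P3} holds immediately; only \textsf{P4} requires a computation. With your asymmetric kernel neither closedness (needed for \textsf{P1}: the new propagator must satisfy $\ddd\hat\eta=\pi^*\chi$ with the \emph{same} $\chi$) nor $T$\ndash oddness is visible, and you are forced to route every property through the operator--kernel dictionary. That transfer can be justified --- two smooth forms on $C_2(M)$ inducing the same operator agree, both being determined on the dense interior --- but it is exactly the verification that the paper's symmetric choice renders unnecessary.

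Two of your claims are off, and the first matters for completing the argument. You locate the ``main obstacle'' in showing that $\int_2\hat\eta_{0,12}\hat\eta_{0,23}$ extends to a smooth form on $C_2(M)$. That part is automatic: the pushforward of a smooth form along the compactified projection $C_3(M)\to C_2(M)$ is smooth (the Axelrod--Singer fact on which the whole formalism rests; it is also why the paper's $f$ is unproblematically smooth). The genuine subtlety sits elsewhere: for a general smooth form on $C_2(M)$ the partial differential $\ddd_1$ is defined only on the interior and need not extend to the boundary (only the total $\ddd$ does), so what requires an argument is $\ddd_1$ applied to such a convolution --- equivalently, regularity near $\de C_2(M)$, i.e.\ being a pullback from $M\times M$ up to controlled terms. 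This is precisely the issue the paper isolates around Conjecture~\ref{conj}, and, to be fair, the paper's own use of $\ddd_1 f$ quietly relies on a weak form of it as well. Second, you assert that composing propagators ``changes the restriction to $\de C_2(M)$'', so that \textsf{P2'} fails. This is not known to be true: Conjecture~\ref{conj} is exactly the expectation that \textsf{P2'} \emph{does} survive the construction. The correct statement, and the reason the Lemma claims only \textsf{P2}, is that \textsf{P2'} can no longer be \emph{guaranteed}; asserting its failure is unwarranted. (Relatedly, the propagator of Lemma~\ref{l:P123} has no ``regularity on the diagonal built into'' it --- near $\de C_2(M)$ it is the non\ndash regular term $\rho\, p^*\eta$ --- so that phrase should be dropped too.)
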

\begin{proof}
We apply transformation \eqref{K_3=KdK} in the equivalent form
\[
K_3=K_2+[\ddd,\ddd K_2^3]
\]
to the propagator $\hat\eta$ constructed in Lemma~\ref{l:P1234}. In terms of integral kernels, the new propagator
is $\hat\eta+\gamma$ with $\gamma_{12}:=\ddd\ddd_1 f_{12}$ and
\[
f_{12}:=\int_{3,4}\hat\eta_{13}\hat\eta_{34}\hat\eta_{42}.
\]
Properties \textsf{P1} and \textsf{P2} are obviously satisfied as we have changed the propagator by an exact term.
As for property \textsf{P3}, observe that equivalently $\gamma$ may be written as $\ddd_2\ddd_1 f_{12}$ and that $f$ is even under the action of $T$.
Finally property \textsf{P4} is easily checked by integration.
\end{proof}
It would be very useful to prove the following
\begin{conj}\label{conj}
The propagator constructed in Lemma~\ref{l:P12345} also satisfies property \textsf{P2'}.
\end{conj}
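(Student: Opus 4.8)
The plan is to verify property~\textsf{P2'} for the corrected propagator $\hat\eta+\gamma$ of Lemma~\ref{l:P12345}, where $\gamma_{12}=\ddd\ddd_1 f_{12}$ and $f_{12}=\int_{3,4}\hat\eta_{13}\hat\eta_{34}\hat\eta_{42}$. Since the propagator of Lemma~\ref{l:P1234} already satisfies $\iota_\de^*\hat\eta=-\eta$, the conjecture is equivalent to $\iota_\de^*\gamma=0$. Now $f$ is the fibre integral along $C_4(M)\to C_2(M)$ (forgetting $x_3,x_4$) of the smooth form $\hat\eta_{13}\hat\eta_{34}\hat\eta_{42}$; as $\ddd_1$ differentiates only the base point $x_1$ it commutes with this pushforward, so $\ddd_1 f$ is a smooth form on $C_2(M)$, and since pullback commutes with $\ddd$ we get $\iota_\de^*\gamma=\ddd\,\iota_\de^*(\ddd_1 f)$. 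It therefore suffices to prove that the $1$\ndash form $\iota_\de^*(\ddd_1 f)$ on $STM=\de C_2(M)$ is closed.

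First I would remove its $T$\ndash even part: from $T^*f=f$ one has $\iota_\de^*(\ddd_1 f)+\iota_\de^*(\ddd_2 f)=\ddd(\iota_\de^* f)$, so the $T$\ndash even part is exact and disappears after applying $\ddd$, leaving
\[
\iota_\de^*\gamma=\ddd\beta,\qquad \beta:=\tfrac12\,\iota_\de^*(\ddd_1 f-\ddd_2 f),
\]
with $\beta$ a $T$\ndash odd $1$\ndash form. In blow\ndash up coordinates $(c,r,\theta)$ near the diagonal (midpoint $c\in M$, radius $r\ge0$, direction $\theta\in S^2$) the differentials $\ddd x_1^k$ and $\ddd x_2^k$ both restrict to $\ddd c^k$; moreover their sum $\partial_{x_1^k}f+\partial_{x_2^k}f=\partial_{c^k}f$ is smooth, so the would\ndash be $\ddd\theta$ components of $\beta$ cancel and $\beta=h_k\,\ddd c^k$ is \emph{horizontal}. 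Here $h_k$ is the boundary value of the relative derivative $\tfrac12(\partial_{x_1^k}-\partial_{x_2^k})f$, which is in turn determined by the normal derivative $g:=\partial_r(\iota_\de^* f)$. A direct computation then shows that $\ddd\beta=0$ is equivalent, given the $T$\ndash oddness of $\beta$, to $g=0$. Thus the whole conjecture reduces to the single statement that the $4$\ndash point integral $f$ has vanishing first normal derivative along the diagonal $\de C_2(M)$.

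Two features make this delicate. First, $\iota_\de^*\gamma=\ddd\beta$ is manifestly exact, so its de~Rham class is zero: no cohomological or integral argument---in particular \textsf{P2}, which only pins down a fibre integral---can detect the obstruction, and what must be shown is the pointwise vanishing of an exact form. Second, $g$ is computed by the Fulton--MacPherson--Axelrod--Singer analysis of the boundary of the fibre integral $f$: its value is governed by the codimension\ndash one faces of $C_4(M)$ lying over $\de C_2(M)$, and by the usual rescaling argument each relevant hidden face, at which $x_3$ and/or $x_4$ collide with the point $x_1=x_2$, contributes an integral over a configuration of points in $T_cM\cong\RR^3$ built from the odd global angular form $\eta$ (the leading behaviour of $\hat\eta$ on the diagonal, by \textsf{P2'}).

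The main obstacle is precisely this last evaluation: to show that the resulting universal combination of angular\ndash form integrals cancels. This is a two\ndash loop anomaly computation of exactly the type encountered in Axelrod--Singer and in the Bott--Taubes/Kontsevich theory of configuration\ndash space integrals, where such vanishing is established only through explicit evaluation or a special symmetry of the angular form. Since no soft argument is available here, and the value might a priori depend on the chosen $\eta$, this is the step we have been unable to carry out, which is why the statement is recorded only as Conjecture~\ref{conj}.
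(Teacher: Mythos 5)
The statement you are addressing is recorded in the paper as a \emph{conjecture}: the authors give no proof of it at all, only the remark that, since $\gamma=\ddd\ddd_1 f$ is $T$\ndash odd, it would suffice to show that $\gamma$ (equivalently $f$, or at least $\ddd f$) is \textsf{regular} on the diagonal, i.e.\ a pullback from $M\times M$ --- and they state explicitly that they have no complete proof of this regularity. Your proposal, to its credit, also does not claim a proof: it ends by conceding that the decisive step cannot be carried out. In that respect it is consistent with the actual status of the statement, and your closing diagnosis (no cohomological or integral argument can help, since what is required is the \emph{pointwise} vanishing of a manifestly exact form, and the obstruction sits in hidden\ndash face contributions built from the angular form $\eta$) matches the reason the authors left this as a conjecture rather than a lemma.

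There is, however, a concrete unjustified step early in your reduction, and it is not innocent: you assert that $\ddd_1 f$ is a smooth form on $C_2(M)$ because ``$\ddd_1$ differentiates only the base point $x_1$ and commutes with the pushforward''. The splitting $\ddd=\ddd_1+\ddd_2$ comes from the product structure of $M\times M$ and degenerates at the blown\ndash up diagonal. In your own coordinates $(c,r,\theta)$ one has $\de_{x_1^k}=\tfrac12\de_{c^k}+\theta^k\de_r+r^{-1}T_k$ (with $T_k$ the spherical derivative) and $\ddd x_1^k=\ddd c^k+\tfrac12\theta^k\ddd r+\tfrac{r}{2}\,\ddd\theta^k$, so $\ddd_1 f$ contains the term $r^{-1}(T_kf)\,\ddd c^k$, which extends to the boundary only if the restriction of $f$ to $\de C_2(M)=STM$ is constant along the $S^2$\ndash fibres. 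Worse, this singular term is $T$\ndash odd: it cancels in the even combination $\ddd_1f+\ddd_2f=\ddd f$ but \emph{doubles} in the odd combination $\ddd_1f-\ddd_2f$, so your $1$\ndash form $\beta=\tfrac12\,\iota_\de^*(\ddd_1 f-\ddd_2 f)$ is not even well defined unless $f\big|_{STM}$ is already fibrewise constant. That fibrewise constancy is precisely the leading part of the regularity of $f$ that the paper identifies as the missing ingredient. Consequently your reduction to ``vanishing of the normal derivative of $f$'' silently discards the angular dependence of $f$ at the diagonal, which is the central half of what must be proven; a correct reduction must control both, and then it coincides in substance with the paper's own remark. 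Both routes stop at the same unproven analytic fact about the three\ndash propagator convolution $f_{12}=\int_{3,4}\hat\eta_{13}\hat\eta_{34}\hat\eta_{42}$.
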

Observe that since $\gamma$ is $T$\ndash odd, it would suffice to show that it is regular on the diagonal (i.e., a pullback from $M\times M$).
For this it would suffice to show that $f$ or at least $\ddd f$ has this property. By its definition $f$ looks rather regular, but at the moment
we have no complete proof of this fact.

\begin{Rem}[The Riemannian propagator]\label{r:Riem}
The physicists' treatment of Chern--Simons theory would simply be to choose a Riemannian metric $g$ and use it to impose the Lorentz gauge-fixing.
Out of this one gets the propagator $\ddd^*\circ G$, where $G$ is the Green function for $\Delta+\mathcal{P}'$, where $\Delta$ is the Laplace operator and
$\mathcal{P}'$ is the projection to harmonic forms in the Hodge decomposition. The integral kernel of this propagator is a smooth two-form $\hat\eta$
on $C_2(M)$ satisfying properties \textsf{P1, P2', P3, P4, P5} with odd angular form on the boundary of the type described in
Remark~\ref{r:BC}. In this case, the metric connection is actually the Levi-Civita connection for the chosen Riemannian metric.
See \cite{AS} and \cite[Remark 3.6]{BC}.
\end{Rem}


\subsection{The construction of the invariant by a framed propagator}
We now fix the boundary value of the propagator as in Remark~\ref{r:K} for a given choice $f$ of framing. We also choose an embedding $\iota$
of cohomology and
denote by $P_{\iota,f}$ the space
of propagators satisfying properties \textsf{P1,P2',P3}. Observe that, by Lemma~\ref{l:P1234}, $P_{\iota,f}$ is not empty.
For $\hat\eta\in P_{\iota,f}$ we define $\mr{Ind}_{\iota,\hat\eta}\in \Fun(\FF')[[\hbar]]$, $\FF'=H^\bullet(M,\frg)[1]=H^\bullet(M)[1]\otimes\frg$,
analogously to $\mr{Ind}_{\iota,K}$ as at the beginning
of subsection~\ref{s:relax} by the following obvious changes of notations:
\begin{enumerate}
\item Every chain homotopy $K$ is replaced by a propagator $\hat\eta$;
\item every vertex is replaced by a point in the compactified configuration space over which we eventually integrate.
\end{enumerate}
Signs may be taken care of by choosing an ordering of vertices and of half edges at each vertex (see, e.g., \cite{BC}).

All computations in Section~\ref{s:toy} go through as they are simply replaced by Stokes theorem:
\[
\ddd\int_{C_n(M)} = (-1)^n\int_{C_n(M)}\ddd + \int_{\de C_n(M)}.
\]
Among the codimension\ndash one boundary components of $C_n(M)$ we distinguish between principal and hidden faces: the former correspond to the collapse of
exactly two points, the latter to the collapse of more than two points. Principal faces contribute by the same combinatorics as in Section~\ref{s:toy},
whereas hidden faces do not contribute by our choice of propagators because of Kontsevich's vanishing Lemmata \cite{K}.
As a result we conclude that $\mr{Ind}_{\iota,\hat\eta}$ satisfies the quantum master equation for every choice of induction data as above.
For more details, we refer to Appendix~\ref{app-details}.

By the same reasoning and by the arguments
of subsection~\ref{s:relax}, we may prove that $\mr{Ind}_{\iota,\hat\eta}$ is canonically equivalent to a strict effective action. This allows us to recover at least the two\ndash loop part of the complete invariant of a rational homology sphere
as discussed in subsection~\ref{invreleffact}.

\begin{Rem}
If Conjecture~\ref{conj} were true, the space $P_{\iota,f}'$ of propagators satisfying properties \textsf{P1,P2',P3,P4,P5} would not be empty.
We could then repeat the above construction using $P_{\iota,f}'$ instead of $P_{\iota,f}$ and get a strict effective action directly.
The discussions of subsections~\ref{sec 2.2}, \ref{sec 2.3} and \ref{sec 2.4} would then go through and, in particular,
Propositions~\ref{prop4}, \ref{prop5}, \ref{prop6}, \ref{prop7} would hold.
\end{Rem}

\begin{Rem}
In \cite{BC} an invariant for framed rational homology spheres was introduced. The boundary condition for the propagator was different (see next subsection),
but this is immaterial for the present discussion. Namely, choose a propagator in $P_{\iota,f}$ and define
$\tilde\eta_{123}:=\hat\eta_{12}+\hat\eta_{23}+\hat\eta_{31}$. If $M$ is a rational homology sphere, $\tilde\eta_{123}$ is closed.
Now take the graphs appearing in the constant part of the strict effective action and reinterpret them as follows:
\begin{enumerate}
\item Each vertex is replaced by a point in the compactified configuration space;
\item an extra point $x_0$ is added on which one puts the representative $v\in\iota(H^3(M))$ with $\int v=1$;
\item each chain homotopy is replaced by $\tilde\eta$ (more precisely, the chain homotopy between vertices $i$ and $j$ is replaced
by $\tilde\eta_{ij0}$).
\end{enumerate}
It is now possible to show that this produces an invariant of $(M,f)$. This is a different way of getting the invariant
corresponding to the constant part of the strict effective action
for a choice of propagator
not necessarily satisfying property \textsf{P5}. We do not have a direct proof that this invariant is the same. The indirect proof consists
of showing that both invariants are finite type with the same normalizations along the lines of \cite{KT}.

If Conjecture~\ref{conj} were true, then it would immediately follow that this invariant is exactly the constant part of the strict effective action.
In fact, for a propagator
as in the Conjecture, it is not difficult to show that only the term $\hat\eta_{ij}$ in each $\tilde\eta_{ij0}$ would contribute (and the integration
over $x_0$ would then decouple).
Since the induced action is constant
on $P_{\iota,f}$, by restriction to $P_{\iota,f}'\subset P_{\iota,f}$ we would prove the claim.
\end{Rem}

\subsection{The unframed propagator}\label{s-unframed}
Instead of using Kontsevich's propagator, one may proceed as in \cite{BC} and define the propagator by choosing the global angular
form on $\de C_2(M)$ as in Remark~\ref{r:BC}. Recall that in this case no choice of framing is required. On the other hand,
one needs to specify a Riemannian metric $g$ and a metric connection $\theta$. We denote by $P_{\iota,g,\theta}$ the space of propagators
corresponding to these choices.

We proceed exactly as in the previous subsection to define the effective action  (see Appendix~\ref{app-details}
for more details).
In particular we want to check independence on the induction data; so we choose a path
in $P_{\iota,g,\theta}$ and consider the effective action $W$ as a function on the shifted cohomology tensor the differential forms on $[0,1]$ and check
whether the extended QME $(\ddd+\hbar\Delta) W + \{W,W\}/2=0$ holds.
The only difference with respect to the previous subsection
is that there is an extra set of boundary components of the configuration
spaces that may appear: namely, the most degenerate faces corresponding to the collapse of all points. These faces may be treated exactly as in
\cite{AS,BC} and one shows that their contribution is a multiple of the first Pontryagin form $-\operatorname{tr}F_\theta^2/(8\pi^2)$, where $F_\theta$
is the curvature of $\theta$. The important point is that the coefficient depends only on the graph involved but not on the $3$\ndash manifold $M$.
As a result the effective action might not satisfy the extended quantum master equation. However, one may easily compensate for this by adding to it
the integral over $M$ of the Chern--Simons $3$\ndash form of the connection $\theta$ pulled back from $F(M)$ to $M$ by choosing a section $f$ (i.e., a framing).
The framing now appears because of this correction but is not present in the propagator.

The main disadvantage of this approach is that one does not know how to compute the universal coefficients. (It is known that the coefficients vanish
for graphs with an odd number of loops, while for the graph with two loops one may compute the coefficient explicitly and see that it is not zero.)
The advantage is that $P_{\iota,g,\theta}$ contains a subspace of propagators satisfying also property \textsf{P5}: These are the integral kernels
constructed in \cite{AS}, see Remark~\ref{r:Riem}.
With these choices, and the addition of the frame-dependent constant as in the previous paragraph, one gets an induced effective action
satisfying all properties stated in subsections~\ref{sec 2.2}, \ref{sec 2.3} and \ref{sec 2.4}.
More precisely, let
\[
\operatorname{CS}(M,\theta,f):= -\frac1{8\pi^2}\int_M f^*\operatorname{Tr}\left(\theta\,\ddd\theta+\frac23\theta^3\right)
\]
be the Chern--Simons integral for a connection $\theta$ on the frame bundle $F(M)$ of $M$ and a framing $f$ (regarded here as a section of $F(M)$).
\begin{thm}\label{thetheorem}
Let $M$ be a compact $3$\ndash manifold and $\frg$ a quadratic Lie algebra. Then
\begin{enumerate}
\item For every choice of Riemannian metric $g$ on $M$, the effective action $W$ constructed using the Riemannian propagator of Remark~\ref{r:Riem}
is a function on $H^\bullet(M,\frg)[1]$, solves the quantum master equation and has the properties described in Proposition~\ref{prop4}.
In addition it has the form given in \eqref{prop6 W} in case $B_1(M)=0$ and in \eqref{prop7 W} in case $M$ is formal.
\item There is a universal element $\phi\in\hbar^2\mathbb{R}[[\hbar^2]]$,
depending only on the choice of Lie algebra $\frg$,
such that
the modified effective action
\[
\widetilde W(M,g,f):=W(M,g)+\phi \operatorname{CS}(M,\theta_g,f),
\]
where $\theta_g$ is the Levi-Civita connection for $g$, solves the QME and is independent of $g$ modulo canonical transformations as in Proposition~\ref{prop5}.
In particular we get invariants for the framed $3$\ndash manifold $(M,f)$ as in Propositions~\ref{prop6} and \ref{prop7}.
\end{enumerate}

\end{thm}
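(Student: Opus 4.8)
The plan is to transport the entire finite-dimensional machinery of Section~\ref{s:toy} to the de~Rham Frobenius algebra $\Omega^\bullet(M)$, using the propagator $\hat\eta$ in place of the chain homotopy $K$ and Stokes' theorem on the compactified configuration spaces $C_n(M)$ in place of the splitting $\Delta=\Delta'+\Delta''$. For part~(1), the first step is to define $\mr{Ind}_{\iota,\hat\eta}$ by the Feynman-graph expansion of \eqref{Ind}, where internal edges are decorated by $\hat\eta_{ij}$, vertices by points integrated over $C_n(M)$, and leaves by $\iota$-representatives of cohomology. I would then verify the QME by differentiating each graph weight via the Stokes formula $\ddd\int_{C_n(M)}=(-1)^n\int_{C_n(M)}\ddd+\int_{\de C_n(M)}$. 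The bulk term reproduces $\{W,W\}/2$ through properties \textsf{P1} and the Leibniz/Jacobi identities exactly as in Proposition~\ref{prop3}; principal faces reproduce the same combinatorics as the finite-dimensional proof; and one must invoke Kontsevich's vanishing lemmata to kill the hidden faces. Here the Riemannian propagator of Remark~\ref{r:Riem} is used, which satisfies \textsf{P1,P2',P3,P4,P5}, so that $K\circ\iota=0$ and $K^2=0$ hold at the level of integral kernels and the ansätze of Propositions~\ref{prop4} and \ref{prop6}, \ref{prop7} go through verbatim.

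For part~(2), the task is metric-independence modulo canonical transformations. I would choose a path in the space of metrics (equivalently in $P_{\iota,g,\theta}$), form the extended action $W$ valued in $\Fun(\FF')[[\hbar]]\otimes\Omega^\bullet([0,1])$, and test the extended QME $(\ddd+\hbar\Delta)W+\{W,W\}/2=0$ as in Proposition~\ref{prop8}. The new phenomenon compared with subsection~\ref{s:relax} is the appearance of the most degenerate face of $C_n(M)$, where all points collapse. Following \cite{AS,BC}, the contribution of this face is a universal multiple (depending only on the graph, not on $M$) of the first Pontryagin form $-\tr F_\theta^2/(8\pi^2)$, assembling into $\phi\in\hbar^2\RR[[\hbar^2]]$. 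The extended QME therefore fails by exactly the variation of $\phi\,\mr{CS}(M,\theta_g,f)$, since $\ddd\,\mr{CS}=\int_M \tr F_\theta^2/(8\pi^2)$ transgresses the Pontryagin form; adding this frame-dependent counterterm gives $\widetilde W$ satisfying the QME and, along the path, changing only by the canonical transformations of Proposition~\ref{prop5}. The universality of the coefficient, independent of $M$, is what makes the single correction $\phi$ work simultaneously for all graphs.

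I would then conclude by specializing: in the case $B_1(M)=0$ the ansatz \eqref{prop6 W} applies and $\widetilde W$ reduces to $W^{003}_{\mr{prod}}$ plus a constant invariant of $(M,f)$; in the formal case \eqref{prop7 W} applies and one extracts the one-loop invariant $F^{(1)}|_{\mr{MC}}$ via Proposition~\ref{prop7}. The main obstacle is not the combinatorics, which is purely a bookkeeping translation of Section~\ref{s:toy}, but the analytic control of the boundary contributions: establishing that hidden faces vanish (Kontsevich's lemmata) and, above all, that the all-points-collapse face yields a genuinely $M$-independent universal constant proportional to the Pontryagin form. This last point is where the framing enters and where the computability of $\phi$ is lost, precisely because the universal coefficient is hard to evaluate even though its existence and its proportionality to $\tr F_\theta^2$ are guaranteed by the argument of \cite{AS,BC}.
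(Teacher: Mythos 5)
Your proposal is correct and follows essentially the same route as the paper: part (1) by transporting the finite-dimensional combinatorics to configuration-space integrals via Stokes' theorem, with principal faces reproducing the toy-model algebra and hidden faces killed by dimensional arguments and Kontsevich's vanishing lemmata (properties \textsf{P1}--\textsf{P5} of the Riemannian propagator making the effective action strict), and part (2) by running the extended QME over a path of metrics, identifying the anomaly from the all-points-collapse faces as a graph-dependent but $M$\ndash independent multiple of the Pontryagin form, and cancelling it with the frame-dependent Chern--Simons counterterm $\phi\operatorname{CS}(M,\theta_g,f)$, exactly as in the paper's subsection on the unframed propagator and Appendix~\ref{app-details}. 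The only details you elide --- the explicit construction of the $\ddd t$\ndash components $\lambda$ and $\xi^\mu$ of the extended propagator and basis (which produce the generator $\zeta$ of the canonical transformation), and the evenness of $\phi$ in $\hbar$ --- are treated at a comparable level of brevity in the paper itself.
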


\begin{Rem}
The leading contribution
to $\phi$ may be explicitly computed and yields
\[ \phi=C_2(\frg)\hbar^2/48 + O(\hbar^4)
\]
with $C_2(\frg)=f_{abc}f_{abc}$, where the $f_{abc}$ are the structure constants of $\frg$ in
an orthonormal basis. It is not known whether there are nonvanishing higher order corrections.
\end{Rem}

\appendix

\newcommand{\DDD}{\operatorname{{D}}}
\section{The Chern--Simons manifold invariant}\label{app-details}
In this Appendix we give more details on the construction outlined in subsection~\ref{s-unframed}.
To a graph $\Gamma$ with $|\Gamma|$ vertices we associate an element $\omega_\Gamma$
of $\Omega^\bullet(C_{|\Gamma|}(M))\otimes \Fun(\FF')$ as follows:\footnote{Here and in the rest of the Appendix, the symbol $\otimes$ is understood as the
completed tensor product: i.e., the space of  functions on the Cartesian product of the corresponding supermanifolds.}
\begin{itemize}
\item to each edge we associate the pullback of a propagator by the corresponding projection from $C_{|\Gamma|}(M)$ to $C_2(M)$;
\item to each leaf we associate the pullback (by the corresponding projection from $C_{|\Gamma|}(M)$ to $M$) of $\sum z_\mu^a\gamma^\mu e_a$,
where $\{\gamma^\mu\}$ is the chosen
basis of $H^\bullet(M)$, $\{e_a\}$ is an orthonormal basis of $\frg$, and the $\{z^a_\mu\}$s
are the corresponding coordinate functions on $\FF'$;
\item on each vertex we put a structure constant in the orthonormal basis chosen above.
\end{itemize}
Then we take the wedge product of the differential forms and sum over Lie algebra indices for each edge.
The result does not depend on the choice of orthonormal basis for $\frg$ but
depends on a choice of numbering of the vertices and of orientation of the edges.
If we however make the same choice also to orient
$C_{|\Gamma|}(M)$, then
\[
\int_{C_{|\Gamma|}(M)}\omega_\Gamma\in  \Fun(\FF')
\]
is well defined.
We define $Z$ (the exponential of $W$) to be the sum of $\omega_{|\Gamma|}/|\operatorname{aut}\Gamma|$ over all
trivalent graphs, where $|\operatorname{aut}\Gamma|$ is the order or the group of automorphisms of $\Gamma$.
Proving the QME for $W$ is equivalent to proving $\Delta Z=0$.

The main observation is that Property~\textsf{P1}
of the propagator and the same combinatorics as in the toy model
imply that $\Delta Z$ is obtained by replacing the $\omega_\Gamma$s by $\ddd\omega_\Gamma$s one by one.
We then use Stokes theorem. The contributions of principal faces
(i.e., boundary faces of configuration spaces corresponding to the collapse of two vertices)
sum up to zero thanks 
to the Lie algebra contributions (this is also the same combinatorics as in the toy model).
Hidden faces (i.e., the other boundary faces) may in principle contribute. Let $\gamma$ be the subgraph corresponding
to a hidden face (i.e., the vertices of $\gamma$ are those that collapse and its edges are the edges between such vertices).
By simple dimensional reasons, the hidden faces corresponding to $\gamma$ vanishes if $\gamma$ has a univalent vertex;
if $\gamma$ has a bivalent vertex, its contribution also vanishes by Kontsevich's lemma thanks to Property~\textsf{P3}.
Since we only consider trivalent graphs, we are left with contributions coming from the collapse of \emph{all}\/ vertices
of a connected component of $\Gamma$s with \emph{no leaves}.
These are the graphs that contribute to the constant part of $Z$ and thus of $W$.
The latter contributions also vanish by a simple dimensional argument.

More generally, to keep track of the choices
involved in the propagator, we consider a one parameter family of choices with parameter $t\in I:=[0,1]$
and show $\Delta' \Tilde Z=0$ with $\Delta':=\Delta + \ddd t\,\frac\ddd{\ddd t}$ and
$\Tilde Z\in Fun(\FF')\otimes\Omega^\bullet(I)$ constructed as follows.
Let $\hat\eta$ be a one-parameter family of propagators regarded as an element of
$\Omega^2(C_2(M)\times I)$ related, at every $t\in I$, by Property~\textsf{P1} to the one parameter family $\{\gamma^\mu\}$
of bases of  $\Omega^\bullet(M)$.
Let $\dot\eta\in\Omega^2(C_2(M)\times I)$ and $\dot\gamma^\mu\in\Omega^\bullet(M\times I)$
be their $t$\ndash derivatives.
We may assume $\int_M \gamma^\mu\dot\gamma^\nu=0$, $\forall \mu,\nu$, for more
general choices may be compensated by a linear transformation of $H^\bullet(M)$.
Observe that by Property~\textsf{P2'}, the restriction of $\hat\eta$ to the boundary is fixed, so the restriction
of $\dot\eta$ vanishes. Actually, by construction $\dot\eta$ vanishes in a whole neighborhood of the boundary,
so it is a regular form. Let
\[
\lambda_{13}:=\frac{\int_2\Hat\eta_{12}\,\dot\eta_{23}-\dot\eta_{12}\,\Hat\eta_{23}}2,
\]
which is regular by Lemma~\ref{l-reg} in Appendix~\ref{app-reg}. Also set $\xi^\mu_1:=(\int\Hat\eta_{12}\dot\gamma^\mu_2)$.
We define
\begin{align*}
\Tilde\eta&:= \Hat\eta-\lambda\,\ddd t,\\
\Tilde\gamma^\mu&:=\gamma^\mu+\xi^\mu\,\ddd t,\\
\Tilde\chi &:= g_{\mu\nu}\Tilde\gamma^\mu\,\Tilde\gamma^\nu = \chi + O(\ddd t),
\end{align*}
where $g_{\mu\nu}$ is the metric on $H^\bullet(M)$ in the given basis.
A simple computation then shows $\DDD\Tilde\eta=\Tilde\chi$ and $\DDD\Tilde\gamma^\mu=0$
with $\DDD:=\ddd+\ddd t\frac\ddd{\ddd t}$.
Finally, we define $\Tilde Z$ as above by using $\Tilde\eta$ and $\Tilde\gamma$
instead of $\Hat\eta$ and $\gamma$, respectively. We now observe that applying $\Delta'$ to $\Tilde Z$ is the same
as applying $\DDD$ to the propagators. Reasoning as above by Stokes theorem, we see that the only possible
nonvanishing contributions come from hidden faces corresponding to the collapse of all vertices of a connected component
with \emph{no leaves}.\footnote{Observe that, since $\lambda$ is regular, only $\eta$ will appear in the boundary computations.}
These contributions also vanish if one uses a framed propagator, whereas the choice of
an unframed propagator yields some constant times the integral of the Pontryagin form on $M$ as a one-form on $I$,
see \cite{BC}.

By writing $\Tilde Z= Z + \zeta\ddd t$, we may see that the equivalences are produced by $\zeta$. These are very particular
kinds of BV equivalences as $\zeta$ consists of graphs decorated by propagators and generators of cohomology classes with the
exception of one edge that is decorated by $\lambda$ or one leaf that is decorated by $\xi$.

If Property \textsf{P5} holds, the computation of $\Tilde Z$ simplifies drastically. By construction we
obtain $\int_2\Hat\eta_{12}\lambda_{23}=\int_2\lambda_{12}\Hat\eta_{23}=0$ and $\int_2\eta_{12}\xi^\mu_2=0$.
As a result, whenever a vertex is decorated by $1\in\Omega^0(M)$ the corresponding integral vanishes.
Thus, $\Tilde Z$, as a function on $H^\bullet(M,\frg)[1]$ is independent of the coordinates in degree $1$ apart from
the trivial classical term. Since $Z$ is of degree zero, it will only depend on the coordinates of degree zero.
Since $\zeta$ is of degree $-1$, it will be linear in the coordinates of degree $-1$ with coefficients depending
on the coordinates of degree zero.

\section{Regular forms}\label{app-reg}
Let $\varpi\colon C_n(M)\to M^n$ be the extension to the compactification of the inclusion $C_n(M)^0\to M^n$.
We call a form on $C_n(M)$ \textsf{regular} if it is a pullback by $\varpi$.
Recall the maps $\pi_i$s and $\pi_{ij}$s defined in Notation~\fullref{not-pi}.


\begin{Lem}\label{l-reg}
Let $\alpha$ and $\beta$ be differential forms on $C_2(M)$ and let at least one of them be regular.
Then their convolution $\alpha*\beta:=\int_2\alpha_{12}\,\beta_{23}:=\pi_{13,*}(\pi_{12}^*\alpha\,\pi_{23}^*\beta)$
is regular.
\end{Lem}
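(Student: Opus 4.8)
The plan is to reduce to the case that $\alpha$ is \textsf{regular} and then to show that the convolution extends to a smooth form on all of $M\times M$; since by definition a \textsf{regular} form is exactly a pullback along $\varpi$, this is precisely what has to be established. The guiding picture is that a \textsf{regular} form is the same datum as a smooth integral kernel on $M\times M$, whereas $\beta$, being a smooth form on the compactification $C_2(M)$, is a kernel that is smooth off the diagonal with only a controlled angular singularity along it. Heuristically the singular support of the kernel of $\alpha*\beta$ is contained in $\{(x_1,x_3):\exists\,x_2,\ (x_1,x_2)\in\mathrm{ss}(\alpha)\text{ and }(x_2,x_3)\in\mathrm{ss}(\beta)\}$, and since $\alpha$ is globally smooth its singular support is empty, forcing the composition to be smooth, just as smoothing operators form a two\ndash sided ideal. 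The task is to turn this heuristic into a genuine fiber\ndash integration estimate.

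First I would reduce to $\alpha$ \textsf{regular}: the case of $\beta$ \textsf{regular} follows by the symmetric argument, exchanging the labels $1\leftrightarrow3$ and the roles of $\alpha,\beta$ via the involution $T$ (using $\pi_{ji}^*=\pi_{ij}^*\,T^*$), so the two hypotheses are interchangeable. Writing $\alpha=\varpi^*\tilde\alpha$ with $\tilde\alpha\in\Omega^\bullet(M\times M)$, everything is already defined on $C_3(M)$ (Notation~\ref{not-pi}): then $\pi_{12}^*\alpha$ is the pullback of $\tilde\alpha$ along the smooth map $C_3(M)\to M\times M$, $(x_1,x_2,x_3)\mapsto(x_1,x_2)$, hence is globally smooth, whereas $\pi_{23}^*\beta$ is a smooth form on $C_3(M)$ whose boundary behavior is concentrated on the faces where $x_2$ and $x_3$ collide. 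Since the forgetful map $\pi_{13}\colon C_3(M)\to C_2(M)$ is a fibration with compact fiber\ndash with\ndash corners, $\alpha*\beta=\pi_{13,*}(\pi_{12}^*\alpha\,\pi_{23}^*\beta)$ is automatically a smooth form on $C_2(M)$; the real content is that it descends to $M\times M$, i.e.\ extends smoothly across the diagonal $x_1=x_3$.

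To see this I would cut $\beta=\beta_{\mathrm{near}}+\beta_{\mathrm{far}}$ with a $T$\ndash symmetric cutoff supported near $\{x_2=x_3\}$. The $\beta_{\mathrm{far}}$ contribution has a jointly smooth integrand over a compact fiber and so gives a form manifestly smooth in $(x_1,x_3)$. For $\beta_{\mathrm{near}}$ the only singular locus of the integrand is $\{x_2=x_3\}$, which does not involve $x_1$; since $\tilde\alpha(x_1,x_2)$ is smooth, I would Taylor expand it about $x_2=x_3$, so that its leading term $\tilde\alpha(x_1,x_3)$ factors out of the fiber integral and multiplies a purely $\beta$\ndash dependent fiberwise angular integral that is smooth in $x_3$, while the higher Taylor terms carry extra powers of the collision distance and integrate to smooth forms as well. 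This exhibits $\alpha*\beta$ as a smooth form on $M\times M$, that is, as \textsf{regular}.

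The main obstacle is exactly the triple collision $x_1=x_2=x_3$ (equivalently, smoothness across $x_1=x_3$ after integrating out $x_2$), and this is where \textsf{regularity} of $\alpha$ is indispensable. Because $\tilde\alpha$ is smooth along $x_1=x_2$, in the pinch region it is essentially constant in the fast variables and factors out, leaving an integral of $\beta$ alone over the collapsing configuration, so that no dependence on the blow\ndash up direction $u=(x_1-x_3)/|x_1-x_3|$ survives; this is what makes the boundary value a pullback from $M$ rather than a genuine form on the exceptional sphere $STM$. Had $\alpha$ been singular along $x_1=x_2$ as well (like a propagator), the collision integral would retain the relative directions of the three colliding points and produce a nontrivial form on $STM$, i.e.\ a non\ndash\textsf{regular} boundary value. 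Carrying out this fiber integration uniformly near the triple face, and verifying that the residual angular integral depends smoothly on $x_3$, is the one step requiring genuine care.
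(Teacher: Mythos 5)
Your reduction to the case where $\alpha$ is regular is fine, and you have correctly located both the difficulty (smoothness of the pushed-forward form near the locus where all three points collide) and the reason regularity of $\alpha$ should resolve it. But what you have written is a plan rather than a proof: the decisive step --- that the cutoff/Taylor scheme really yields a form smooth on $M\times M$ to all orders --- is exactly the step you defer at the end (``the one step requiring genuine care''), and the intermediate claims it rests on are asserted, not proved. In particular, you claim that the fibre integrals $\int_2\beta_{\mathrm{near}}$ (and their variants weighted by powers of the collision distance) are ``smooth in $x_3$''. This is not automatic: differentiating under the integral sign in $x_3$ worsens the singularity of $\beta$ at $x_2=x_3$ by one power per derivative, so the naive argument gives only finite regularity at each Taylor order. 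The honest justification is that $\pi_2\colon C_2(M)\to M$ is a smooth fibre bundle with compact fibre (the blow-up of $M$ at a point) and that pushforward of smooth forms along such a bundle is smooth --- and once you invoke that fact, the entire cutoff/Taylor apparatus becomes superfluous.

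Indeed, your own observation that ``the only singular locus of the integrand is $\{x_2=x_3\}$, which does not involve $x_1$'' finishes the proof in one line if pushed slightly further, and this is the paper's argument. Since $\alpha=\varpi^*\alpha'$ with $\alpha'\in\Omega^\bullet(M\times M)$, the integrand lives naturally on $M\times C_2(M)$ (no $C_3(M)$, hence no triple face at all): it is the smooth form $(\mathrm{pr}_1\times\pi_1)^*\alpha'\,\mathrm{pr}_2^*\beta$, where $\mathrm{pr}_1,\mathrm{pr}_2$ are the projections of $M\times C_2(M)$ onto its two factors. The convolution is then the pushforward of this smooth form along $\mathrm{pr}_1\times\pi_2\colon M\times C_2(M)\to M\times M$, which is a smooth fibre bundle with compact fibres, so
\[
\gamma:=(\mathrm{pr}_1\times\pi_2)_*\bigl((\mathrm{pr}_1\times\pi_1)^*\alpha'\,\mathrm{pr}_2^*\beta\bigr)
\]
is a smooth form on $M\times M$; and $\alpha*\beta=\varpi^*\gamma$ because both are smooth forms on $C_2(M)$ given by the same convergent integral on the dense open subset $C_2^0(M)$. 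Your detour through $C_3(M)$, the near/far splitting, and the Taylor expansion all address a difficulty that disappears once the fibre integration is set up over the right space.
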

\newcommand{\pr}{\operatorname{{pr}}}
\begin{proof}
Suppose that, e.g., $\alpha$ is regular; i.e., $\alpha=\varpi^*\alpha'$, $\alpha'\in\Omega^\bullet(M\times M)$.
Define
\[
\gamma = (\pr_1\times\pi_2)_*\left(
(\pr_1\times\pi_1)^*\alpha'\,\pr_2^*\beta
\right)\in\Omega^\bullet(M\times M),
\]
where $\pr_1$ and $\pr_2$ are the projections from $M\times C_2(M)$ to the two factors.
It then follows that $\alpha*\beta=\varpi^*\gamma$.
\end{proof}


\newcommand\qq{\rm}
\newcommand\cmp[1]{{\qq Commun.\ Math.\ Phys.\ \bf #1}}
\newcommand\jmp[1]{{\qq J.\ Math.\ Phys.\ \bf #1}}
\newcommand\pl[1]{{\qq Phys.\ Lett.\ \bf #1}}
\newcommand\np[1]{{\qq Nucl.\ Phys.\ \bf #1}}
\newcommand\mpl[1]{{\qq Mod.\ Phys.\ Lett.\ \bf #1}}
\newcommand\prl[1]{{\qq Phys.\ Rev.\ Lett.\ \bf #1}}
\newcommand\plb[1]{{\qq Phys.\ Lett.\ \bf B #1}}
\newcommand\npb[1]{{\qq Nucl.\ Phys.\ \bf B #1}}
\newcommand\lmp[1]{{\qq Lett.\ Math.\ Phys.\ \bf #1}}
\newcommand\jsp[1]{{\qq J. Stat.\ Phys.\ \bf #1}}
\newcommand\ijmp[1]{{\qq Int.\ J.\ Mod.\ Phys.\ \bf #1}}
\newcommand\ijm[1]{{\qq Int.\ J.\ Math.\ \bf #1}}
\newcommand\cqg[1]{{\qq Class.\ Quant.\ Grav.\ \bf #1}}
\newcommand\prept[1]{{\qq Phys.\ Rept.\ \bf #1}}
\newcommand\tmp[1]{{\qq Theor.\ Math.\ Phys.\ \bf #1}}
\newcommand\anp[1]{{\qq Ann.\ Phys.\ \bf #1}}
\newcommand\anm[1]{{\qq Ann.\ Math.\ \bf #1}}
\newcommand\man[1]{{\qq Math.\ Ann.\bf #1}}
\newcommand\inm[1]{{\qq Invent.\ Math.\ \bf #1}}
\newcommand\adm[1]{{\qq Adv.\ Math.\ \bf #1}}
\newcommand\asm[1]{{\qq Adv.\ Sov.\ Math.\ \bf #1}}
\newcommand\rms[1]{{\qq Russ.\ Math.\ Surveys \bf #1}}
\newcommand\jp[1]{{\qq J.\ Phys.\ \bf #1}}
\newcommand\bAMS[1]{{\qq Bull.\ Amer.\ Math.\ Soc.\ \bf #1}}
\newcommand\tAMS[1]{{\qq Trans.\ Amer.\ Math.\ Soc.\ \bf #1}}
\newcommand\jdg[1]{{\qq J.\ Diff.\ Geom.\ \bf #1}}
\newcommand\conm[1]{{\qq Cont.\ Math.\ \bf #1}}
\newcommand\Top[1]{{\qq Topology \bf #1}}
\newcommand\mpcps[1]{{\qq Math.\ Proc.\ Camb.\ Phil.\ Soc.\ \bf #1}}
\newcommand\jkt[1]{{\qq J.\ of Knot Theory and Its Ramifications \bf #1}}
\newcommand\selma[1]{{\qq Selecta Math.\ \bf #1}}
\newcommand\phs[1]{{\qq Physica Scripta \bf #1}}
\newcommand\gt[1]{{\qq Geometry \& Topology \bf #1}}
\newcommand\agt[1]{{\qq Algebr.\ Geom.\ Topol.\ \bf #1}}
\newcommand\dmj[1]{{\qq Duke Math.\ J. \bf #1}}
\newcommand\jgp[1]{{\qq J. Geom.\ Phys.\ \bf #1}}

\thebibliography{99}

\bibitem{A} D. H. Adams, ``A note on the Faddeev--Popov determinant and Chern--Simons perturbation theory,''
\lmp{42}  (1997), 205--214.

\bibitem{AS} S. Axelrod and I. M. Singer, ``Chern--Simons perturbation
theory,'' in {\em Proceedings of the XXth DGM Conference}, ed.\
S.~Catto and A.~Rocha (World Scientific, Singapore, 1992),
3--45; ``Chern--Simons perturbation theory.~II,'' \jdg{39} (1994), 173--213.

\bibitem{BC} R. Bott and A. S. Cattaneo, ``Integral invariants of 3-manifolds,'' \jdg{48} (1998), 91--133.

\bibitem{BT} R. Bott and C. Taubes, ``On the self-linking of knots,''
\jmp{35} (1994), 5247--5287.

\bibitem{C} A. S. Cattaneo,
``Configuration space integrals and invariants for 3-manifolds and knots,'' in {\em Low Dimensional Topology},
ed.\ H.~Nencka, \conm{233} (1999), 153--165.

\bibitem{CF} A. S. Cattaneo and G. Felder, ``Effective Batalin--Vilkovisky theories, equivariant configuration spaces and cyclic chains,''
\texttt{math/0802.1706}

\bibitem{Co} K. J. Costello,
``Renormalisation and the Batalin--Vilkovisky formalism,''
\texttt{math.QA/0706.1533}

\bibitem{FM} W. Fulton and R. MacPherson, ``A compactification of configuration spaces,'' \anm{139} (1994), 183--225.

\bibitem{GL} V. K. A. M. Gugenheim and L. A. Lambe, ``Perturbation theory in differential homological algebra I,'' Illinois J. Math. \textbf{33} 4 (1989), 566--582

\bibitem{I} V. Iacovino, ``Master equation and perturbative Chern--Simons theory,'' \texttt{math/0811.2181}

\bibitem{K} M. Kontsevich, ``Feynman diagrams and low-dimensional topology,''
First European Congress of Mathematics, Paris 1992, Volume II,
{\em Progress in Mathematics} {\bf 120} (Birkh\"auser, 1994), 97--121.

\bibitem{KS} M. Kontsevich and Y. Soibelman, ``Homological mirror symmetry and torus fibrations,'' in
Symplectic geometry and mirror symmetry,  Seoul 2000, (World Sci.\ Publ., River Edge, NJ, 2001), 203--263;
\texttt{math/0011041}

\bibitem{KT} G. Kuperberg and D. P. Thurston,
``Perturbative 3-manifold invariants by cut-and-paste topology,''
\texttt{math/9912167}

\bibitem{L} A. Losev, ``BV formalism and quantum homotopical structures,'' Lectures at GAP3, Perugia, 2006.

\bibitem{M0} P. Mnev, ``Notes on simplicial $BF$ theory,''
Moscow Math. J. {\bf 9} 2 (2009), 371--410; \texttt{hep-th/0610326}

\bibitem{M} P. Mnev, ``Discrete $BF$ Theory,''
\texttt{hep-th/0809.1160}

\bibitem{S} A. Schwarz, ``A-model and generalized Chern--Simons theory,''
\plb{620}  (2005), 180--186.

\bibitem{W} E. Witten,
``Quantum field theory and the Jones polynomial,''
\cmp{121} (1989), 351--399.

\end{document}